\theoremstyle{plain}
\newtheorem{thm}{Theorem}[section]
\newtheorem{lemma}[thm]{Lemma}
\newtheorem{prop}[thm]{Proposition}
\newtheorem{defn}[thm]{Definition}
\newtheorem{claim}[thm]{Claim}
\newtheorem{conjecture}[thm]{Conjecture}
\newtheorem{theoremintro}{Theorem}
\newtheorem{questionintro}{Question}
\newtheorem{corintro}{Corollary}
\theoremstyle{definition}
\newtheorem{remark}[thm]{Remark}
\newtheorem{example}[thm]{Example}
\DeclareMathOperator{\im}{im}
\DeclareMathOperator{\Pic}{Pic}
\newcommand{\g}[2]{g^{#1}_{#2}}
\newcommand{\BN}[3]{\mathcal{M}^{#2}_{#1,#3}}
\newcommand{\KK}[3]{\mathcal{K}^{#2}_{#1,#3}}
\newcommand{\abs}[1]{|#1|}
\newcommand{\floor}[1]{\left\lfloor #1 \right\rfloor}
\newcommand{\ceil}[1]{\left\lceil #1 \right\rceil}
\newcommand{\dmax}{d_{max}}
\DeclareMathOperator{\ext}{\mathscr{E}\!\mathit{xt}}
\newcommand{\sheaf}[1]{\mathscr{#1}}
\def\A{\sheaf{A}}
\def\B{\sheaf{B}}
\def\E{\sheaf{E}}
\def\F{\sheaf{F}}
\def\G{\sheaf{G}}
\def\I{\sheaf{I}}
\def\L{\sheaf{L}}
\def\O{\sheaf{O}}
\def\T{\sheaf{T}}
\def\coker{\operatorname{coker}}
\def\min{\operatorname{min}}
\def\max{\operatorname{max}}
\def\length{\operatorname{length}}
\def\c1{\operatorname{c_1}}
\def\c2{\operatorname{c_2}}
\def\rk{\operatorname{rk}}
\def\c{\mathfrak{P}}
\def\CC{{\mathbb C}}
\def\ZZ{{\mathbb Z}}
\def\PP{{\mathbb P}}
\def\M{{\mathcal M}}
\def\K{{\mathcal K}}
\def\K{{\mathcal K}}
\def\P{{\mathcal P}}
\def\cong{\simeq}
\def\+{\oplus}                   
\def\*{\otimes}                  
\def\Pic{\operatorname{Pic}}
\begin{document}  
\thispagestyle{empty}
\vspace*{-.5cm}
	
\title{Distinguishing Brill--Noether loci}
	
\author[A.~Auel]{Asher Auel}
\address{A.~Auel, Department of Mathematics\\%
		Dartmouth College\\%
		Kemeny Hall\\%
		Hanover, NH 03755}
\email{asher.auel@dartmouth.edu}
	
\author[R.~Haburcak]{Richard Haburcak}
\address{R.~Haburcak, Department of Mathematics\\%
		Dartmouth College\\%
		Kemeny Hall\\%
		Hanover, NH 03755}
\address{Max-Planck-Institut für Mathematik\\%
		Vivatsgasse 7\\%
		53111 Bonn\\%
		Germany}
\email{richard.haburcak@dartmouth.edu}

\author[A.~L.~Knutsen]{Andreas Leopold Knutsen}
\address{A.~L.~Knutsen, Department of Mathematics, University of Bergen,
		Postboks 7800,
		5020 Bergen, Norway}
	\email{andreas.knutsen@math.uib.no}

\begin{abstract}
We construct curves carrying certain special linear series and not
others, showing many non-containments between Brill--Noether loci in
the moduli space of curves. In particular, we prove the Maximal Brill--Noether Loci conjecture in
full generality.  
\vspace{-3em}
\end{abstract}

\maketitle
	
\section*{Introduction} 
\label{sec:intro}
	
Whereas classical Brill--Noether theory studies linear systems on
general algebraic curves, \emph{refined Brill--Noether theory} aims to
characterize linear systems on special curves. A degree $d$ linear
system of dimension $r$, called a $\g{r}{d}$, corresponds to a
non-degenerate morphism $C\to \mathbb{P}^r$ of degree $d$ when it is base point free. The
Brill--Noether--Petri theorem \cite{Gies_BN,griffiths_harris,Laz}
states that a general smooth curve $C$ of genus $g$ admits a
$\g{r}{d}$ if and only if the \emph{Brill--Noether number}
\[
\rho(g,r,d) = g - (r+1)(g-d+r)
\]
is non-negative. The last few years have seen a flurry of results
concerning refined Brill--Noether theory of curves in a fixed
\emph{Brill--Noether locus}
\[
\BN{g}{r}{d} = \{C\in \M_g ~:~ C \text{ admits a }\g{r}{d}\}
\] 
when $\rho(g,r,d)<0$. In particular, major advances in refined
Brill--Noether theory for curves of fixed gonality, i.e., when $r=1$,
have been made in
\cite{cook-powell_jensen,jensen_ranganathan,larson_refined_BN_Hurwitz,larson_larson_vogt_2020global,pflueger}.
	
Part of a refined Brill--Noether theory concerns the question of
whether a ``general'' curve with a $g^r_d$ carries another
$g^{r'}_{d'}$, which can be reinterpreted in terms of containments of
Brill--Noether loci.  For example, Clifford's theorem implies that
$\BN{g}{r}{2r} \subset \BN{g}{1}{2}$ for every $r \geq 1$.  By adding
base points and 
subtracting non-base points, one obtains the \emph{trivial
containments} $\M^r_{g,d} \subset \M^r_{g,d+1}$ and $\M^r_{g,d}
\subset \M^{r-1}_{g,d-1}$ between Brill--Noether loci. The
\emph{expected maximal} Brill--Noether loci are those that do not
admit further trivial containments, see
Definition~\ref{defn:expected_maximal} for the precise definition.
	
The interaction between various Brill--Noether loci is useful in the study
of the birational geometry of $\M_g$, see~\cite{Farkas2000,harris_mumford}. 
When $\rho=-1$, the
Brill--Noether loci are irreducible divisors, which
have been studied by Harris, Mumford, Eisenbud, and Farkas
\cite{EisenbudHarrisBN-1,eisenbud_harris_Kodaira,Farkas2000,Farkas_2001,harris_mumford}. A
crucial ingredient in the study of the Kodaira dimension of $\M_{23}$
was the maximality of the Brill--Noether divisors.
		
Inspired by the lifting of line bundles on curves on K3 surfaces, the Donagi--Morrison conjecture, building on work of Farkas and Lelli-Chiesa~\cite{Farkas2000,Farkas_2001,lelli_rank2}, and classical results in Brill--Noether theory, the first two authors~\cite{AH} formulated a conjecture stating that the expected maximal
Brill--Noether loci are exactly the maximal ones with respect to
containment, except precisely in genus $g=7,8,9$, where there are
exceptional cases, see Conjecture~\ref{Conj Max BN loci}, and for details on the exceptional genera see Example~\ref{ex: unex cont} below. In other
words, given any two expected maximal Brill--Noether loci
$\BN{g}{r}{d}$ and $\BN{g}{r'}{d'}$, there is a curve $C\in \M_g$
admitting a $\g{r}{d}$ but not a $\g{r'}{d'}$. An appealing aspect of
the conjecture is a numerical characterization of the maximal elements
of the Brill--Noether stratification of $\M_g$. There has been recent
progress on this conjecture, referred to as the Maximal Brill--Noether
Loci conjecture, in work of many authors
\cite{AHL,bud2024brillnoether,BH_2024_BN_deg,CHOI2022,CHOI20141458,lelli_rank2,bigas2023brillnoether},
which together show the conjecture holds in genus $g\leq 23$ and for
infinitely many values of $g$.

In this paper, we settle the conjecture in the affirmative.  In fact,
we prove something stronger.
	
\begin{theoremintro}
\label{thm:conj_intro}
In every genus $g \geq 3$, with $g \neq 7,8,9$, the expected maximal
Brill--Noether loci $\BN{g}{r}{d}$ are all distinct and are the
Brill--Noether loci that are maximal with respect to containment.
More precisely, $\BN{g}{r}{d}$ has a component on which the general
curve does not admit any $g^{r'}_{d'}$ with $ d' \leq g-1$,
$\rho(g,r',d')<0$, and $(r',d') \neq (r,d)$, unless $(g,r,d)=(7, 2, 6), (8,1,4), (9,2,7)$. 
\end{theoremintro}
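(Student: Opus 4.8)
The plan is to construct, for each expected maximal Brill--Noether locus $\BN{g}{r}{d}$, an explicit curve realizing a $\g{r}{d}$ while carrying no competing special linear series. Since the paper's abstract emphasizes K3 surfaces and the Donagi--Morrison conjecture, I would realize the curves as smooth members $C$ of a linear system $|L|$ on a polarized K3 surface $(S,L)$, or on a suitable blow-up, chosen so that the Picard lattice of $S$ is as small as possible (ideally rank $2$) and engineered so that $L$ restricts to give the desired $\g{r}{d}$. The reason K3 surfaces are the right tool is that linear series on curves on K3s are governed by lattice-theoretic and vector-bundle data on the surface: by the Donagi--Morrison/Lelli-Chiesa circle of results, any $\g{r'}{d'}$ on $C$ with $\rho<0$ that is not cut out by the surface must \emph{lift} to a line bundle or a rank-$2$ Lazarsfeld--Mukai bundle on $S$, and this lifting is obstructed when the Picard group is too small. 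So the strategy is to trade a hard enumerative problem on $M_g$ for a constraint-satisfaction problem on Néron--Severi lattices.

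Concretely, the key steps are as follows. First, I would fix the numerical target: given $(g,r,d)$ expected maximal, list the finitely many pairs $(r',d')$ with $d'\le g-1$, $\rho(g,r',d')<0$, and $(r',d')\neq(r,d)$ that must be excluded; by the trivial containments and Clifford's theorem this list is controlled, and one can restrict attention to the ``boundary'' cases that are hardest to rule out. Second, I would build a K3 surface $S$ with $\Pic S$ of small rank containing a polarization $L$ with $L^2=2g-2$ together with the class producing the $\g{r}{d}$ (e.g. $L$ itself when $d=g-1$, or an auxiliary curve class of the appropriate degree), arranging the intersection form so that no effective divisor class of the degrees required by a forbidden $\g{r'}{d'}$ exists. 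Third, for a general $C\in|L|$ I would show every $\g{r'}{d'}$ with negative Brill--Noether number arises by restriction from $S$ or lifts to a Lazarsfeld--Mukai bundle; computing $c_1$ and $c_2$ of such a bundle and intersecting against $\Pic S$ then forces $(r',d')=(r,d)$. The exceptional triples $(7,2,6),(8,1,4),(9,2,7)$ should emerge precisely as the lattice configurations where an unavoidable extra class appears, matching Example~\ref{ex: unex cont}.

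I would organize the argument so that the genus-$g$ statement reduces, via the trivial containments $\M^r_{g,d}\subset\M^r_{g,d+1}$ and $\M^r_{g,d}\subset\M^{r-1}_{g,d-1}$, to verifying the non-containments only among \emph{adjacent} expected maximal loci, since a general curve avoiding the nearest competitors automatically avoids the rest. For the distinctness and maximality claims, once each locus is shown to contain a curve avoiding all strictly larger loci, maximality with respect to containment is immediate, and distinctness follows because two distinct expected maximal loci cannot be equal if each contains a curve the other does not. The lifting step is where I expect to invoke the refined K3 results most heavily, controlling the Clifford index and gonality of $C$ through $L$ and ruling out the rank-$2$ bundle degenerations.

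The hard part will be the lifting/exclusion step in full generality: showing that \emph{every} potentially competing $\g{r'}{d'}$ on the general $C\in|L|$ is accounted for by the surface geometry, and that no exotic linear series escapes the lattice constraints. This requires a uniform handle on Lazarsfeld--Mukai bundles across all $g$ and all excluded $(r',d')$ simultaneously, including the delicate non-primitive and non-base-point-free cases, and it is precisely here that the exceptional genera $7,8,9$ must be isolated rather than wished away. Controlling the degenerate rank-$2$ bundles uniformly in $g$, and verifying that the constructed K3 lattices actually exist and yield smooth irreducible curves of the right gonality, is the technical crux on which the whole theorem rests.
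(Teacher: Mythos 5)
Your setup is the right one---rank-two K3 lattices $\Lambda^r_{g,d}$ with the $\g{r}{d}$ cut out by $L$, and the reduction via trivial containments and Serre duality to competing expected maximal (or adjacent) pairs $(r'',d'')$ is exactly how the paper deduces Theorem~\ref{thm:conj_intro} from its main K3 result. But your central exclusion mechanism has a genuine gap: you propose to invoke Donagi--Morrison/Lelli-Chiesa lifting, i.e.\ that \emph{every} $\g{r'}{d'}$ with $\rho<0$ on $C\in|H|$ lifts to a line bundle or a rank-$2$ Lazarsfeld--Mukai bundle on $S$. No such theorem exists in the required generality: the Donagi--Morrison conjecture is proved only for $r'=1$ (Donagi--Morrison) and $r'=2$ (Lelli-Chiesa, with exceptions), and this limitation is precisely why the earlier lifting-based program of \cite{AH} could only establish partial cases of the conjecture. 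Your proposal correctly flags the ``lifting/exclusion step in full generality'' as the hard part, but as stated that step would fail for $r'\geq 3$, which is unavoidable since arbitrary competing ranks occur. A second, smaller misdirection: you plan to choose the lattice so that ``no effective divisor class of the degrees required by a forbidden $\g{r'}{d'}$ exists,'' but on $\Lambda^r_{g,d}$ there is no shortage of effective classes; pure lattice non-existence cannot carry the exclusion.

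The paper's route around this is its actual novelty. Instead of lifting the $\g{r'}{d'}$, it exploits only Lazarsfeld's non-simplicity of its Lazarsfeld--Mukai bundle $\E$ to produce a filtration $0\to\F\to\E\to\G\to\tau\to 0$ with $c_1(\F)$, $c_1(\G)$ effective and moving, and then introduces \emph{decomposition rigidity} (Definition~\ref{def:DR}): for the relevant $(g,r,d)$ the very general $(S,H)\in\KK{g}{r}{d}$ admits a \emph{unique} flexible decomposition $H\sim L+(H-L)$ (Proposition~\ref{prop:UD1}), which forces $\{c_1(\F),c_1(\G)\}=\{L,H-L\}$. This converts the problem, for all $r'$ simultaneously, into the numerical conditions \eqref{eq:condi1}--\eqref{eq:condi4} of Theorem~\ref{thm:numbers}---Brill--Noether generality constraints on the pieces of genus $r$ and $g+r-d-1$---which Lemma~\ref{lemma:numcond0} shows are unsatisfiable under \eqref{eq:NC30}, verified for expected maximal competitors in Lemmas~\ref{lemma:basicML2} and~\ref{lemma:basicML3} (with $(6,2,5)$ handled separately and the exceptional triples $(7,2,6),(8,1,4),(9,2,7)$ excluded there, rather than emerging from lattice obstructions as you predicted). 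Theorem~\ref{thm:numbers} is moreover an equivalence (a ``regeneration'' statement, proved by assembling $\E=\F\oplus\G$ from LM bundles on curves in $|L|$ and $|H-L|$), which your one-directional lifting strategy would not give. To repair your proposal you would need to replace the conjectural lifting input with an argument of this kind that extracts usable conclusions from non-simplicity alone.
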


We note that this theorem is stronger than the Maximal Brill--Noether Loci conjecture, as it shows the existence of a component in each expected maximal $\BN{g}{r}{d}$ demonstrating all non-containments.

The curves in Theorem~\ref{thm:conj_intro} behave as generally as possible for curves admitting a $\g{r}{d}$. Capturing this idea, we say that a curve $C\in\BN{g}{r}{d}$ is
\emph{$\g{r}{d}$-general} if any additional Brill--Noether special
divisors on $C$ are those determined by Serre duality or by the trivial
containments of Brill--Noether loci, see Definition~\ref{def:grd_gen}. We are naturally led to the following question generalizing the Maximal Brill--Noether Loci conjecture.

\begin{questionintro}
\label{questionintro}
Does $\BN{g}{r}{d}$ contain a $\g{r}{d}$-general curve?
\end{questionintro}

Theorem~\ref{thm:conj_intro} gives a positive answer for all expected maximal Brill--Noether loci, unless $g=7,8,9$. Our more general results, see Theorems~\ref{thm:numbers} and~\ref{thm:main_K3_0}, can be used to address additional cases. Previously, special cases of
Question~\ref{questionintro} have been investigated, see~\cite{Farkas_2001}. For $r=1$, the refined
Brill--Noether theory for curves of fixed gonality answers this
question, see e.g., \cite[Proposition~1.6]{AH}. Classically, the genus-degree formula for plane curves gives many examples where $\BN{g}{2}{d}$ does not contain $\g{2}{d}$-general curves. More generally, if $C$ can be embedded as a
degree $d$ curve in $\mathbb{P}^r$ and admits a $k$-secant $l$-plane,
then $C$ also carries a $\g{r-l-1}{d-k}$, which, if Brill--Noether
special, shows that $C$ is not $\g{r}{d}$-general. Thus Theorem~\ref{thm:conj_intro} shows that curves in maximal Brill--Noether loci do not admit unexpected secants, cf.\
\cite[Remark~6.5]{AH}.

\begin{corintro}
	In every genus $g \geq 3$, with $g \neq 7,8,9$, each expected maximal
	Brill--Noether locus $\BN{g}{r}{d}$ with $r\geq 3$ has a component for
	which a general curve can be embedded as a degree $d$ curve in $\PP^r$
	that does not admit any $k$-secant $l$-plane
	with $k - (k-l-1)(r-l) < 0$.
\end{corintro}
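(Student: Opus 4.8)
The plan is to work on the component of $\BN{g}{r}{d}$ provided by Theorem~\ref{thm:conj_intro}, realize the projective embedding, translate $k$-secant $l$-planes into sub-linear-series of the hyperplane bundle via geometric Riemann--Roch, and then rule out the unexpected secants. First I would fix the component of Theorem~\ref{thm:conj_intro} and let $C$ be a general member. The hypothesis $r \geq 3$ is used to guarantee that the complete $\g{r}{d}=|H|$ is very ample, so that $C$ embeds as a nondegenerate degree $d$ curve in $\PP^r$ with $h^0(H)=r+1$; I would extract this very ampleness from the explicit construction of the component (Theorem~\ref{thm:main_K3_0}), where $H$ arises as a restriction of a very ample class and projective normality is available.

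Next comes the geometric Riemann--Roch reduction. A $k$-secant $l$-plane $\Lambda$ (with $k \geq l+2$, else the condition $k-(k-l-1)(r-l)<0$ is vacuous) contains an effective divisor $D \leq \Lambda \cap C$ of degree $k$ with $\dim \langle D\rangle = r - h^0(H-D) \leq l$, hence $h^0(H-D) \geq r-l$, so $|H-D|$ is a $\g{r-l-1}{d-k}$ realized as a sub-series of $H$ through $H=(H-D)+D$. The locus of such $D$ inside $\Sym^k C$ is the degeneracy locus where the evaluation $\mathcal{O}^{\oplus (r+1)} \to E$ (with $E$ the tautological rank $k$ bundle) drops to rank $\leq l+1$, of expected codimension $(r-l)(k-l-1)$; thus its expected dimension is exactly $k-(k-l-1)(r-l)$. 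So the hypothesis says precisely that this secant scheme has negative expected dimension. Since $l \geq 0$ forces $r-l-1<r$ and $d-k<d$, and one checks $d-k \leq g-1$, the pair $(r-l-1,d-k)$ is always distinct from $(r,d)$.

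I would then split according to whether the induced $\g{r-l-1}{d-k}$ is Brill--Noether special. If $\rho(g,r-l-1,d-k)<0$, then this is a forbidden linear series for the general member of the component by Theorem~\ref{thm:conj_intro} (its invariants satisfy $d-k\leq g-1$, $\rho<0$, and $(r-l-1,d-k)\neq(r,d)$), so no such secant plane can exist. This is exactly the mechanism recorded in the discussion of $\g{r}{d}$-generality (Definition~\ref{def:grd_gen}): the secant produces a genuinely new Brill--Noether special series, contradicting the conclusion of Theorem~\ref{thm:conj_intro}.

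The hard part is the complementary regime, where $\rho(g,r-l-1,d-k)\geq 0$ even though $k-(k-l-1)(r-l)<0$; here Theorem~\ref{thm:conj_intro} is silent, since a general curve \emph{does} carry a $\g{r-l-1}{d-k}$, and the issue is instead whether some such bundle $M$ of degree $d-k$ with $h^0(M)\geq r-l$ happens to be a sub-series of $H$, i.e.\ whether $H-M$ is effective of degree $k$. Excluding this amounts to showing that the secant scheme attains its expected (negative) dimension, hence is empty, for the general member of the component; equivalently, that the chosen $\g{r}{d}$ does not accidentally align with a Brill--Noether non-special $\g{r-l-1}{d-k}$. I expect this alignment count to be the main obstacle, and I would resolve it through the explicit geometry of Theorem~\ref{thm:main_K3_0}: for curves in the constructed component, secant planes are governed by the ambient surface and controlled by Lazarsfeld--Mukai bundle techniques, so that the general such $C$ carries only secant planes of the expected dimension, and therefore none once $k-(k-l-1)(r-l)<0$.
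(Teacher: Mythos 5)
Your first three paragraphs reproduce what is, in effect, the paper's entire proof of this corollary: completeness and very ampleness of $|\O_C(L)|$ for $r\geq 3$ come from Theorem~\ref{thm:main_K3_0}(ii) (via Theorem~\ref{thm:numbers}), a $k$-secant $l$-plane gives $h^0(\O_C(L)(-D))\geq r-l$ and hence a $\g{r-l-1}{d-k}$ with $d-k\leq g-1$ and $(r-l-1,d-k)\neq(r,d)$, and Theorem~\ref{thm:conj_intro} forbids this series when $\rho(g,r-l-1,d-k)<0$. Your degeneracy-locus computation of the expected dimension $k-(k-l-1)(r-l)$ is correct and matches the standard count the paper invokes. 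The paper offers nothing beyond this mechanism --- indeed its own discussion hedges that the induced series contradicts $\g{r}{d}$-generality only ``if Brill--Noether special'' --- so up to that point your proposal and the paper coincide.

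The genuine gap is your final paragraph, which names a real difficulty but does not close it. The regime you flag, $\rho(g,r-l-1,d-k)\geq 0$ with $k-(k-l-1)(r-l)<0$, is in fact nonempty for expected maximal loci: for $(g,r,d)=(16,3,14)$ and $(k,l)=(5,1)$ the secant count is $5-3\cdot 2=-1<0$, yet the induced series is a $\g{1}{9}$ with $\rho(16,1,9)=0$, a pencil that \emph{every} genus-$16$ curve carries; similarly $(g,r,d)=(24,3,20)$ with $k=5,6,7$ yields $\g{1}{15},\g{1}{14},\g{1}{13}$, all with $\rho\geq 0$. In this regime the issue is whether one of the abstractly existing, Brill--Noether general series $A$ of type $\g{r-l-1}{d-k}$ \emph{aligns} with the fixed polarization, i.e., whether $\O_C(L)\otimes A^{-1}$ is effective, and no tool stated in the paper decides this: Theorem~\ref{thm:numbers} is an ``if and only if'' only for series with $\rho(g,r',d')<0$, and the Lazarsfeld--Mukai arguments detect only non-simple bundles, hence only special series. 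Your proposed resolution (``secant planes are governed by the ambient surface and controlled by Lazarsfeld--Mukai bundle techniques, so the general such $C$ carries only secant planes of the expected dimension'') is an assertion, not an argument; to make it a proof you would need, e.g., the vanishing $h^0(\O_C(L)\otimes A^{-1})=0$ for each of the relevant series $A$ with $\rho\geq 0$, or a dimension count for the secant degeneracy locus in a family over the component, neither of which you supply. So as written your proposal establishes the corollary only when the induced $\g{r-l-1}{d-k}$ is Brill--Noether special --- which is exactly the scope of the paper's one-line deduction --- and the complementary cases you correctly isolated remain unproved.
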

	
The main achievement of this paper is to construct
smooth curves of genus $g$ carrying a special $g^r_d$ and no
$g^{r'}_{d'}$ under certain numerical conditions on the integers $g,r,d,r',d'$.
Our main result is Theorem~\ref{thm:main_K3_0} stating that we can
construct such curves on K3 surfaces with the $g^r_d$ being induced by
a line bundle on the surface. These K3 surfaces live in a divisor
$\KK{g}{r}{d}$ in the moduli space $\K_g$ of quasi-polarized K3 surface of
genus $g$, see \S\ref{subsec:k3notation}. Despite the apparent complexity of the
numerical conditions in Theorem~\ref{thm:main_K3_0}, it provides a very efficient
tool to prove non-containments of Brill--Noether loci in a large
number of new cases, in particular, allowing us to deduce
Theorem~\ref{thm:conj_intro}. This proof, while inspired
by the program initiated in \cite{AH}, does not rely on
any of the special cases of the Maximal Brill--Noether Loci conjecture
treated in the literature so far, and in particular, shows that curves
on K3 surfaces suffice to distinguish the maximal Brill--Noether loci.

We briefly explain the main ideas behind the proof of Theorem~\ref{thm:main_K3_0}. In his famous proof of the Brill--Noether--Petri theorem in \cite{Laz}, Lazarsfeld showed that a Brill--Noether special curve on a K3 surface degenerates to a reducible curve, due to the non-simplicity of the associated Lazarsfeld--Mukai bundle. For many $(g,r,d)$, we prove, in Proposition~\ref{prop:UD1}, that for the very general element $(S,H)\in\KK{r}{g}{d}$, smooth curves $C\in|H|$ degenerate to a reducible curve $C_1 \cup C_2$ in an essentially unique way, a property we call \emph{decomposition rigidity}, cf.\ Definition~\ref{def:DR}. We prove that along this unique degeneration, the limit of any Brill--Noether special divisor on $C$ is highly constrained, allowing us to rule out the existence of additional Brill--Noether special divisors. More precisely, the genera of the components $C_1$ and $C_2$ are $r$ and $g+r-d-1$, respectively, and any Brill--Noether special divisor $\g{r'}{d'}$ on $C$ has ``limits'' $\g{r_1}{d_1}$ and $\g{r_2}{d_2}$ that are Brill--Noether general on the two components, see Theorem~\ref{thm:numbers}. This imposes numerical restrictions on $r_i$ and $d_i$, and thus on $g,r,d,r',d'$. In fact, we show that we can recover a $\g{r'}{d'}$ on $C$ if the numerical constraints are satisfied, so that Theorem~\ref{thm:numbers} gives necessary and sufficient conditions for the existence of a $\g{r'}{d'}$ on a curve in $|C|$. In particular, this can be interpreted as a ``regeneration theorem'' for Brill--Noether special linear systems on curves on K3 surfaces.

	\subsection*{Outline}
	In \S\ref{sec:bn loci}, we recall the definitions
	of expected maximal Brill--Noether loci, the history of progress on distinguishing Brill--Noether
	loci, as well as background on K3 surfaces and Lazarsfeld--Mukai bundles.
	In \S\ref{sec:K3-basic},  we introduce the notion of decomposition rigidity,  and then prove our main technical result, Theorem~\ref{thm:numbers}, in which we study the limiting behavior of Brill--Noether special divisors on curves on K3 surfaces satisfying decomposition rigidity. In \S\ref{sec:K3-ld}, we give sufficient conditions for K3 surfaces in $\K^r_{g,d}$ to satisfy
	decomposition rigidity, which hold for those associated to expected maximal Brill--Noether loci. In \S\ref{sec:numbers}, we study the numerical conditions forced on limits of Brill--Noether special divisors. Finally, in \S\ref{sec:app}, we prove our main
	result, Theorem~\ref{thm:main_K3_0}, and deduce Theorem~\ref{thm:conj_intro}. We conclude \S\ref{sec:app} with
	additional new applications of Theorem~\ref{thm:main_K3_0} to
	non-maximal Brill--Noether loci.

\subsection*{Acknowledgments} 
We would like to thank Steve Fan and Margherita Lelli-Chiesa for
helpful discussions. A.A. received partial support from Simons
Foundation grant 712097 and National Science Foundation grant
DMS-2200845.  R.H. thanks the Hausdorff Research Institute for
Mathematics funded by the Deutsche Forschungsgemeinschaft (DFG, German
Research Foundation) under Germany's Excellence
Strategy--EXC-2047/1--390685813, and the Max--Planck Institut f\"ur
Mathematik Bonn for their hospitality and financial support. The
authors also thank the Department of Mathematics at Dartmouth College,
whose Shapiro Visitor Program funded a long-term visit by A.L.K.\ to
Dartmouth, where this work began, as well as the Department of
Mathematics at Roma Tre and the Trond Mohn Foundation's project ``Pure
Mathematics in Norway'', which funded, respective visits by R.H. and
A.L.K.\ to Rome, where this work was continued.

	\section{Background on Brill--Noether loci and K3 surfaces}
	\label{sec:bn loci}
	
	\subsection{Brill--Noether loci}\label{subsec:BNloci}
	Surprisingly little is known about the geometry of Brill--Noether loci
	in general. The expected codimension of $\BN{g}{r}{d}$ in $\M_g$ is
	$-\rho(g,r,d)$, and it is known that the codimension of any
	component of $\BN{g}{r}{d}$ is at most $-\rho(g,r,d)$. More is known about the existence of components of expected dimension,
	see for example
	\cite{BH_2024_BN_deg,KLM,pflueger_legos,Ser,bigas2023brillnoether}.  However,
	equidimensionality of components is only known when $-3 \le \rho(g,r,d)\le -1$ (additionally assuming
	$g\geq 12$ when $\rho(g,r,d)=-3$)
	\cite{EdidinThesis,steffen_1998}. Complicating the picture, components
	of larger than expected dimension can exist, examples include
	Castelnuovo curves, see for example \cite[Remark 1.4]{pflueger_legos}.
	As mentioned, irreducibility is known for all Brill--Noether loci
	with $\rho=-1$ by \cite{EisenbudHarrisBN-1} as well as all
	$\BN{g}{2}{d}$ with $\rho=-2$ by \cite{CHOI2022}.

	There are various containments known among Brill--Noether loci.  For
	example, Clifford's theorem implies that
	$\BN{g}{r}{2r} \subset \BN{g}{1}{2}$.  There are \emph{trivial
		containments} $\BN{g}{r}{d}\subseteq \BN{g}{r}{d+1}$ obtained by
	adding a basepoint to a $\g{r}{d}$ on $C$; and
	$\BN{g}{r}{d}\subseteq \BN{g}{r-1}{d-1}$ when $r\geq 2$ obtained by
	subtracting a non-basepoint, cf.\
	\cite{Farkas2000,Lelli-Chiesa_the_gieseker_petri_divisor_g_le_13}. Modulo
	these trivial containments, the first two authors defined in \cite{AH}
	the \emph{expected maximal} Brill--Noether loci as follows.
	
	\begin{defn}
		\label{defn:expected_maximal}
		A Brill--Noether locus $\M^r_{g,d}$ is said to be {\em expected
			maximal} if $2 \leq d \leq g-1$, $\rho(g,r,d)<0$, $\rho(g,r,d+1)
		\geq 0$, and $\rho(g,r-1,d-1) \geq 0$ if $r \geq 2$. 
	\end{defn}
	We will say that a triple $(g,r,d)$ is \emph{associated to an expected
		maximal Brill--Noether locus} if $\BN{g}{r}{d}$ is expected maximal.

	We remark that, after accounting for Serre duality
	which gives $\BN{g}{r}{d}=\BN{g}{g-d+r-1}{2g-2-d}$, every
	Brill--Noether locus is contained in at least one
	expected maximal Brill--Noether locus. 
	
	The first two authors then posed a conjecture identifying the maximal Brill--Noether loci.
	
	\begin{conjecture}[{\cite[Conjecture 1]{AH}}]\label{Conj Max BN loci}
		In every genus $g\geq 3$, the maximal Brill--Noether loci are the expected maximal loci, except when $g=7,8,9$.
	\end{conjecture}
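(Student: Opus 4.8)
The plan is to establish the conjecture by producing, for each expected maximal locus $\BN{g}{r}{d}$ and each competing expected maximal locus $\BN{g}{r'}{d'}$ with $(r',d') \neq (r,d)$, a single smooth curve $C$ of genus $g$ that carries a $\g{r}{d}$ but no $\g{r'}{d'}$. Since every Brill--Noether locus is contained, via the trivial containments and Serre duality, in at least one expected maximal locus (as noted in the remark following Definition~\ref{defn:expected_maximal}), this single construction does double duty: it shows that the expected maximal loci are exactly the loci maximal with respect to containment, and that they are pairwise distinct. I would realize $C$ on a K3 surface: fixing a lattice-polarized family $\KK{g}{r}{d}$ of quasi-polarized K3 surfaces $(S,H)$ whose Picard lattice is engineered so that a line bundle on $S$ restricts to a $\g{r}{d}$ on the smooth members $C \in |H|$, the aim is to show that the very general such $C$ carries no Brill--Noether special series beyond those forced by Serre duality and the trivial containments.

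The technical engine is a degeneration argument. First I would establish \emph{decomposition rigidity} (Proposition~\ref{prop:UD1}): for the very general $(S,H) \in \KK{g}{r}{d}$, the smooth curves in $|H|$ degenerate to a reducible curve $C_1 \cup C_2$ in an essentially unique way, with $C_1$ and $C_2$ of genera $g_1 = r$ and $g_2 = g+r-d-1$. This rigidity comes from controlling the Picard lattice so that $H$ admits only one relevant decomposition into effective classes. Next, following Theorem~\ref{thm:numbers}, I would track how a hypothetical $\g{r'}{d'}$ on a smooth $C$ specializes along this degeneration: its limit yields aspects $\g{r_1}{d_1}$ and $\g{r_2}{d_2}$ on $C_1$ and $C_2$ which, because the components are themselves Brill--Noether general, must satisfy $\rho(g_i,r_i,d_i) \geq 0$ with $g_1 = r$ and $g_2 = g+r-d-1$. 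Combined with the compatibility relations among the $r_i, d_i$ and $r',d'$ imposed by the limit linear series structure, this pins the admissible $(r',d')$ down to a short explicit list. Conversely, I would verify that whenever these numerical constraints \emph{are} met a $\g{r'}{d'}$ genuinely regenerates onto the smooth curve, so that Theorem~\ref{thm:numbers} furnishes a sharp, two-sided numerical criterion for the existence of a $\g{r'}{d'}$ on a curve in $|H|$.

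With this criterion in hand, the conjecture reduces to a purely numerical verification, packaged as Theorem~\ref{thm:main_K3_0}: for each genus $g \geq 3$ and each pair of distinct expected maximal triples $(g,r,d)$ and $(g,r',d')$ with $d' \leq g-1$ and $\rho(g,r',d') < 0$, one checks that the numerical conditions of Theorem~\ref{thm:numbers} fail, so that a $\g{r}{d}$-general curve on the associated K3 surface carries no $\g{r'}{d'}$. Assembling these non-containments gives Theorem~\ref{thm:conj_intro}, and hence the conjecture.

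I expect the main obstacle to lie in two places. First, establishing decomposition rigidity uniformly is delicate: it requires a Hodge-theoretic and lattice-theoretic argument guaranteeing that the very general member of $\KK{g}{r}{d}$ has Picard rank exactly $2$ with no unexpected effective classes, so as to rule out all parasitic sub-line-bundles that could produce alternative degenerations or spurious limit series. Second, the closing numerical argument must be carried out uniformly in $g$, and the genera $g = 7,8,9$ are genuine exceptions—reflected in the excluded triples $(7,2,6)$, $(8,1,4)$, $(9,2,7)$ of Theorem~\ref{thm:conj_intro}—so I would isolate these small genera and treat them by hand, confirming exactly the listed unexpected containments and verifying that no others occur.
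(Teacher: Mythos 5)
Your proposal follows essentially the same strategy as the paper: realize curves on K3 surfaces in $\KK{g}{r}{d}$ with Picard lattice $\Lambda^r_{g,d}$, establish decomposition rigidity (Proposition~\ref{prop:UD1}), derive a two-sided numerical criterion for the existence of a $\g{r'}{d'}$ (Theorem~\ref{thm:numbers}), and close with the numerical verification of Lemmas~\ref{lemma:numcond0}, \ref{lemma:basicML2} and \ref{lemma:basicML3}, treating $g=7,8,9$ as genuine exceptions. One caveat is worth flagging, however: you describe the engine behind Theorem~\ref{thm:numbers} as an honest limit-linear-series specialization, with ``aspects'' $\g{r_1}{d_1}$, $\g{r_2}{d_2}$ on the components of a degenerate curve $C_1\cup C_2$. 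The paper never invokes limit linear series; the decomposition happens at the level of the Lazarsfeld--Mukai bundle $\E$ of the hypothetical $\g{r'}{d'}$, which is non-simple since $\rho(g,r',d')<0$, hence admits an endomorphism dropping rank everywhere, producing a subsheaf $\F$ and quotient $\G$ whose first Chern classes are pinned to $L$ and $H-L$ by decomposition rigidity. This is what yields the precise compatibilities \eqref{eq:condi1}--\eqref{eq:condi2}, namely $r_1+r_2=r'-1$ (rank additivity of bundles, not the aspect count $r_1=r_2=r'$ of limit linear series on compact-type curves) and $d_1+d_2\leq d'-d+2r-2$ (Chern class bookkeeping), as well as the two-sided bounds $0\leq\rho(g_i,r_i,d_i)<g_i$ coming from $h^0,h^1>0$ of the associated torsion-free sheaves; your $\rho\geq 0$ alone is not enough for the ruling-out step. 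A literal limit-linear-series implementation would moreover face the obstacle that $C_1$ and $C_2$ meet in $L\cdot(H-L)=d-2r+2$ points, so the degenerate curve is not of compact type and no off-the-shelf theory applies. Finally, your uniform numerical verification misses the edge case $(g,r,d)=(6,2,5)$, where condition \eqref{eq:cliff+} fails and the paper instead argues via smooth plane quintics (Remark~\ref{rem:625}, or \cite[Proposition~6.1]{AH}); apart from these points your plan matches the paper's proof.
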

	In other words, the expected maximal Brill--Noether loci
	should be maximal with respect to containment, except when $g = 7, 8,
	9$. In particular, being maximal with respect to containment asks that for every pair of expected maximal Brill--Noether loci $\BN{g}{r}{d}$ and $\BN{g}{r'}{d'}$, there exists a curve $C\in\BN{g}{r}{d}$ such that $C\notin \BN{g}{r'}{d'}$. A priori, the curve $C$ could depend on the pair of expected maximal Brill--Noether loci. However, Theorem~\ref{thm:conj_intro} shows the existence of a component whose general member demonstrates all of the non-containments.

We recall the details of the exceptional cases.
	
\begin{example}[Unexpected containments of expected maximal Brill--Noether loci, cf.\ {\cite[Props. 6.2-4]{AH}}] \label{ex: unex cont} 
	We have $\M^2_{7,6} \subset \M^1_{7,4}$, $\M^1_{8,4} \subset\M^2_{8,7}$ and $\M^2_{9,7} \subset\M^1_{9,5}$, and these containments are strict. 
	
	The containment  $\M^2_{7,6} \subset \M^1_{7,4}$ follows from the fact that a $g^2_6$ on a smooth curve of genus $7$ cannot be very ample by the genus formula for plane curves, whence the curve must have a $g^1_4$. The containment $\M^2_{9,7} \subset \M^1_{9,5}$ follows for the same reason. (These were pointed out by H. Larson.)
	
	The containment $\M^1_{8,4} \subset \M^2_{8,7}$ was proven in \cite[Lemma 3.8]{Mu2}. Let $C$ be a smooth curve with a $g^1_4$. Its Serre adjoint is a $g^4_{10}$. If it is not very ample, $C$ will have a $g^3_8$, whose Serre adjoint is a $g^2_6$, whence $C$ carries a $g^2_7$. If the $g^4_{10}$ is very ample, it embeds $C$ into  $\PP^4$ as a curve of degree $10$, where it has $8$ trisecant lines by the Berzolari formula. Hence $C$ has a $g^2_7$.
\end{example}
	
	Prior to the formulation of Conjecture~\ref{Conj Max BN loci} various
	non-containments between Brill--Noether loci were known.  For example, generalizing work of Eisenbud--Harris~\cite{eisenbud_harris_Kodaira} and Farkas~\cite{Farkas2000} in genus~$23$, Choi, Kim, and Kim~\cite{CHOI2012377,CHOI20141458,CHOI2022} showed
	that Brill--Noether loci with $\rho=-1,-2$ in every genus have distinct
	support. Already, this is sufficient to prove Conjecture~\ref{Conj Max
		BN loci} for any genus $g$ such that $g+1$ or $g+2$
	is of the form $\operatorname{lcm}(1,2,\dots,n)$ for some $n\geq 3$,
	the first few being $g=4,5, 10, 11, 58,59, 418, 419, 838, 839, 2518,
	2519, \dotsc$.  Choi and Kim~\cite{CHOI2022} showed
	that Brill--Noether loci with $\rho = -2$  are
	never contained in each other nor in certain other Brill--Noether
	divisors. Lelli-Chiesa~\cite{lelli_rank2} and the first two authors~\cite{AH} also showed
	various non-containments via the lifting of line bundles on curves on K3
	surfaces. As conjectured by Pflueger~\cite{pflueger} and proved by Jensen--Ranganathan~\cite{jensen_ranganathan} and Cook-Powell--Jensen~\cite{cook-powell_jensen}, the combinatorial formula for the expected dimension of Hurwitz--Brill--Noether loci implies that the expected maximal Brill--Noether loci with $r=1$ are not contained in any other expected maximal loci, except when $g=8$, see~\cite[Proposition~1.6]{AH}. This result is now part of the full Brill--Noether theory for curves of fixed gonality, established by Larson, Larson, and Vogt~\cite{larson_refined_BN_Hurwitz,larson_larson_vogt_2020global}. More recently, the gonality
	stratification of $\M_g$ (work of Larson and the first two authors~\cite{AHL}), strata of differentials (work of Bud~\cite{bud2024brillnoether}), and limit
	linear series (work of Teixidor i Bigas~\cite{bigas2023brillnoether} and Bud and the second author~\cite{BH_2024_BN_deg}) were also used to show new non-containments between
	Brill--Noether loci, which all together was sufficient to prove Conjecture~\ref{Conj Max BN
		loci} for $g \leq 23$.

		The idea of Conjecture~\ref{Conj Max BN loci} is that curves in expected maximal Brill--Noether loci should behave as generally as possible, given that they carry a special linear system. More generally, we are interested in Brill--Noether special curves that admit no further Brill--Noether special divisors, of course allowing those forced by trivial containments of Brill--Noether loci and Serre duality.
		
		\begin{defn}\label{def:grd_gen}
			We say a curve $C\in\BN{g}{r}{d}$ is \emph{$\g{r}{d}$-general} if the only Brill--Noether special divisors $\g{r'}{d'}$ with $r'\geq 1$ and $d'\leq g-1$ on $C$ are of the form $(r',d')=(r-i,d-i)$ for $0\leq i \leq r-1$ or $(r',d')=(r,d+j)$ for $0\leq j \leq g-1-d$.
		\end{defn}

		In this language, Theorem~\ref{thm:conj_intro} says that every expected maximal Brill--Noether locus $\BN{g}{r}{d}$ contains a $\g{r}{d}$-general curve, unless $(g,r,d)=(7, 2, 6), (8,1,4), (9,2,7)$. This leads naturally to Question~\ref{questionintro}, describing when other Brill--Noether loci contain a $\g{r}{d}$-general curve. We note that containing a $\g{r}{d}$-general curve is an open condition in $\BN{g}{r}{d}$. 
		Various non-trivial containments of Brill--Noether loci give examples of $\BN{g}{r}{d}$ not containing $\g{r}{d}$-general curves. The genus-degree formula for plane curves gives containments of the form $\BN{g}{2}{d}\subseteq \BN{g}{1}{d-2}$, giving many examples of $\BN{g}{2}{d}$ not containing $\g{2}{d}$-general curves.
		Generalizing this, the existence
		of a $k$-secant $l$-plane to the image of a curve under a $\g{r}{d}$
		gives a $\g{r-l-1}{d-k}$, which, if Brill--Noether special, gives
		further examples of curves that are not $\g{r}{d}$-general. The exceptional
		cases in Example~\ref{ex: unex cont} to Conjecture~\ref{Conj Max BN loci} are exactly of this form.  
		
		 The refined Brill--Noether theory for curves of fixed gonality provides a complete answer to Question~\ref{questionintro} for $r=1$. As proved in~\cite{jensen_ranganathan}, the general smooth projective $k$-gonal curve $C$ of genus $g$ admits a $\g{r}{d}$ if and only if Pflueger's Brill--Noether number \[\rho_k(g,r,d)\coloneqq \rho(g,r,d)+ \underset{0 \leq \ell \leq \min\{r,~g-d+r+1\}}{\max} (g-k-d+2r+1)\ell-\ell^2\] is non-negative. Hence $\BN{g}{1}{k}$ contains a $\g{1}{k}$-general curve if and only if $\rho_k(g,r,d)<0$ for all $(g,r,d)$ associated to expected maximal Brill--Noether loci with $r\geq2$.
		
		\begin{example}\label{ex:fix_gon_grd_gen}
			The expected maximal locus $\BN{g}{1}{k}$ has $k=\floor{\frac{g+1}{2}}$, and contains a $\g{1}{k}$-general curve unless $g=8$, as shown in \cite[Proposition~1.6]{AH}. For  sub-maximal gonality strata, the situation becomes more complicated. For example, if $k=\floor{\frac{g-1}{2}}\geq 2$, then one can verify that $\BN{g}{1}{k}$ contains a $\g{1}{k}$-general curve unless $g=6,7,8,10,11,14$.
		\end{example}
	
	We recall the numerical classification of expected maximal Brill--Noether loci from \cite{AHL}.
	\begin{lemma}[{\cite[Lemma 1.1]{AHL}}] \label{rbound}
		An expected maximal
		 Brill--Noether locus $\BN{g}{r}{d}$ exists for some $d$ if and only if
		\begin{equation} \label{rcases} 1\leq r \leq \begin{cases}
				\floor{\sqrt{g}} & \text{if~} g\geq\floor{\sqrt{g}}^2+\floor{\sqrt{g}}\\
				\floor{\sqrt{g}}-1 & \text{if~} g<\floor{\sqrt{g}}^2+\floor{\sqrt{g}}.
			\end{cases}
		\end{equation}
	\end{lemma}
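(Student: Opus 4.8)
The plan is to reduce the defining inequalities of expected maximality to a single clean condition relating $r$ and $g$, and then to translate that condition into the stated bound in terms of $\floor{\sqrt{g}}$. Throughout I keep $\rho(g,r,d)=g-(r+1)(g-d+r)$ and introduce the auxiliary quantity $s=g-d+r$, so that $d=g+r-s$ and $\rho(g,r,d)=g-(r+1)s$. The point of this substitution is that the three Brill--Noether numbers appearing in Definition~\ref{defn:expected_maximal} all become transparent functions of $s$: namely $\rho(g,r,d+1)=g-(r+1)(s-1)$ and, for $r\geq 2$, $\rho(g,r-1,d-1)=g-rs$.

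First I would observe that, for fixed $r\geq 1$, the pair of conditions $\rho(g,r,d)<0$ and $\rho(g,r,d+1)\geq 0$ is equivalent to $(r+1)(s-1)\leq g<(r+1)s$, i.e.\ $g/(r+1)<s\leq g/(r+1)+1$, which pins down $s$ uniquely as $s=\floor{g/(r+1)}+1$. Hence for each $r$ there is at most one candidate degree $d=g+r-s$, and it remains to decide when this candidate satisfies the remaining constraints $2\leq d\leq g-1$ and, for $r\geq 2$, $\rho(g,r-1,d-1)\geq 0$.

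The key simplification is that the condition $\rho(g,r-1,d-1)\geq 0$, that is $rs\leq g$, is automatically implied by the upper degree bound $d\leq g-1$. Indeed, $d\leq g-1$ reads $s\geq r+1$, and then $rs=(r+1)s-s\leq (r+1)s-(r+1)=(r+1)(s-1)\leq g$, where the final inequality is exactly $\rho(g,r,d+1)\geq 0$. The lower bound $d\geq 2$ is never the binding constraint for $g\geq 3$ and can be checked directly (for $r=1$ the unique candidate is $d=\ceil{g/2}$, and for $r\geq 2$ with $g$ large enough to allow it the degree is visibly larger). Consequently, for each $r$ an expected maximal $\BN{g}{r}{d}$ exists if and only if $s=\floor{g/(r+1)}+1\geq r+1$, equivalently $\floor{g/(r+1)}\geq r$, equivalently, since $r$ is an integer, $g\geq r(r+1)$.

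It then remains to read off the admissible range of $r$ from the inequality $g\geq r(r+1)$. Since $r(r+1)$ is strictly increasing in $r$, the admissible values form an interval $1\leq r\leq r_{\max}$, and I only need to identify $r_{\max}$, the largest $r$ with $r(r+1)\leq g$. Writing $n=\floor{\sqrt{g}}$, so that $n^2\leq g<(n+1)^2$, I would split into the two cases of the statement: if $g\geq n^2+n$ then $n(n+1)\leq g$ while $(n+1)(n+2)=n^2+3n+2>n^2+2n+1>g$, giving $r_{\max}=n$; and if $g<n^2+n$ then $n(n+1)>g$ while $(n-1)n=n^2-n\leq n^2\leq g$, giving $r_{\max}=n-1$. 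The only real obstacle here is bookkeeping: one must verify carefully that the forced value of $s$ genuinely realizes $\rho(g,r,d)<0\leq\rho(g,r,d+1)$ and keeps $d$ in the range $2\leq d\leq g-1$, after which the equivalence with $g\geq r(r+1)$ and the final case analysis are entirely elementary.
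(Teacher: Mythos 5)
Your proof is correct, and the paper itself offers no argument to compare against: it simply cites \cite[Lemma 1.1]{AHL} for this statement. Your reduction of expected maximality to the single condition $g\geq r(r+1)$ is exactly consistent with the paper's surrounding discussion --- your forced value $s=\floor{g/(r+1)}+1$ recovers the formula $d=\dmax(g,r)$ in \eqref{dmaxdef}, and your observation that $\rho(g,r-1,d-1)\geq 0$ follows from $d\leq g-1$ together with $\rho(g,r,d+1)\geq 0$ is precisely the deduction the paper notes after \eqref{eq:EM3} --- so your argument is a faithful (and complete) reconstruction of the intended proof.
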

	
	Once a rank $r$ satisfying the conditions of Lemma~\ref{rbound} is fixed,
	the unique degree $d$ that makes $\M^r_{g,d}$ expected maximal is the largest $d$ such that $\rho(g, r, d) < 0$, namely 
	\begin{equation} \label{dmaxdef}
		d = \dmax(g, r) \coloneqq  r+\left\lceil \frac{gr}{r+1} \right\rceil -1.
	\end{equation}
	
	In other words, Lemma~\ref{rbound} says that the
	expected maximal Brill--Noether loci in $\M_g$ are precisely
	the $\BN{g}{r}{d}$ for $r$ satisfying \eqref{rcases} and $d = \dmax(g,r)$.
	
	We remark for later use that an immediate
	consequence of \eqref{rcases} is the inequality 
\begin{equation} \label{eq:EM3}
g \geq r^2+r
\end{equation}
for $(g,r,d)$ associated to an expected maximal Brill--Noether locus,  which can also directly be deduced from the facts that  $d\leq g-1$ and $\rho(g,r,d+1)\geq 0$.

	\subsection{K3 surfaces} \label{subsec:k3notation}

	We will work with quasi-polarized K3 surfaces of genus $g$, that is, with pairs $(S,H)$ where $S$ is a K3 surface and $H\in\Pic(S)$ is a primitive big and nef line bundle such that $H^2=2g-2$, so that all smooth irreducible curves in $|H|$ have genus $g$. To distinguish Brill--Noether loci, we want such curves to carry a $\g{r}{d}$, which we ensure by specifying the Picard group. In the moduli space $\K_g$ of primitively quasi-polarized K3 surfaces of genus $g$, the Noether--Lefschetz locus consists of K3 surfaces with Picard rank $>1$. Via Hodge theory, the Noether--Lefschetz locus is a union of countably many irreducible Noether--Lefschetz divisors. For $g\geq 2$, $r\geq 0$, and $d\geq 0$, we denote by $\KK{g}{r}{d}$ the Noether--Lefschetz divisor parameterizing quasi-polarized K3 surfaces $(S,H)\in\K_g$ such that the lattice $\Lambda^r_{g,d}=\ZZ[H]\oplus\ZZ[L]$ with intersection matrix 
\begin{equation}
\begin{bmatrix}\label{eq:int_num}
	H^2 & H \cdot L \\
	L \cdot H & L^2
\end{bmatrix}
=
\begin{bmatrix}
	2g-2 & d \\
	d & 2r-2
\end{bmatrix}
\end{equation}
	admits a primitive embedding in $\Pic(S)$ preserving $H$ (using the notation from \cite{AH,Haburcak}). 
	
	It is well known that $\KK{g}{r}{d}$ is non-empty if and only if the discriminant 
	\begin{equation} \label{eq:defD_grd}
		\Delta^r_{g,d}\coloneqq 4(g-1)(r-1)-d^2 <0.
	\end{equation} Indeed, the Hodge index theorem implies $\Delta^r_{g,d}<0$ for any $(S,H)\in\KK{g}{r}{d}$. Surjectivity of the period map (see~\cite[Thm. 2.9(i)]{Mo} or \cite{Ni}) shows there exists a K3 surface $S$ with $\Pic(S)=\Lambda^r_{g,d}$. Acting with \emph{Picard--Lefschetz reflections} on $\Lambda^r_{g,d}$, and using  \cite[VIII, Prop. 3.9]{BPHV}, we may assume that $H$ is nef. In particular, for $(g,r,d)$ satisfying \eqref{eq:defD_grd}, $\KK{g}{r}{d}$ is non-empty and the very general $(S,H)\in\KK{g}{r}{d}$ has $\Pic(S)=\Lambda^r_{g,d}$. We note that when $\KK{g}{r}{d}$ is non-empty, it is an irreducible divisor of $\K_g$, see~\cite{OGrady}. We remark that the $\K^r_{g,d}$ with $\rho(g,r,d)<0$ are precisely the Noether--Lefschetz divisors parameterizing the Brill--Noether special primitively polarized K3 surfaces in the sense of Mukai~\cite{mukai:Fano_threefolds}, see~\cite{GLT,Haburcak}.

Up to Serre duality, we can assume that $(g,r,d)$ associated to a Brill--Noether locus satisfies $d\leq g-1$. Hence, for distinguishing Brill--Noether loci, we will focus on K3 surfaces in $\KK{g}{r}{d}$ satisfying
	\begin{equation} \label{eq:basiccond}
		g \geq 3, \;\; r \geq1, \;\; 2 \leq d \leq g-1.
	\end{equation}
We are interested in $\K^r_{g,d}$ because curves in $|H|$ carry a $\g{r}{d}$. We make this more precise as follows (cf. \cite[Lemma 1.1]{AH}.

\begin{lemma}\label{lemma:restrict_grd}
	Let $(S,H)\in\K^r_{g,d}$ and assume that \eqref{eq:basiccond} holds. Then any irreducible $C \in |H|$ has genus $g$ and carries a $g^r_d$, and $|\O_C(L)|$ is a $\g{r}{d}$ if and only if $h^1(L)=h^1(H-L)=0$.
	
	Furthermore, if $\Pic(S) \cong \Lambda^r_{g,d}$, then 
	$|H|$ contains smooth irreducible curves and $|\O_C(L)|$ is a base point free $\g{r}{d}$ for any smooth irreducible $C \in |H|$.
\end{lemma}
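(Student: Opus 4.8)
The plan is to reduce every assertion to cohomology on $S$ by restricting line bundles to $C$, and to extract the geometric statements from Saint--Donat's theory of linear systems on K3 surfaces. The starting point is the restriction sequence
\[
0 \longrightarrow \O_S(L-H) \longrightarrow \O_S(L) \longrightarrow \O_C(L) \longrightarrow 0,
\]
using $\O_S(-C) \cong \O_S(-H)$. Since $K_S \cong \O_S$, adjunction gives $p_a(C) = 1 + \tfrac12 C^2 = g$, and $\deg \O_C(L) = L\cdot H = d$, so every claim about a $\g{r}{d}$ amounts to computing $h^0(\O_C(L))$. Under \eqref{eq:basiccond} the bundle $H$ is nef with $(L-H)\cdot H = d-(2g-2) < 0$ and $(-L)\cdot H = -d < 0$, so neither $L-H$ nor $-L$ is effective; hence $h^0(L-H)=0$ and, by Serre duality, $h^2(L) = h^0(-L) = 0$. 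Riemann--Roch on the K3 surface then yields $\chi(L) = 2 + \tfrac12 L^2 = r+1$, so that $h^0(L) = (r+1) + h^1(L) \geq r+1$.

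Feeding these vanishings into the long exact sequence gives
\[
0 \longrightarrow H^0(L) \longrightarrow H^0(\O_C(L)) \longrightarrow H^1(L-H) \xrightarrow{\ \alpha\ } H^1(L),
\]
whence $h^0(\O_C(L)) = h^0(L) + \dim\ker\alpha \geq r+1$; in particular $C$ always carries a $\g{r}{d}$ cut out inside $|\O_C(L)|$, giving the first assertion together with the genus computation. For the stated equivalence I would argue as follows. If $h^0(\O_C(L)) = r+1$, then $h^0(L) = r+1$, which forces $h^1(L)=0$; then $\alpha$ is the zero map, so $\ker\alpha = H^1(L-H) = 0$, i.e.\ $h^1(H-L)=0$ by Serre duality on $S$. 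Conversely, if $h^1(L) = h^1(H-L) = 0$, the sequence collapses to an isomorphism $H^0(L) \cong H^0(\O_C(L))$ of dimension $r+1$. Thus $|\O_C(L)|$ is a $\g{r}{d}$ exactly when $h^1(L) = h^1(H-L) = 0$.

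For the second part I would assume $\Pic(S) \cong \Lambda^r_{g,d}$ and invoke Saint--Donat's results. As $H$ is nef, $|H|$ has no fixed component, and since $H^2 = 2g-2 \geq 4 > 0$ it is base point free once one rules out an isotropic class $E$ with $E^2 = 0$ and $H\cdot E = 1$; such an $E$ would make $\langle H, E\rangle$ a unimodular rank-two sublattice of determinant $-1$, which by the index formula can only occur when $\Delta^r_{g,d} = -1$, a finite list of cases excluded using $2\leq d\leq g-1$. Bertini together with the vanishing $h^1(H)=0$ (Kawamata--Viehweg) then shows the general member of $|H|$ is smooth and connected, hence smooth irreducible. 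For such a $C$ I would check, again from the rank-two lattice $\Lambda^r_{g,d}$ and the vanishing theorems, that the effective classes $L$ and $H-L$ (effective because $\chi \geq 2$ while $h^2 = 0$) satisfy $h^1(L) = h^1(H-L) = 0$; by the first part this makes $|\O_C(L)|$ a $\g{r}{d}$ for every smooth irreducible $C \in |H|$. Finally, base point freeness of $\O_C(L)$ follows from base point freeness of $|L|$ on $S$ together with the surjectivity of $H^0(S,L) \to H^0(C, \O_C(L))$, which holds because $h^1(L-H) = 0$.

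The cohomological part (the first assertion and the equivalence) is routine given standard K3 facts. The main obstacle is the lattice-theoretic input in the second part: verifying that $\Lambda^r_{g,d}$ contains none of the special classes --- $(-2)$-curves meeting $H$ or $L$ negatively, isotropic classes $E$ with $H\cdot E = 1$, or elliptic-pencil decompositions $L\sim kE$ --- that Saint--Donat's theory isolates as the only obstructions both to base point freeness and to the vanishing of $H^1$. This is exactly where the hypotheses \eqref{eq:basiccond} and $\Delta^r_{g,d}<0$ enter, and it is most cleanly organized by following the analogous argument in \cite[Lemma~1.1]{AH}.
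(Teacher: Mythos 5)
Your proof of the first part is correct and is essentially the paper's own argument: effectivity and the vanishings $h^0(L-H)=h^2(L)=0$ from Riemann--Roch and the sign conditions $L\cdot H>0$, $(L-H)\cdot H<0$, then the restriction sequence giving $h^0(\O_C(L))=r+1+h^1(L)+\dim\ker\bigl(H^1(L-H)\to H^1(L)\bigr)$, from which the stated equivalence follows exactly as you write.

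The second part, however, has a genuine gap. Your plan is to verify that $\Lambda^r_{g,d}$ contains \emph{none} of the Saint--Donat obstructions to global generation of $L$ and $H-L$, but that verification fails for an infinite family of triples allowed by \eqref{eq:basiccond}. When $g=4r-2$ and $d=g-1=4r-3$ (e.g.\ $(g,r,d)=(6,2,5),(10,3,9),(14,4,13),\dots$, all with $\Delta^r_{g,d}<0$), the effective member of $\{\pm(H-2L)\}$ is a class $\Gamma$ with $\Gamma^2=-2$ and $\Gamma\cdot H=0$, and one has $H\sim 2D+\Gamma$ with $D\sim L$ or $D\sim H-L$, where $\Gamma$ is a base divisor of $|H-D|$. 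This breaks your argument in either orientation: if $D\sim H-L$, then $|L|$ itself is \emph{not} base point free, so your final step --- deducing base point freeness of $\O_C(L)$ from that of $|L|$ --- fails; if $D\sim L$, then $H-L$ is not globally generated, so $h^1(H-L)=0$ cannot be obtained from Saint--Donat's ``represented by an irreducible curve'' criterion, which is how you propose to get it. The paper repairs precisely this case: since $\Gamma\cdot H=0$, one has $\O_C(\Gamma)\cong\O_C$, hence $\O_C(L)\cong\O_C(H-L)\cong\O_C(D)$ with $D$ globally generated, and $h^1(D+\Gamma)=0$ follows from $1$-connectedness rather than irreducibility. A secondary inaccuracy: your claim that an isotropic $E$ with $E\cdot H=1$ forces $\Delta^r_{g,d}=-1$, ``a finite list of cases excluded using $2\leq d\leq g-1$,'' is wrong about finiteness, since $\Delta^r_{g,d}=-1$ holds for $(g,r,d)=(m^2+m+1,2,2m+1)$ for every $m\geq 2$, all satisfying \eqref{eq:basiccond}. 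The conclusion is salvageable, but only by computation: such an $E$ makes $(H,E)$ a basis of the even unimodular lattice $\Lambda^r_{g,d}\cong U$, so $H=e+(g-1)f$ in a standard basis with $f=E$; writing $L=xe+yf$, the basis condition gives $y-(g-1)x=\pm 1$, whence $d=H\cdot L=2(g-1)x\pm 1$, incompatible with $2\leq d\leq g-1$. (Also, nefness of $H$ alone does not exclude fixed components --- $H\sim aE+\Gamma$ can be nef and big --- though that configuration is subsumed by your $E\cdot H=1$ check.)
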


\begin{proof}
	We note that $L^2 \geq 0$, $(H-L)^2=2(g-d-2+r) \geq 2(r-1) \geq 0$,
	$L \cdot H=d >0$ and  $(H-L)\cdot H=2(g-1)-d \geq g-1 \geq 2$, whence by Riemann-Roch and Serre duality, $L$ and $H-L$ are both effective and nontrivial. In particular, $h^0(L-H)=0$ and $\dim |L|=\frac{1}{2}L^2+1 +h^1(L) =r+h^1(L)\geq r$.
	The first assertion in the lemma then follows by adjunction and the sequence
	\[
	\xymatrix{
		0 \ar[r] & \O_S(L-H) \ar[r] & \O_S(L) \ar[r] & \O_C(L) \ar[r] & 0.}
	\] 
	
	By classical results \cite[Cor. 3.2, \S \;2.7]{SD}, a line bundle $M$ on a K3 surface is globally generated if and only if there exists no curve $\Gamma$ such that $\Gamma^2=-2$ and $\Gamma.M <0$ or $\Gamma^2=0$ and $\Gamma \cdot M=1$. Hence, if
	$\Pic(S) \cong \Lambda^r_{g,d}$ one may explicitly check, arguing as in e.g. \cite{Kn},  that $L$ and $H-L$ are globally generated, except in one case: $g=4r-2$, $d=g-1=4r-3$ and $H \sim 2D+\Gamma$, with
	$D \sim L$ or $D \sim H-L$ globally generated and $\Gamma$ a $(-2)$-curve such that $D \cdot \Gamma=1$; in this case, $\Gamma$ is the base divisor of $|H-D|$.
	(We leave the details to the reader, since we will not make use of the last assertion in the lemma in this work.) Since $\Gamma \cdot H=0$, we see that $\O_C(L) \cong \O_C(H-L) \cong \O_C(D)$ for any $C \in |H|$. Thus, in all cases, $\O_C(L)$ is base point free.

	Since $L$ and $H-L$ are part of a basis of $\Pic(S)$, they cannot be a multiple of an elliptic curve. Therefore, again by classical results \cite[Prop. 2.6]{SD}, $L$ and $H-L$  are represented by an irreducible curve whenever they are globally generated, giving the vanishings $h^1(L)=h^1(H-L)=0$. In the one remaining case,
	they have the form $D+\Gamma$ with $D$ represented by an irreducible curve and $\Gamma$ an irreducible curve intersecting $D$ in one point. Hence, $h^1(D+\Gamma)=0$ in this case as well, due to $1$-connectedness. A similar reasoning shows that $H$ is globally generated and represented by an irreducible curve. Hence, by Bertini and the first part of the lemma, there are smooth irreducible curves $C \in |H|$, and $|\O_C(L)|$ is a $\g{r}{d}$.
\end{proof}

In the notation of \cite{ABS,Haburcak}, let $\pi:\P_g \to \K_g$ be the universal smooth hyperplane section, whose fiber above $(S,H)$ is the set of smooth irreducible curves in $|H|$, and let $\phi:\P_g \to \M_g$ be the forgetful map. Also denote by $\pi: \P^r_{g,d}\to \K^r_{g,d}$ the restriction to the divisor $\K^r_{g,d}\subset \K_g$. By Lemma~\ref{lemma:restrict_grd}, the image of $\phi$ restricted to $\P^r_{g,d}$ lies in $\BN{g}{r}{d}$. We summarize this in the following diagram.
\[
\xymatrix@R=7pt@C=16pt{
	& \mathcal{P}^r_{g,d} \ar[rd]^(0.45){\phi} \ar[ld]_(0.45){\pi} & \\
	\mathcal{K}^r_{g,d} & & \mathcal{M}^r_{g,d}
}
\]

Our proof of Theorem~\ref{thm:conj_intro} will show that when restricted to a general element of $\K^r_{g,d}$, the image of $\phi$ consists of $\g{r}{d}$-general curves.
	
	\subsection{Bundles on K3 surfaces}\label{subsec:bun k3}
	Let $S$ be a K3 surface and $C\subset S$ a smooth irreducible curve of genus $g$. A base point free $\g{r}{d}$ on $C$ gives rise to a \emph{Lazarsfeld--Mukai bundle} on $S$. Denoting by $(\L, V)$ the $g^{r}_{d}$, where $\L$ is a line bundle of degree $d$ on $C$ and $V \subset H^0(\L)$ is an $(r+1)$-dimensional subspace, the Lazarsfeld--Mukai bundle $E_{C,(\L,V)}$ is defined as the dual of the kernel of the evaluation map $V\otimes \O_S \twoheadrightarrow \L$. We will use the following well-known facts about $E_{C,(\L,V)}$, cf. \cite{Laz}.
	
	\begin{itemize}
		\item $\rk(E_{C,(\L,V)})=r+1$, $c_1(E_{C,(\L,V)}) = [C]$, $c_2(E_{C,(\L,V)})=d$;
		\item $h^2(E_{C,(\L,V)})=0$;
		\item $E_{C,(\L,V)}$ is globally generated off a finite set;
		\item $\chi(E_{C,(\L,V)}^\vee \otimes E_{C,(\L,V)})=2(1-\rho(g,r,d))$. In particular, if $\rho(g,r,d)<0$, then $E_{C,(\L,V)}$ is non-simple.
	\end{itemize}

	We conclude with a technical lemma which will be useful.
	\begin{lemma} \label{lemma:c1=0}
		Let $\E$ be a torsion free coherent sheaf on a K3
		surface $S$ that is globally generated off a finite set such that $H^2(\E)=0$ and $c_1(\E)=E$, where $E$ is a smooth elliptic curve. Then $\rk \E=1$ and $\E^{\vee\vee} \cong \O_S(E)$.
	\end{lemma}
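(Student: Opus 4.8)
The plan is to reduce to the reflexive hull of $\E$ and then exploit the geometry of the elliptic pencil $|E|$.

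First I would replace $\E$ by $\G := \E^{\vee\vee}$. Since $S$ is a smooth surface, a reflexive sheaf is locally free, so $\G$ is a vector bundle with the same rank as $\E$ and $c_1(\G)=E$, and the natural map $\E\hookrightarrow\G$ has $0$-dimensional cokernel $T$. As $H^i(T)=0$ for $i>0$, the hypothesis $H^2(\E)=0$ yields $H^2(\G)=0$; and since $\E$ and $\G$ agree off a finite set while $H^0(\E)\hookrightarrow H^0(\G)$, global generation off a finite set passes to $\G$. Because $\omega_S\cong\O_S$, Serre duality turns $H^2(\G)=0$ into $H^0(\G^\vee)=0$. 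So it suffices to prove the statement for the vector bundle $\G$.

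Next I would use that $E$ is a smooth elliptic curve, so $E^2=0$ and $|E|$ is a base-point-free pencil defining an elliptic fibration $S\to\PP^1$ whose members are mutually disjoint. Writing $r=\rk\G$ and choosing $r-1$ general global sections of $\G$ (there are at least $r$, as $\G$ is globally generated off a finite set), they fail to be independent only in codimension $2$ and produce an exact sequence
\[ 0 \to \O_S^{\,r-1} \to \G \to \I_W\otimes\O_S(E) \to 0, \]
where $W$ is a $0$-dimensional subscheme; here $\det\G\cong\O_S(E)$ since line bundles on $S$ are determined by their first Chern class ($h^1(\O_S)=0$). Being a quotient of $\G$, the rank-one sheaf $\I_W\otimes\O_S(E)$ is globally generated off a finite set by its own global sections. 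The crux is to show this forces $W=\emptyset$: its global sections are exactly the members of $|E|$ passing through $W$, and since distinct members are disjoint, either $W$ meets two fibers --- whence there are no sections at all, which is impossible for a nonzero sheaf generated off a finite set --- or $W$ lies on a single fiber $F_0$, in which case the only available section vanishes along all of $F_0$ and hence cannot generate the sheaf along the curve $F_0\setminus W$. This dichotomy, where ``globally generated off a finite set'' combines with $E^2=0$, is the step I expect to be the main obstacle.

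Finally, with $W=\emptyset$ I have $0\to\O_S^{\,r-1}\to\G\to\O_S(E)\to 0$; dualizing (all terms are now locally free) gives $0\to\O_S(-E)\to\G^\vee\to\O_S^{\,r-1}\to 0$. Taking cohomology and using $H^0(\O_S(-E))=0$ together with $H^1(\O_S(-E))=H^1(\O_S(E))^{*}=0$ (Riemann--Roch, $E^2=0$), I obtain $H^0(\G^\vee)\cong\CC^{\,r-1}$. Since $H^0(\G^\vee)=0$, this forces $r=1$. Then $\G$ is a line bundle with $c_1(\G)=E$, so $\G\cong\O_S(E)$ by injectivity of $c_1$ on $\Pic(S)$, giving $\E^{\vee\vee}\cong\O_S(E)$. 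Apart from the $W=\emptyset$ step, everything reduces to routine bookkeeping with Serre duality and Riemann--Roch on the K3 surface.
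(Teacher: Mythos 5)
Your proof is correct, and it agrees with the paper's for the first half while finishing by a genuinely different route. Like the paper, you reduce to the locally free double dual, take $\rk\E-1$ general sections to get $0 \to \O_S^{\,r-1} \to \E^{\vee\vee} \to \I_W\otimes\O_S(E) \to 0$, and use the fact that distinct members of the elliptic pencil $|E|$ are disjoint to force $W=\emptyset$; indeed your case analysis (no sections if $W$ is not contained in a single member, and a single section vanishing along a whole fiber otherwise, either way contradicting generation off a finite set) is exactly the content behind the paper's terse assertion that $h^0(\O_S(E)\otimes\I_Z)\geq 2$, whence $Z=\emptyset$. Where you diverge is the endgame: the paper uses $W=\emptyset$ only to conclude that $\E^{\vee\vee}$ is globally generated, then builds a \emph{second} evaluation sequence with $\rk\E$ general sections dropping rank along a smooth member of $|E|$, producing a line bundle $\A$ on that elliptic curve, showing $\A\cong\O_E$ from global generation of both $\A$ and its dual, and finally bounding $\rk\E^{\vee\vee}\leq h^0(\O_E)=1$ via $h^0(\E^\vee)=0$. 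You instead dualize the (now locally free) sequence directly to $0\to\O_S(-E)\to(\E^{\vee\vee})^\vee\to\O_S^{\,r-1}\to 0$ and compute $h^0((\E^{\vee\vee})^\vee)=r-1$ from $h^0(\O_S(-E))=h^1(\O_S(-E))=0$, which contradicts $h^0((\E^{\vee\vee})^\vee)=h^2(\E^{\vee\vee})^\vee=0$ unless $r=1$. Your finish is shorter and avoids the geometry of line bundles on the elliptic member, at the modest price of invoking the standard vanishing $h^1(\O_S(E))=0$ for an elliptic pencil on a K3 (which follows from $h^0(\O_S(E))=2$, $h^2(\O_S(E))=0$, and $\chi(\O_S(E))=2$, so this is fine); both arguments pivot on the same Serre-duality input $H^2=0\Leftrightarrow h^0$ of the dual vanishes. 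The only points where you match the paper's (rather than a higher) level of rigor are the standard facts that general sections yield a torsion-free cokernel $\I_W\otimes\O_S(E)$ and that a sheaf of rank $r$ generated off a finite set has at least $r$ independent sections — both are asserted without proof in the paper as well, so there is no gap.
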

	
	\begin{proof}
		The cokernel of the canonical inclusion $\E \subset \E^{\vee\vee}$ is
		supported on a finite set, whence $H^2(\E^{\vee\vee})=0$ and $\E^{\vee\vee}$ is globally generated off a finite set. If $\rk \E=1$, there is nothing to show. If $\rk \E \geq 2$, a general subspace $W \subset H^0(\E^{\vee\vee})$ of $\dim W = \rk \E^{\vee\vee}-1$
		gives rise to an evaluation sequence
		\begin{equation} \label{eq:eval1}
			\xymatrix{
				0 \ar[r] & W \* \O_S  \ar[r] & \E^{\vee\vee} \ar[r] & \O_S(E) \* \I_Z \ar[r] & 0,}
		\end{equation}
		where $Z$ is a $0$-dimensional subscheme of $S$ of length $c_2(\E^{\vee\vee})$. Since $\E^{\vee\vee}$ is globally generated off a finite set we must have $h^0(\O_S(E) \* \I_Z) \geq 2$, whence $Z=\emptyset$. As $\O_S(E)$ is globally generated, \eqref{eq:eval1} shows that $\E^{\vee\vee}$ is also. 
		Hence, by a general position argument, for a general subspace $V \subset H^0(\E^{\vee\vee})$ of $\dim V = \rk \E^{\vee\vee}$ the evaluation map
		$V \* \O_S  \to  \E^{\vee\vee}$ drops rank along a smooth member of $|E|$, which we still denote by $E$, and  we have a short exact sequence
		\begin{equation} \label{eq:eval2}
			\xymatrix{
				0 \ar[r] & V \* \O_S  \ar[r] & \E^{\vee\vee} \ar[r] & \A \ar[r] & 0,}
		\end{equation}
		with $\A$ a line bundle on $E$. Dualizing, we obtain
		\begin{equation} \label{eq:eval3}
			\xymatrix{
				0 \ar[r] & \E^{\vee\vee\vee}\cong \E^\vee \ar[r] & V^\vee \* \O_S  \ar[r] & \omega_E \* \A^\vee  \cong \A^\vee\ar[r] & 0.}
		\end{equation}
		The sequences \eqref{eq:eval2} and  \eqref{eq:eval3} show that both $\A$ and 
		$\A^\vee$ are globally generated, whence $\A \cong \O_E$. Since $h^0(\E^\vee)=h^2(\E^{\vee\vee})=0$, we get from \eqref{eq:eval3} that $\rk \E^{\vee\vee}=\dim V \leq h^0(\O_E)=1$,  a contradiction. 
	\end{proof}
	
\section{Brill--Noether special curves on K3 surfaces} 
\label{sec:K3-basic}
		
The aim of this section is to give a necessary and sufficient criterion to determine
whether a curve on a K3 surface with a special linear series induced
by a line bundle on the surface can contain other special linear
series as well. The main result is summarized in Theorem
\ref{thm:numbers} below.
	
A crucial observation of Lazarsfeld \cite[Lemma~1.3]{Laz} is that a
Brill--Noether special curve $C$ on a K3 surface $S$ admits in its
complete linear system $|C|$ a reducible curve. More precisely, as
will also be clear from the proof of Theorem~\ref{thm:numbers},
one has an effective decomposition
	\begin{equation} \label{eq:declaz}
		C\sim  A+B+T,
	\end{equation}
	with $A$ and $B$ globally generated and nontrivial, and $T$ effective (and possibly zero). Control over the (self-)intersection numbers of components of this
	effective decomposition have been crucial technical inputs into a host
	of results concerning lifting linear systems to line bundles on K3
	surfaces and distinguishing Brill--Noether loci, see~\cite{AH,BKM,DM,lelli_rank2}.
	
	For our purposes, the following will be a convenient definition restricting effective decompositions of $H$.
	
	\begin{defn} 
		\label{def:LD} 
		Let $H$ be a divisor on a K3 surface $S$. We say that $H = A+B$ is a
		\emph{flexible decomposition} if $h^0(A) \geq 2$, $h^0(B) \geq 2$,
		$A^2 \geq -2$, $B^2 \geq -2$, and at least one of $A$ and $B$ has
		self-intersection $\geq 0$.
	\end{defn}

		 Note that on K3 surfaces containing no $(-2)$-curves the flexible decompositions are nothing but the effective, nontrivial decompositions.

		\begin{example}\label{ex:lar_decom}
			For K3 surfaces $(S,H) \in \K^r_{g,d}$ satisfying \eqref{eq:basiccond} we have that	$H  \sim  L + (H-L)$ is a  flexible  decomposition. Indeed, since $(H-L)^2= 2(g+r-d-2)\geq 0$ and $(H-L) \cdot H=2g-2-d>0$,  we get by Riemann--Roch that $h^0(H-L)  \geq 2$.  Similarly for $L$.
		\end{example}
		
		We will  henceforth  be interested in the uniqueness of  flexible 
		decompositions.	 The following is  a weakening of the condition that $|H|$ contains no reducible curves  from \cite[Theorem]{Laz}.
		
		\begin{defn} \label{def:DR}
			For a K3 surface $S$ and a divisor $H$, we say that $(S,H)$ satisfies \emph{decomposition rigidity} if $H$ admits  at most one flexible decomposition. 
		\end{defn}
	
	We  will  show in Proposition~\ref{prop:UD1} that  decomposition rigidity  is satisfied for general members $(S,H)\in\K^r_{g,d}$ for many $(g,r,d)$,  in particular for $(g,r,d)$ associated to expected maximal Brill--Noether loci  (cf.\ Lemma~\ref{lemma:basicML2} and Remark~\ref{rem:625}).

	The following lemma, that will be central in the proof of Theorem
	\ref{thm:numbers}, shows the connection between  flexible  decompositions and the decompositions of Lazarsfeld's form \eqref{eq:declaz}.
	
        \begin{lemma}\label{lem:decomp_cases}
	Let  $(S,H)\in\K^r_{g,d}$ satisfying \eqref{eq:basiccond} and  decomposition rigidity.   If we can write $H  \sim  A+B+R$, with
	$A,B,R$  effective and nontrivial, and $A^2 \geq 0$, $B^2 \geq 0$, then 
	either
	\begin{enumerate}[label={\normalfont(\alph*)}]
		\item  $g+4r-2d =4$ and  $r>1$,  or 
		\item   $r=1, \; g=2d, \; A= B = L, \; R^2=-2$, $h^0(R)=1$ and $L+R$ is nef.
	\end{enumerate}
\end{lemma}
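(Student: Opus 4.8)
The plan is to compare every way of regrouping $A+B+R$ into two effective summands with the unique flexible decomposition $H\sim L+(H-L)$ supplied by Example~\ref{ex:lar_decom} and decomposition rigidity (Definition~\ref{def:DR}). Write $M:=H-L$, so that $L^2=2r-2\geq 0$ and $M^2=2(g+r-d-2)\geq 0$ by \eqref{eq:basiccond}, and recall that $H$ is primitive. I will use three elementary facts: an effective nontrivial $D$ with $D^2\geq 0$ has $h^0(D)\geq 2$ (Riemann--Roch with $h^2(D)=h^0(-D)=0$); $h^0$ is nondecreasing under adding an effective divisor; and two effective classes of nonnegative self-intersection meet nonnegatively (both lie in the closure of the positive cone). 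In particular $(A+B)^2\geq 0$, and in each grouping $\{X,Y\}$ of $H$ that I form, one summand always has nonnegative square and the other contains $A$ or $B$; so by the first two facts the only condition left to check for flexibility in the sense of Definition~\ref{def:LD} is that both squares are $\geq -2$.

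\textbf{Step 1: $R^2\leq -2$.}
Suppose instead $R^2\geq 0$. Then $A,B,R$ are all effective of nonnegative self-intersection, so the three groupings $\{A,\,B+R\}$, $\{B,\,A+R\}$ and $\{A+B,\,R\}$ are all flexible; by decomposition rigidity each equals $\{L,M\}$, forcing $A,B,A+B\in\{L,M\}$. But no sum of two elements of $\{L,M\}$ lies in $\{L,M\}$ (since $H$ primitive excludes $H=3L$ and $2H=3L$, and $L+M=H\notin\{L,M\}$), a contradiction. Hence $R^2\leq -2$.

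\textbf{Step 2: a flexible decomposition, and the dichotomy.}
From $(A+R)^2+(B+R)^2=A^2+B^2+2R\cdot H\geq 0$, at least one grouping, say $\{A+R,\,B\}$, has first square $\geq 0$ and is therefore flexible; by rigidity $\{A+R,B\}=\{L,M\}$, so $N:=A+R\in\{L,M\}$ and $B$ is the other element. If moreover $(B+R)^2\geq -2$, then $\{A,\,B+R\}$ is flexible as well, giving $A\in\{L,M\}$, hence $A=B=H-N$ and $R=H-2A$. Otherwise $(B+R)^2\leq -4$; writing $B+R=H-A$ with $A^2\geq 0$ yields $A\cdot H\geq g+1+\tfrac12 A^2$, while $A\leq N$ forces $A\cdot H\leq N\cdot H$. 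For $N=L$ this gives $g+1\leq d\leq g-1$, impossible, so $N=M$ and $B=L$. In this last situation I would pick a $(-2)$-curve $\Gamma$ in the support of $R$ (hence of $M$) meeting $R$ negatively and reflect: carrying out the reflection exhibits $\{M-\Gamma,\,L+\Gamma\}$ as a flexible decomposition of $H$, so by rigidity it equals $\{L,M\}$, forcing $\Gamma=M-L=H-2L$ and thus $(H-2L)^2=\Gamma^2=-2$; any other outcome would produce a \emph{second} flexible decomposition, contradicting Definition~\ref{def:DR}. In the branch $A=B$ one has $R=H-2A$ with $A\in\{L,M\}$, so $R^2=(H-2L)^2$; the value $(H-2L)^2\geq 0$ is excluded by comparing $\{L,M\}$ with the flexible $\{2A,\,H-2A\}$ (distinct since $2A\notin\{L,M\}$ by primitivity), and $(H-2L)^2\leq -4$ is excluded by the same $(-2)$-curve reflection. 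Either way one obtains $(H-2L)^2=-2$, equivalently $g+4r-2d=4$.

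\textbf{Step 3: the cases, and the main obstacle.}
If $r\geq 2$ this is conclusion~(a). If $r=1$ then $L^2=0$, so $L$ is a base-point-free (nef) elliptic pencil; the option $A=B=M$ is impossible because $H-2M=2L-H$ has nonpositive $H$-degree once $d\geq 2$, leaving $A=B=L$ and $R=H-2L$ with $R^2=-2$, whence $g=2d$. That $L+R=M$ is nef follows from the reflection argument: a $(-2)$-curve $\Gamma$ with $M\cdot\Gamma<0$ would give $\{L+\Gamma,\,M-\Gamma\}\neq\{L,M\}$ unless $\Gamma=H-2L$, which is incompatible with $M\cdot(H-2L)=d-2\geq 0$. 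Likewise $h^0(R)=1$, since a moving part of $R$ of nonnegative self-intersection would again yield a competing flexible decomposition. This is conclusion~(b). The main obstacle throughout is the control of effective classes of very negative self-intersection: the crux is executing the $(-2)$-curve reflections (ensuring the relevant summands retain $h^0\geq 2$ and square $\geq -2$) so as to manufacture a second flexible decomposition, thereby forcing the \emph{exact} value $(H-2L)^2=-2$ rather than merely $(H-2L)^2<0$.
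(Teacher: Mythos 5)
Your Steps 0--1 and the first half of Step 2 are correct, and the identity $(A+R)^2+(B+R)^2=A^2+B^2+2R\cdot H\geq 0$ is a genuinely slick shortcut to the paper's intersection estimates. But there is a real gap at exactly the point you flag as ``the crux'': in the branch $(B+R)^2\leq -4$ (and again for $R^2\leq -4$ in the branch $A=B$) you assert that a single reflection in a $(-2)$-curve $\Gamma\subseteq\operatorname{Supp}(R)$ with $\Gamma\cdot R<0$ ``exhibits $\{M-\Gamma,\,L+\Gamma\}$ as a flexible decomposition,'' but flexibility is never verified and can fail. Nefness of $H$ only yields $\Gamma\cdot(A+B)\geq 1$, i.e., $\Gamma$ meets $A$ \emph{or} $B=L$ positively; if $\Gamma\cdot L\leq 0$ you have no control on $(L+\Gamma)^2$, and $(M-\Gamma)^2=M^2-2M\cdot\Gamma-2$ can be arbitrarily negative when $A\cdot\Gamma$ is large. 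Worse, since Step 1 only gives $R^2\leq -2$, the sub-case where $R$ is itself an irreducible $(-2)$-curve is allowed, and there your ``reflected'' pair $\{M-\Gamma,\,L+\Gamma\}$ is literally the original pair $\{A,\,B+R\}$ with $(B+R)^2\leq -4$, which is not flexible, so rigidity gives you nothing. Note the contrast with your Step 3 argument for nefness of $L+R$, which \emph{is} correct: there $\Gamma\cdot M<0$ together with nefness of $H=L+M$ forces $\Gamma\cdot L\geq 1$, precisely the positivity you lack in Step 2. The paper closes the hole by a different mechanism: it iteratively peels $\Gamma$ off $R$ and adds it to whichever of $A,B$ meets $\Gamma$ positively, so that both squares stay $\geq 0$ and one eventually lands in the already-settled case $R'^2=-2$; crucially, it then identifies the endpoint with the original decomposition via $0=L^2=(A+\Gamma)^2=A^2+2(\Gamma\cdot A-1)$, forcing $\Gamma\cdot(A+\Gamma)=-1$ and hence $\Gamma\cdot H=\Gamma\cdot R<0$, contradicting nefness.

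That endpoint identification is not cosmetic: even granting your reflection, in the branch $(B+R)^2\leq -4$ with $r=1$ you would only conclude $\Gamma=H-2L$ and $g+4r-2d=4$, whereas conclusion (b) asserts $A=B=L$ for the \emph{original} decomposition; since $A=L$ would force $(B+R)^2=(H-L)^2\geq 0$, this branch must be shown vacuous for $r=1$, which your argument never does. A smaller, repairable point: your justification of $h^0(R)=1$ via a moving part of $R$ does not obviously produce a flexible decomposition (the complementary summand $2L+\Delta_0$ need not have square $\geq -2$); the quick fix is the paper's: if $h^0(R)\geq 2$ then $\{2L,\,R\}$ is already flexible (squares $0$ and $-2$, both sections spaces of dimension $\geq 2$), so rigidity forces $H\sim 3L$, contradicting primitivity of $H$.
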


\begin{proof}
	
	As $(S,H)$ satisfies  decomposition rigidity, Example~\ref{ex:lar_decom} shows that $H  \sim  L+(H-L)$ is the unique  flexible  decomposition. Also  note that $h^0(A) \geq 2$ and $h^0(B) \geq 2$. 
	
	We  first treat the case $R^2 \geq -2$.
	
	We start by proving that
	\begin{equation}
		\label{eq:intwAB}
		R \cdot A \geq -1 \;\; \mbox{and} \;\; R \cdot B \geq -1.     
	\end{equation}
	Indeed, say $R \cdot A \leq -2$. Then $R^2=-2$ and nefness of $H$ implies that $R \cdot B \geq 4$ and one easily computes that $(A-jR)^2 \geq 0$ for $j=1,2$ and $(B+jR)^2>0$ for $j=1,2,3$. Hence,
	\[ H  \sim  A+(B+R)  \sim  (A-R)+(B+R)  \sim  (A-2R)+(B+3R) \]                
	are all  flexible  decompositions, contradicting   decomposition rigidity.  
	Hence, $R \cdot A \geq -1$, and similarly $R \cdot B \geq -1$, proving \eqref{eq:intwAB} 
	
	By \eqref{eq:intwAB}, the two decompositions
	\[ H  \sim  (A+R)+B  \sim  A+(R+B)  \]
	are  flexible, whence   decomposition rigidity   yields that $A  \sim  B$, and both are linearly equivalent to $L$ or $H-L$, so that $R  \sim  H-2A  \sim  \pm(H-2L)$. If $R^2 \geq 0$ or more generally if $h^0(R) \geq 2$, then also $H  \sim  2A+R$ is a  flexible  decomposition, whence   decomposition rigidity   yields that $R  \sim  A$, so that $H  \sim  3A$,   contradicting that primitivity of $H$.  Hence it remains to treat the case $h^0(R)=1$ and $R^2=-2$. The latter is equivalent to $g+4r-2d =4$. Hence, if $r>1$, we are in case  (a). If $r=1$, then $g=2d$ and we have $H  \sim  2A+R$, with $A  \sim  L$ or $A  \sim  H-L$. In the latter case we would have $2L  \sim  H+R$, whence
	\[ g=2d=2L \cdot H =H^2+R \cdot H=2g-2+R \cdot H,\]
	so that $R \cdot H =2-g<0$ (as $g \geq 3$), contradicting nefness of $H$. Hence, $A  \sim  L$ and $H  \sim  2L+R$, which is case  (b), where we have left to prove that $L+R$ is nef. If it were not, there would exist an irreducible $(-2)$-curve $\Gamma$ such that $\Gamma \cdot (L+R)<0$. Since $H$ is nef, we must have $\Gamma \cdot L>0$ and $\Gamma \cdot R \leq -2$, whence $(R-\Gamma)^2 \geq 0$, yielding the contradiction $h^0(R) \geq h^0(R-\Gamma) \geq 2$.
	
	We next treat the case $R^2 <-2$. Since $R$ is effective, it must contain a $(-2)$-curve $\Gamma$ in its support such that $\Gamma \cdot R<0$, and $R-\Gamma$ is still effective and nontrivial. Since $H$ is nef, we must have $\Gamma \cdot (A+B) \geq 1$, and we can without loss of generality assume that $\Gamma \cdot A \geq 1$. Then $(A+\Gamma)^2 \geq 0$ and we may replace the decomposition $H  \sim  A+B+R$ with the decomposition $(A+\Gamma)+B+(R-\Gamma)$. Repeating the process we will eventually reach a decomposition of the form $H  \sim  A'+B'+R'$, with $A',B',R'$ effective and nontrivial, ${A'}^2 \geq 0$, ${B'}^2 \geq 0$ and ${R'}^2=-2$. This reduces us to the case treated above, which means that we are either in case   (a)  again, or $r=1$, $A'  \sim  B'  \sim  L$ and $h^0(R')=1$. We claim that
	$A  \sim  A'$ and $B  \sim  B'$ (so that we end up in case  (b)  again). Indeed, assume by abuse of notation that at the last but one step of the procedure we have $H  \sim A+B+R$, and there is a  
	$(-2)$-curve  $\Gamma$  such that $\Gamma \cdot R<0$, $R-\Gamma$ is still effective and nontrivial and $\Gamma \cdot A>0$. Then $A' \sim A+\Gamma$, $B' \sim B$ and $R' \sim R-\Gamma$.
	Since $0=L^2={A'}^2=(A+\Gamma)^2=A^2+2(\Gamma \cdot A-1)$, we must have $A^2=0$ and $\Gamma \cdot A=1$. Hence $\Gamma \cdot A'=-1$, so that
	\[ \Gamma \cdot H=\Gamma \cdot(2A'+R')=\Gamma \cdot(2A'+R-\Gamma)=\Gamma \cdot R
	<0,\]
	contradicting nefness of $H$.
\end{proof}	
	
	The main result of this section is the following. 

\begin{thm} \label{thm:numbers}
	Let  $(S,H)\in\K^r_{g,d}$ satisfying \eqref{eq:basiccond} and  decomposition rigidity.  Assume further that if $r>1$, then $g+4r-2d \neq 4$.
	Then $|H|$ contains smooth irreducible curves of genus $g$, and for any smooth irreducible $C \in |H|$, the linear system $|\O_C(L)|$ is a base point free complete $g^r_d$, which is very ample if $r \geq 3$. Moreover, there exists a smooth irreducible curve in $|H|$ carrying a $g^{r'}_{d'}$ with $\rho(g,r',d') <0$ if and only if there exist non-negative integers $r_1,r_2,d_1,d_2$ such that
	\begin{eqnarray}
		\label{eq:condi1} r_1+r_2 & = & r'-1 \\
		\label{eq:condi2} d_1+d_2 & \leq & d'-d+2r-2 \\
		\label{eq:condi3} 0 & \leq  & \rho(r,r_1,d_1) <r \\
		\label{eq:condi4} 0 & \leq  & \rho(g+r-d-1,r_2,d_2) <g+r-d-1. 
	\end{eqnarray}
\end{thm}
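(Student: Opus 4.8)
The plan is to treat the three assertions in turn. For the geometric statements---that $|H|$ has smooth irreducible members and that $|\O_C(L)|$ is a complete base point free $\g{r}{d}$, very ample when $r\geq 3$---I would invoke Lemma~\ref{lemma:restrict_grd}, which reduces matters to the vanishings $h^1(L)=h^1(H-L)=0$ together with global generation and, for $r\geq 3$, very ampleness. Each of these can be read off from the classical criteria of Saint-Donat \cite{SD}, using decomposition rigidity to exclude the obstructing $(-2)$- and elliptic-pencil configurations: any such configuration would furnish $H$ with a second flexible decomposition, exactly as in the proof of Lemma~\ref{lem:decomp_cases}.

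The heart of the matter is the equivalence, and I would prove the forward direction first. Suppose a smooth $C\in|H|$ carries a $\g{r'}{d'}$ with $\rho(g,r',d')<0$; after subtracting base points we may assume it is base point free, since this only decreases $d'$ (hence preserves $\rho<0$) and weakens \eqref{eq:condi2}. Form the Lazarsfeld--Mukai bundle $E$ on $S$, with $\rk E=r'+1$, $c_1(E)=H$, $c_2(E)=d'$, globally generated off a finite set and with $h^2(E)=0$; since $\rho<0$, it is non-simple. Saturating a non-scalar endomorphism in the manner of Lazarsfeld and Donagi--Morrison \cite{Laz,DM} produces an exact sequence $0\to F\to E\to G\to 0$ with $F,G$ torsion-free and globally generated off finite sets, and with $c_1(F),c_1(G)$ effective, nontrivial, and summing to $H$. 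I would then argue that $c_1(F)+c_1(G)$, after reabsorbing any fixed $(-2)$-curves, is a flexible decomposition of $H$, so that decomposition rigidity together with the hypothesis $g+4r-2d\neq 4$ (ruling out case (a) of Lemma~\ref{lem:decomp_cases}) forces $\{c_1(F),c_1(G)\}=\{L,H-L\}$ up to relabeling; the exceptional $r=1$ configuration of case (b) requires a minor separate argument along the same lines, noting that there $H-L\sim L+R$ again has the predicted genus.

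With the decomposition pinned down, say $c_1(F)=L$ and $c_1(G)=H-L$, I would restrict to the components. A general smooth curve in $|L|$ has genus $r$ and one in $|H-L|$ has genus $g+r-d-1$, and these curves are Brill--Noether--Petri general by Lazarsfeld's theorem; the bundles $F$ and $G$ induce on them Lazarsfeld--Mukai bundles of a $\g{r_1}{d_1}$ and a $\g{r_2}{d_2}$ respectively, where $\rk F=r_1+1$ and $\rk G=r_2+1$. Petri-generality forces $\rho(r,r_1,d_1)\geq 0$ and $\rho(g+r-d-1,r_2,d_2)\geq 0$, the lower bounds of \eqref{eq:condi3}--\eqref{eq:condi4}; the strict upper bounds are equivalent to $h^1>0$ of the two aspects, i.e.\ to their being \emph{special}, which reflects that $c_1(F)=L$ and $c_1(G)=H-L$ restrict to the canonical classes of the components by adjunction. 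Finally, $\rk F+\rk G=\rk E$ gives \eqref{eq:condi1}, and the Whitney identity $c_2(F)+c_2(G)+c_1(F)\cdot c_1(G)=c_2(E)=d'$, with $c_1(F)\cdot c_1(G)=L\cdot(H-L)=d-2r+2$ and $d_1+d_2\leq c_2(F)+c_2(G)$, gives \eqref{eq:condi2}.

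For the converse I would reverse this construction. Given $r_i,d_i$ satisfying \eqref{eq:condi1}--\eqref{eq:condi4}, Brill--Noether existence on the Petri-general curves in $|L|$ and $|H-L|$ supplies special $\g{r_i}{d_i}$'s; assembling their Lazarsfeld--Mukai bundles into a suitable nonsplit extension $0\to F\to E\to G\to 0$ on $S$ yields a bundle of rank $r'+1$ with $c_1=H$ whose associated linear series restricts, on a general smooth $C\in|H|$, to a $\g{r'}{d'}$ with the predicted invariants; the slack in \eqref{eq:condi2} is taken up by adding base points, and speciality of the aspects guarantees the extension is nonsplit and that the resulting series has $\rho(g,r',d')<0$. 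The main obstacle is the forward extraction: producing the saturated destabilizing sequence so that its first Chern classes genuinely refine the unique flexible decomposition rather than a degenerate configuration, and then verifying that the restricted bundles compute honest Brill--Noether-general special series on the two components with the sharp count $r_1+r_2=r'-1$---in effect controlling rank jumps of the limit series along $L\cap(H-L)$. Establishing Petri-generality of the two components and the two-way transfer of speciality across the construction is where the genuine work lies.
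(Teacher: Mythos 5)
Your overall skeleton matches the paper's: form the Lazarsfeld--Mukai bundle of the base point free part, use $\rho<0$ to get non-simplicity, extract a Lazarsfeld-type destabilizing sequence, and use decomposition rigidity (with Lemma~\ref{lem:decomp_cases}, and Lemma~\ref{lemma:c1=0} for the torsion case (b)) to pin $\{c_1(\F),c_1(\G)\}=\{L,H-L\}$; the rank and $c_2$ computations then give \eqref{eq:condi1}--\eqref{eq:condi2} exactly as in the paper. But the step you yourself flag as ``where the genuine work lies'' is precisely where your proposed mechanism fails, and it is where the paper does something different. You propose restricting $\F$ and $\G$ to general smooth members of $|L|$ and $|H-L|$ and invoking Petri-generality of those curves. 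Two problems: first, the rank-one quotient obtained by evaluating $\rk\F$ general sections of $\F$ lives on a specific degeneracy curve $D\in|c_1(\F)|$, which is reduced and irreducible but need not be smooth and cannot be chosen to be a general member of $|L|$ --- this is why the paper works with torsion-free rank-one sheaves on possibly singular curves, following G\'omez \cite{Go}, rather than with linear series on smooth curves. Second, $\F$ and $\G$ need not be Lazarsfeld--Mukai bundles of anything: $h^1(\F)$ can be positive, so the complete series attached to the adjoint sheaf $\B_\F$ has dimension $r_\F+h^1(\F)$, not $r_\F$, and at degree $d_\F=c_2(\F)$ the sharp count \eqref{eq:condi1} and the upper bound in \eqref{eq:condi3} can fail. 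The paper repairs this by twisting $\B_\F$ down by $h^1(\F)$ general smooth points of $D$, producing a genuine Lazarsfeld--Mukai bundle $\F'$ with $c_2(\F')=d_\F-h^1(\F)\leq d_\F$; the lower bound $\rho(g_\F,r_\F,d'_\F)\geq 0$ then comes \emph{not} from Petri-generality of a smooth curve but from re-running Lazarsfeld's non-simplicity argument on $\F'$, using that decomposition rigidity forbids decompositions of $L$ and $H-L$ into moving classes, while the upper bound $\rho<g_\F$ comes from $h^1(\B'_\F)=h^0(\A_\F)>0$. Your sketch contains neither the twisting-down step nor a substitute for it, so the ``only if'' direction is not established.

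In the converse direction your approach is close to the paper's but adds an unjustified complication: the paper simply takes the direct sum $\E=\F\oplus\G$, whereas you insist on a ``suitable nonsplit extension'' with nonsplitness ``guaranteed by speciality of the aspects'' --- nonsplitness plays no role in the degeneracy construction and there is no reason speciality would force $\Ext^1(\G,\F)\neq 0$. What is genuinely needed, and what you omit, is the preliminary massaging of the chosen special series on the component curves so that both the series and its Serre adjoint are base point free (the paper's $\overline{\A}_i$ and $l_i$ bookkeeping): this is what makes $\F$ and $\G$ globally generated, which in turn lets the evaluation map cut out a smooth $C\in|H|$ carrying a $\g{r'+l_1+l_2}{d'+l_1+l_2}$, whence a $\g{r'}{d'}$. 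You must also handle the degenerate cases $r_1=0$ or $r_2=0$ by substituting the line bundles $L$ or $H-L$ as direct summands. These converse-direction issues are fixable, but together with the missing twisting argument in the forward direction they leave a real gap rather than a complete proof.
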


\begin{proof}
	
As $(S,H)$ satisfies  decomposition rigidity, Example~\ref{ex:lar_decom} shows that $H  \sim  L+(H-L)$ is the unique  flexible  decomposition. Lemma~\ref{lem:decomp_cases}  and our assumptions imply  that
	$L$ and $H-L$ admit no  nontrivial  effective decomposition with at least
	one summand of non-negative self-intersection except in the case $r=1$, $g=2d$, $H -L  \sim  L+R$, with $R^2=-2$, $h^0(R)=1$ and $L+R$ nef. Therefore, well-known results on linear systems on K3 surfaces \cite[Corollary~3.2,~\S2.7]{SD} imply that $L$ and $H-L$ are globally generated and the general members of $|L|$ and $|H-L|$ are smooth irreducible curves of genus $g(L)=r \geq 1$ and $g(H-L)=g+r-d-1 \geq 1$, respectively, and that $L$ is even very ample if $r \geq 3$. Similarly, the general member of $|H|$ is a smooth, irreducible curve of genus $g$. In particular, $h^1(L)=h^1(H-L)=0$, whence $|\O_C(L)|$ is a base point free complete $g^r_d$ for any smooth $C \in |H|$ by Lemma~\ref{lemma:restrict_grd}, and it is even is very ample if $r \geq 3$ (as $L$ is).
	
	We have left to prove the last statement of the proposition. We first prove the ``only if''-part.
	
	Assume that  there exists a smooth curve $C \in |H|$ carrying a $g^{r'}_{d'}$ with $\rho(g,r',d') <0$. Its base point free part is a $g^{r'}_{d''}$ with $d'' \leq d'$. 
	Let $\E$ be the Lazarsfeld--Mukai bundle associated to the base point free part.
	The fact that $\rho(g,r',d'') \leq \rho(g,r',d') <0$ implies that $\E$ is non-simple, whence we can by standard arguments find an endomorphism $\varphi: \E \to \E$ dropping rank everywhere (see~\cite[p.~302]{Laz}). Set $\F\coloneqq \im \varphi$ and $\G\coloneqq \coker \varphi$, which both have positive ranks. Being both a quotient sheaf and subsheaf of $\E$, we have that $\F$ is torsion free, globally generated off a finite set and with $h^2(\F)=0$. In particular, $\F$ is nontrivial, whence $c_1(\F)$ is represented by an effective nonzero divisor $F$ (see, e.g., \cite[Fact at the bottom of p.~302]{Laz}). Since $F$ must be globally generated off a finite set, we must have $F^2 \geq 0$. Let $\T$ be the (possibly zero) torsion subsheaf of $\G$ and $\overline{\G}\coloneqq \G/\T$. Then $\overline{\G}$ is torsion free, and, being a quotient sheaf of $\E$, it is globally generated off a finite set and satisfies $h^2(\overline{\G})=0$. As above, this implies that $c_1(\overline{\G})$ is represented by an effective nonzero divisor $G$, with $G^2 \geq 0$. We thus have
	\[ H  \sim  c_1(\E)  \sim  c_1(\F)+c_1(\G)  \sim  c_1(\F)+c_1(\overline{\G}) + c_1(\T)  \sim F+G+T,\]
	with $T$ an effective (possibly zero) divisor supported on the support of $\T$.

	Letting $\overline{\F}$ denote the kernel of the surjection $\E \to \G \to \overline{\G}$, we have a commutative diagram
	
	\begin{equation}\label{eq:diagram}
		\begin{gathered}
			\xymatrix@R=12pt{  &          & 0 \ar[d]      & 0 \ar[d]                  &           &  \\  & 0 \ar[r] & \F\ar[d]\ar[r]& \overline{\F}\ar[d]\ar[r] & \T \ar[r] & 0\\  &          & \E \ar[d]\ar@{=}[r] & \E \ar[d]           &           &  \\0 \ar[r] & \T \ar[r] & \G \ar[d]\ar[r]& \overline{\G} \ar[d] \ar[r]&0&\\  &          & 0            & 0                          &           &  \\}
		\end{gathered}		
	\end{equation}
	Note  that $\overline{\F}$ is torsion free, being a subsheaf of $\E$. The rightmost vertical sequence in the diagram, together with the facts that $\E$ is locally free and $\overline{\G}$ is torsion free, yields that $\overline{\F}$ is \emph{normal} (cf., e.g., \cite[II, Lemma 1.1.16]{OSS}), whence \emph{reflexive} (cf., e.g., \cite[II, Lemma 1.1.12]{OSS}), whence locally free. 
	
	Also note that $h^2(\overline{\F})=0$, as $h^2(\F)=0$.
	
	Consider the standard exact sequence
	\begin{equation} \label{eq:dd}
		\xymatrix{ 0 \ar[r] & \overline{\G} \ar[r]  & \overline{\G}^{\vee\vee} \ar[r] & \tau_{\overline{\G}} \ar[r] & 0,}
	\end{equation}
	where $\overline{\G}^{\vee\vee}$ is locally free and $\tau_{\overline{\G}}$ has finite support. In particular, also
	$\overline{\G}^{\vee\vee}$ is globally generated off a finite set and $h^2(\overline{\G}^{\vee\vee})=0$. Combining the  rightmost vertical  sequence in \eqref{eq:diagram} with \eqref{eq:dd}, we obtain
	\[
	\xymatrix{
		0 \ar[r] & \overline{\F} \ar[r]  & \E \ar[r] & \overline{\G}^{\vee\vee} \ar[r]
		& \tau_{\overline{\G}} \ar[r] & 0
	}
	\]
	where $\overline{\F}$ and $\overline{\G}^{\vee\vee}$ are both locally free, $\overline{\G}^{\vee\vee}$ is globally generated off a finite set, $h^2(\overline{\F})=h^2(\overline{\G}^{\vee\vee})=0$, and  $\tau_{\overline{\G}}$ supported on a finite set.
	\begin{claim}
		Also $\overline{\F}$ is globally generated off a finite set. Moreover, $c_1(\overline{\F})$ and $c_1(\overline{\G}^{\vee\vee})$ are linearly equivalent to $L$ and $H-L$.
	\end{claim}
	\begin{proof}[Proof of claim]
		If $T=0$, then $\T$ is supported on a finite set. Since $\F$ is also globally generated off finitely many points, the  upper horisontal  vertical sequence in \eqref{eq:diagram} shows that $\overline{\F}$ is again globally generated off a finite set. Moreover, $c_1(\overline{\F})$ and $c_1(\overline{\G}^{\vee\vee})$ are linearly equivalent to $L$ and $H-L$ by  decomposition rigidity.
		
		If $T \neq 0$, then the hypotheses and Lemma~\ref{lem:decomp_cases} yield that $r=1$, $g=2d$, $F \sim  G  \sim  L$ and $T \sim R \sim  H-2L$, with $R^2=-2$.  In particular, $L$ is represented by a smooth elliptic curve.  By Lemma~\ref{lemma:c1=0} we obtain that
		$\rk \overline{\G}^{\vee\vee}=\rk \G=1$ and $\rk \overline{\F} =\rk \F=1$, whence $r'=\rk \E-1=1$. Hence $\overline{\G}^{\vee\vee} \cong L$ and $\overline{\F} \cong H-L$. In particular, $\overline{\F}$ is globally generated and the claim follows.\end{proof}
	
	To simplify notation, we can therefore assume that we have an exact sequence
	\begin{equation} \label{eq:finalass}
		\xymatrix{
			0 \ar[r] & \F \ar[r]  & \E \ar[r] & \G \ar[r]
			& \tau\ar[r] & 0
		}
	\end{equation}
	with $\F$ and $\G$ both locally free of positive ranks and globally generated off finite sets, such that $h^2(\F)=h^2(\G)=0$ and  $c_1(\F)$ and $c_1(\G)$ are represented by $L$ and $H-L$, and  $\tau$ is supported on a finite set. 
	
	We set $r_{\F}\coloneqq\rk\F-1$ and $r_{\G}\coloneqq\rk \G-1$, $d_{\F}\coloneqq c_2(\F)$, $d_{\G}\coloneqq c_2(\G)$, $g_{\F}\coloneqq \frac{1}{2}c_1(\F)^2+1$ and $g_{\G}\coloneqq \frac{1}{2}c_1(\G)^2+1$. Then
	$r_{\F} \geq 0$ and $r_{\G} \geq 0$. Since $\F$ and $\G$ are globally generated off finite sets, we also have $d_{\F}\geq 0$ and $d_{\G} \geq 0$. Since $\{c_1(\F),c_1(\G)\}=\{L,H-L\}$, we have
	\begin{equation} \label{eq:ggg}
		\{g_{\F},g_{\G}\}=\{g(L),g(H-L)\}=\{r,g+r-d-1\}.
	\end{equation}
	
	From  \eqref{eq:finalass} we find
	\[ r'+1=\rk \E  =\rk\F + \rk\G=r_{\F}+1+r_{\G}+1,\]
	that is,
	\begin{equation}
		\label{eq:cond0}
		r_{\F}+r_{\G}=r'-1;
	\end{equation}
	moreover,
	\begin{eqnarray*}
		d' & \geq  &  d''=  c_2(\E)=c_2(\F)+c_2(\G)+c_1(\F) \cdot c_1(\G)+\length \tau \geq 
		d_{\F}+d_{\G}+L \cdot(H-L) \\
		& = &  d_{\F}+d_{\G}+d-2r+2,
	\end{eqnarray*}
	in other words
	\begin{equation}
		\label{eq:cond1}
		d_{\F}+d_{\G} \leq d'-d+2r-2.
	\end{equation}
	
	If $r_{\F}=0$, then $\rk \F=1$, so that $d_{\F}=c_2(\F)=0$. This implies that $\rho(g_{\F},r_{\F},d_{\F})=\rho(g_{\F},0,0)=0$. If $r_{\F} >0$, then, as $\F$ is globally generated off a finite set, we have a short exact sequence
	\begin{equation} \label{eq:standa}
		\xymatrix{
			0 \ar[r] & \CC^{r_{\F}+1} \* \O_S  \ar[r]  & \F \ar[r] & \A_{\F} \ar[r]
			& 0,
		}
	\end{equation}
	with $\A_{\F}$ a torsion free rank-one sheaf on a reduced and irreducible member $D \in |c_1(\F)|$. As $h^2(\F)=0$,  the sheaf  $\F$ is nontrivial, whence $h^0(\F) > \rk(\F)=r_{\F}+1$, so that
	\begin{equation}
		\label{eq:h0A}
		h^0(\A_{\F})>0. 
	\end{equation}
	Dualizing \eqref{eq:standa}, we obtain
	\begin{equation} \label{eq:standa2}
		\xymatrix{
			0 \ar[r] & \F^\vee \ar[r] & \CC^{r_{\F}+1} \* \O_S  \ar[r]  & \B_{\F}\coloneqq \ext^1(\A_{\F},\O_S) \ar[r]
			& 0,
		}
	\end{equation}
	again with $\B_{\F}$ a torsion free rank-one sheaf on $D$, which is globally generated by construction and satisfies $\deg(\B_{\F})=c_2(\F)=d_{\F}$. Since $h^0(\F^\vee)=h^2(\F)=0$, we see that
	$h^0(\B_{\F}) = r_{\F}+1+h^1(\F)$. Now choose $h^1(\F)$ general points $x_1,\ldots,x_{h^1(\F)}$ in the smooth locus of $D$ and set $\B'_{\F}\coloneqq \B_{\F}(-x_1-\cdots-x_{h^1(\F)})$. Then
	\begin{eqnarray*}
		h^0(\B'_{\F}) & = & h^0(\B_{\F})-h^1(\F)=r_{\F}+1, \\
		h^1(\B'_{\F}) & = & h^1(\B_{\F})=h^0(\ext^1(\B_{\F},\O_S))=h^0(\A_{\F})>0,
	\end{eqnarray*}
	using \eqref{eq:h0A} (cf. \cite[Lemmas 2.1 and 2.3]{Go}). Moreover, $\B'_{\F}$ is still globally generated, so we have a short exact sequence
	\begin{equation} \label{eq:standa3}
		\xymatrix{
			0 \ar[r] & {\F'}^\vee \ar[r] & \CC^{r_{\F}+1} \* \O_S  \ar[r]  & \B'_{\F}
			\ar[r]
			& 0,
		}
	\end{equation}
	defining a new Lazarsfeld--Mukai bundle $\F'$.
	We have $\rk(\F')=\rk(\F)=r_{\F}+1$, $c_1(\F')=c_1(\F)$ and
	$d'_{\F}\coloneqq c_2(\F') =\deg(\B'_{\F})=d_{\F}-h^1(\F) \leq d_{\F}$.
	Dualizing \eqref{eq:standa3} we obtain
	\begin{equation} \label{eq:standa4}
		\xymatrix{
			0 \ar[r] & \CC^{r_{\F}+1} \* \O_S \ar[r] & {\F'}   \ar[r]  &
			\ext^1(\B'_{\F},\O_S)
			\ar[r]
			& 0,
		}
	\end{equation}
	and since $h^0(\ext^1(\B'_{\F},\O_S))=h^1(\B'_{\F})>0$ (cf. \cite[Lemma 2.3]{Go}), we see that $\F'$ is globally generated off a finite set. 
	Recall that $D  \sim L$ or $H-L$, 
	and, as a consequence of what we said in the first lines of the proof, $L$ and $H-L$ admit no decompositions in moving classes. Hence 
	
	\[ \rho(g_{\F},r_{\F}, d'_{\F}) \geq 0,\]
	as in the proof of \cite[Lemma 1.3]{Laz}.
	At the same time, since $h^0(\B'_{\F})>0$ and $h^1(\B'_{\F})>0$, we also have
	\[ \rho(g_{\F},r_{\F}, d'_{\F})=g(D)-h^0(\B'_{\F})h^1(\B'_{\F}) < g(D)=g(\F).\]
	
	To summarize, we have in any event found an integer $d'_{\F}$ such that
	$0 \leq   d'_{\F} \leq d_{\F}$ and
	\begin{equation}
		\label{eq:rho0gf}
		0 \leq \rho(g_{\F},r_{\F}, d'_{\F}) < g_{\F}. 
	\end{equation}
	
	Similarly, considering $\G$, we find an integer $d'_{\G}$ such that
	$0 \leq   d'_{\G'} \leq d_{\G}$ and
	\begin{equation}
		\label{eq:rho0gg}
		0 \leq \rho(g_{\G},r_{\G}, d'_{\G}) < g_{\G}. 
	\end{equation}
	By \eqref{eq:cond1} we see that
	\begin{equation}
		\label{eq:cond1+}
		d'_{\F}+d'_{\G} \leq d'-d+2r-2.
	\end{equation}
	Recalling \eqref{eq:ggg}, we see that \eqref{eq:cond0}, \eqref{eq:cond1+}  and \eqref{eq:rho0gf}-\eqref{eq:rho0gg} imply \eqref{eq:condi1}, \eqref{eq:condi2}  and \eqref{eq:condi3}-\eqref{eq:condi4}, with $\{r_1,r_2\}=\{r_{\F},r_{\G}\}$ and $\{d_1,d_2\}=\{d'_{\F},d'_{\G}\}$.
	
	We then prove the ``if''-part.
	
	Assume the existence of the integers as stated in the  theorem.  Assume first that $r_1,r_2>0$. Then, by \eqref{eq:condi3}-\eqref{eq:condi4}, on any smooth curve in $|L|$ and $|H-L|$ there exist a complete special $g^{r_1}_{d_1}$ and $g^{r_2}_{d_2}$, respectively. Denote by $\A_i$, $i=1,2$, the corresponding line bundles, and by $D_i$ the corresponding curves. Define $\A'_i$ to be the base point free part of $\A_i$, that is, the image of the evaluation map $H^0(\A_i) \* \O_{D_i} \to \A_i$, by $\B_i$ the base point free part of $\omega_{D_i} \* {\A'}_i^{-1}$, and set $\overline{\A}_i\coloneqq \omega_{D_i} \* \B_i^{-1}$. Then $h^0(\overline{\A}_i)=h^0(\A_i)+l_i$ and $\deg(\overline{\A}_i)=\deg(\A'_i)+l_i \leq d_i+l_i$, where $l_i$ is the length of the base locus of $|\omega_{D_i} \* {\A'}_i^{-1}|$. One easily checks, as e.g. in \cite[Lemma 3.1]{Go}, that both $|\overline{\A}_i|$ and  $|\omega_{D_i} \* \overline{\A}_i^{-1}|$ are base point free. Hence, we have found, on (all) smooth curves in $|L|$ and $|H-L|$, a base point free complete $g^{r_i+l_i}_{d'_i+l_i}$, with $d'_i \leq d_i$ and $l_i \geq 0$, for $i=1,2$, respectively, and such that its adjoint is also base point free. 
	
	Let $\F$ and $\G$ be the associated Lazarsfeld--Mukai bundles, of ranks $r_1+l_1+1$ and $r_2+l_2+1$, respectively, which are globally generated by the assumption on the base point freeness of the adjoint linear systems. Set $\E\coloneqq \F \+ \G$. Then
	\[ \rk(\E)=r_1+l_1+r_2+l_2+2=r'+1+l_1+l_2\] by \eqref{eq:condi1} and
	\begin{eqnarray*}
		c_2(\E) & = & c_2(\F)+c_2(\G)+c_1(\F)\cdot c_1(\G) = d'_1+l_1+d'_2+l_2+L \cdot (H-L) \\
		& = & 
		d'_1+l_1+d'_2+l_2+d-2r+2 
		\leq   d_1+l_1+d_2+l_2+d-2r+2 \\
		& \leq  & d'+l_1+l_2,
	\end{eqnarray*}
	by \eqref{eq:condi2}.
	Then, as is well-known, the evaluation map $\CC^{r'+1} \* \O_S \to \E$ drops rank along a smooth curve $C \in |H|$ and the cokernel is a line bundle $\A$ on $C$, such that $|\omega_C-\A|$ is a $g^{r'+l_1+l_2}_{c_2(\E)}$; in particular $C$ carries a $g^{r'+l_1+l_2}_{d'+l_1+l_2}$, whence also a $g^{r'}_{d'}$.
	
	Assume now that $r_1=0$ and $r_2>0$. Then by \eqref{eq:condi4} there exist a complete special $g^{r_2}_{d_2}$ on any smooth curve in $|H-L|$, and as before, we find a complete, base point free $g^{r_2+l_2}_{d'_2+l_2}$ with $d'_2 \leq d_2$ and $l_2 \geq 0$, such that its adjoint is again globally generated. Letting $\F$ be its associated Lazarsfeld--Mukai bundle of rank $r_2+l_2+1$, which again is globally generated, we set $\E\coloneqq L \+ \F$, and argue as above. Similarly if $r_2=0$ and $r_1>0$.
	
	Finally, if $r_1=r_2=0$, we set $\E=L \+ (H-L)$ and argue as before. 
\end{proof}

\begin{remark}\label{rem:tbr}
	A careful look at the proofs of Lemma~\ref{lem:decomp_cases} and Theorem~\ref{thm:numbers} shows that the condition ``$(S,H)\in\K^r_{g,d}$'' can be replaced by ``$(S,H)$ a quasi-polarized K3 surface of genus $g$ with a line bundle $L \in \Pic (S)$ satisfying $L^2=2r-2$, $L.H=d$ and $H\not \sim 3L$''. Also note that whenever $H$ is $n$-divisible in $\Pic(S)$ for an $n \geq 4$, then $(S,H)$ does not satisfy  decomposition rigidity. 
\end{remark}

	\section{ Flexible  decompositions on K3s of Picard number two} \label{sec:K3-ld}

We want to find K3 surfaces satisfying  the  conditions in Theorem~\ref{thm:numbers},  in particular decomposition rigidity.  Throughout  the section we will fix integers $(g,r,d)$ satisfying  \eqref{eq:defD_grd} and \eqref{eq:basiccond}, and a pair $(S,H)\in\K^r_{g,d}$ with $\Pic(S)=\Lambda^r_{g,d}$.  For simplicity we  write $\Delta\coloneqq \Delta^r_{g,d}$.

Assume now that $H \sim  A+B$ is a  flexible  decomposition of $H$. We may write
\[A=xH-yL \;\; \mbox{and}\;\;  B=(1-x)H+yL\;\; \mbox{for some} \;\; x,y\in\ZZ.\]
We may and will assume that $A^2 \geq 0$ and $B^2 \geq -2$. The next two lemmas give necessary conditions on $x$ and $y$.

\begin{lemma} \label{lem:firstldec}
	We have
	\begin{eqnarray}
		\label{eq:A2_2}	 & (g-1)x^2-dxy +(r-1)y^2\geq  0, & \\
		\label{eq:B2_2}  &  (g-1)(1-x)^2+d(1-x)y+(r-1)y^2\geq  -1, & \\
		\label{eq:ABH} & \frac{2(g-1)}{d}(x-1) < y < \frac{2(g-1)}{d}x. &
	\end{eqnarray} 
\end{lemma}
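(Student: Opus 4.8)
The plan is to rewrite each of the three inequalities as a condition on intersection numbers, using $A = xH - yL$, $B = (1-x)H + yL$ and the intersection matrix \eqref{eq:int_num}, that is, $H^2 = 2g-2$, $H \cdot L = d$, $L^2 = 2r-2$. Expanding directly,
\[
A^2 = 2\bigl[(g-1)x^2 - dxy + (r-1)y^2\bigr], \qquad B^2 = 2\bigl[(g-1)(1-x)^2 + d(1-x)y + (r-1)y^2\bigr].
\]
The standing assumption $A^2 \geq 0$ then reads precisely as \eqref{eq:A2_2}, and the condition $B^2 \geq -2$ from the definition of a flexible decomposition reads precisely as \eqref{eq:B2_2}. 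So the first two inequalities are immediate from the hypotheses.

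For \eqref{eq:ABH}, I would note that, since $d > 0$, the two bounds are equivalent to the strict positivity of the intersection numbers
\[
A \cdot H = 2(g-1)x - dy \qquad\text{and}\qquad B \cdot H = 2(g-1)(1-x) + dy,
\]
because $A \cdot H > 0 \iff y < \tfrac{2(g-1)}{d}x$ and $B \cdot H > 0 \iff y > \tfrac{2(g-1)}{d}(x-1)$. Since $A$ and $B$ are effective and nontrivial (both satisfy $h^0 \geq 2$) and $H$ is nef, both numbers are at least $0$; the entire content of \eqref{eq:ABH} is therefore the strictness of these inequalities.

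To obtain $A \cdot H > 0$ I would invoke the Hodge index theorem: as $H^2 > 0$, the orthogonal complement of $H$ in $\Pic(S)$ is negative definite, so $A \cdot H = 0$ together with $A \neq 0$ would force $A^2 < 0$, contradicting $A^2 \geq 0$. For $B$ this argument only settles the case $B^2 \geq 0$; the delicate case is $B^2 = -2$, where the Hodge index theorem alone yields merely $B \cdot H \geq 0$. Here I would argue that if $B \cdot H = 0$, then, as $H$ is big and nef, every irreducible component of the effective divisor $B$ also meets $H$ in $0$ and is thus one of the finitely many $(-2)$-curves contracted by $H$; hence $B$ is supported on a negative-definite configuration of $(-2)$-curves and is therefore rigid, i.e.\ $h^0(B) = 1$, contradicting $h^0(B) \geq 2$.

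The main obstacle is exactly this last point: extracting \emph{strict} positivity of $B \cdot H$ in the boundary case $B^2 = -2$. Self-intersection is not enough, and one must genuinely use the hypothesis $h^0(B) \geq 2$ built into the definition of a flexible decomposition, combined with the rigidity of effective divisors that are contracted by a big and nef class. Once this is in hand, both $A \cdot H > 0$ and $B \cdot H > 0$ follow, and these are equivalent to \eqref{eq:ABH}.
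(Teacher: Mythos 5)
Your proposal is correct and follows essentially the same route as the paper: the first two inequalities are the normalization $A^2 \geq 0$, $B^2 \geq -2$ rewritten via the intersection matrix, and \eqref{eq:ABH} is the strict positivity of $A \cdot H$ and $B \cdot H$, which the paper deduces in one line from $h^0(A), h^0(B) \geq 2$ and $H$ big and nef. Your Hodge-index argument for $A$ and the rigidity argument for the boundary case $B^2 = -2$ (an effective divisor contracted by a big and nef class is supported on a negative definite configuration, hence has $h^0 = 1$) are exactly the details the paper leaves implicit, and they are sound.
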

\begin{proof}
	The two first conditions are equivalent to $\frac{1}{2}A^2 \geq 0$ and $\frac{1}{2}B^2\geq -1$, respectively. The last condition is equivalent to $A \cdot H >0$ and $B \cdot H > 0$, which are satisfied  since $h^0(A) \geq 2$ and $h^0(B) \geq 2$ and $H$ is  big and nef. 
\end{proof}	

\begin{lemma}\label{lem:ldecPB}
	If $r=1$, then 
	\[ x =0 \;\; \mbox{and} \;\; -\frac{g}{d} \leq y <0, \;\;\;\;\; \mbox{or} \;\; x=1 \;\; \mbox{and } \;\;  0< y \leq \frac{g-1}{d}. \] 
	
	If $r\geq 2$, then either
	\begin{eqnarray} \label{eq:1K}
		&0< x \leq \max \left\{1+\frac{d}{\sqrt{\abs{\Delta}}\sqrt{g-1}} ~,~  \frac{2(r-1)g}{(d-\sqrt{\abs{\Delta}})\sqrt{\abs{\Delta}}}\right\}  \; \; \mbox{and} &   \\
		\nonumber  & \frac{2(g-1)}{d}(x-1) < y \leq \min\left\{\frac{d-\sqrt{\abs{\Delta}}}{2(r-1)}x\;,\;\frac{g}{\sqrt{\abs{\Delta}}}\right\} &\end{eqnarray} 
	or
	\begin{eqnarray} \label{eq:3K}
		&-\frac{2(r-1)g}{(d+\sqrt{\abs{\Delta}})\sqrt{\abs{\Delta}}} \leq x \leq
		0  \; \; \mbox{and} &  \\
		\nonumber  \;\; 
		&\max\left\{-\frac{g}{\sqrt{\abs{\Delta}}}\;,\; \frac{2(g-1)}{d}(x-1)\right\} \leq y \leq 
		\frac{d+\sqrt{\abs{\Delta}}}{2(r-1)}x, \;\; y \neq 0, \;\; y \neq \frac{2(g-1)}{d}(x-1).&
	\end{eqnarray}
\end{lemma}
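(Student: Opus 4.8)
The plan is to push everything into the rank-two lattice $\Lambda^r_{g,d}$ and reduce to the analysis of a single indefinite binary quadratic form. First I would record the intersection numbers coming from \eqref{eq:int_num}: with $A = xH - yL$ and $B = (1-x)H + yL$ one has $\tfrac12 A^2 = q(x,y)$ and $\tfrac12 B^2 = q(x-1,y)$, where $q(s,y) := (g-1)s^2 - dsy + (r-1)y^2$ (here $B = -((x-1)H - yL)$, which is why the same form appears shifted). Setting $P := \tfrac{2(g-1)}{d}$, the three conditions of Lemma~\ref{lem:firstldec}, namely \eqref{eq:A2_2}, \eqref{eq:B2_2} and \eqref{eq:ABH}, become exactly $q(x,y) \ge 0$, $q(x-1,y) \ge -1$, and $P(x-1) < y < Px$. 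For $r \ge 2$ the form $q$ is indefinite of discriminant $\abs{\Delta}$, with isotropic slopes $Q := \tfrac{d-\sqrt{\abs{\Delta}}}{2(r-1)} < P < Q' := \tfrac{d+\sqrt{\abs{\Delta}}}{2(r-1)}$; the whole argument then becomes plane geometry of the point $(x,y)$ relative to these three lines through the origin.

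Next I would extract the uniform bound $\abs{y} \le g/\sqrt{\abs{\Delta}}$ responsible for the constant endpoints $\pm g/\sqrt{\abs{\Delta}}$ in \eqref{eq:1K}--\eqref{eq:3K}. Since $\{H,L\}$ spans a lattice of signature $(1,1)$, a direct computation (Hodge index) gives $A^2 H^2 - (A\cdot H)^2 = -\abs{\Delta}\,y^2$ and likewise for $B$, so writing $a := A\cdot H$ and $b := B\cdot H$ (both positive, summing to $2g-2$) we get $a^2 = (2g-2)A^2 + \abs{\Delta}y^2$ and $b^2 = (2g-2)B^2 + \abs{\Delta}y^2$. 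From $A^2 \ge 0$ and $b = (2g-2) - a > 0$ one obtains $\sqrt{\abs{\Delta}}\,\abs{y} \le a < 2g-2$, and feeding this together with $B^2 \ge -2$ into $b^2 \ge \abs{\Delta}y^2 - 2(2g-2)$ and simplifying yields $\sqrt{\abs{\Delta}}\,\abs{y} \le g$; the borderline case where the lower bound on $b^2$ is vacuous is absorbed using $(g-2)^2 \ge 0$.

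For $r = 1$ the form degenerates to $q(s,y) = s\big((g-1)s - dy\big)$, and combining $q(x,y) \ge 0$ and $q(x-1,y) \ge -1$ with $A\cdot H, B\cdot H > 0$ forces $x \in \{0,1\}$ and the stated ranges by a short sign analysis. For $r \ge 2$ I would argue geometrically: $q(x,y) \ge 0$ places $(x,y)$ outside the cone spanned by the two isotropic lines, while $A\cdot H > 0$ (that is, $y < Px$) places it below the middle line; since $Q < P < Q'$ this forces $y \le Qx$ when $x > 0$ and the mirror bound $y \le Q'x$ when $x < 0$, which are the linear $y$-bounds in \eqref{eq:1K} and \eqref{eq:3K}. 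The sign of $x$ thus produces the two regimes; the lower bound $y > P(x-1)$ is literally $B\cdot H > 0$; and the exclusions $y \ne 0$, $y \ne P(x-1)$ in \eqref{eq:3K} record the nontriviality of $A,B$ and the strictness of $B\cdot H > 0$.

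The main obstacle is the bound on $x$, and here one must use the full inequality $q(x-1,y) \ge -1$ rather than its weak corollary $\abs{y} \le g/\sqrt{\abs{\Delta}}$. Because $y > P(x-1)$ already exceeds the smaller root of $q(x-1,\cdot) = -1$, the condition $q(x-1,y) \ge -1$ pins $y$ above the larger root $\tilde y_+(x-1)$; intersecting $\tilde y_+(x-1) \le y \le \min\{Qx,\, g/\sqrt{\abs{\Delta}}\}$ and solving the resulting linear inequalities for $x$ gives the maximum in \eqref{eq:1K}. The term $\tfrac{2(r-1)g}{(d-\sqrt{\abs{\Delta}})\sqrt{\abs{\Delta}}}$ is the crossover value at which $Qx = g/\sqrt{\abs{\Delta}}$ and governs the regime where the linear bound is active, while $1 + \tfrac{d}{\sqrt{\abs{\Delta}}\sqrt{g-1}}$ governs the regime where the horizontal bound is active; the negative-$x$ computation is entirely parallel and yields \eqref{eq:3K}. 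I expect the delicate point to be comparing the hyperbola $q(x-1,\cdot) = -1$ simultaneously to the isotropic line of slope $Q$ and to the horizontal line $y = g/\sqrt{\abs{\Delta}}$, and organizing the ``which constraint is active'' bookkeeping so that the estimates collapse to the two clean closed-form bounds rather than an unwieldy list of sub-cases.
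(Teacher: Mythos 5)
Your proposal is correct, and its skeleton coincides with the paper's proof: both reduce to the three conditions of Lemma~\ref{lem:firstldec}, factor \eqref{eq:A2_2} through the isotropic slopes $a_{\pm}=\frac{d\pm\sqrt{\abs{\Delta}}}{2(r-1)}$ with $a_-<\frac{2(g-1)}{d}<a_+$, dispatch $r=1$ by the same sign analysis, and treat $r\geq 2$ as plane geometry of $(x,y)$ relative to the two lines and the hyperbola $q(x-1,y)=-1$. Where you genuinely depart is in how the quantitative bounds are extracted. The paper computes the intersection points $P_1,\dots,P_4$ explicitly and then uses convexity of the hyperbola once: the feasible region lies in the pentagon cut off by the chord $P_1P_2$ (resp.\ $P_4P_3$), and the $x$- and $y$-bounds are read off simultaneously as coordinates of vertices. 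You instead derive $\sqrt{\abs{\Delta}}\,\abs{y}\leq g$ directly from the lattice identity $(A\cdot H)^2=(2g-2)A^2+\abs{\Delta}y^2$ together with $A\cdot H+B\cdot H=2g-2$, $A^2\geq 0$, $B^2\geq -2$; this computation checks out (including absorbing the vacuous-radicand case via $(g-2)^2\geq 0$), it is coordinate-free, and it even buys you the negative-side bound $x\geq -\frac{2(r-1)g}{(d+\sqrt{\abs{\Delta}})\sqrt{\abs{\Delta}}}$ for free, since $y\leq a_+x$ and $y\geq -g/\sqrt{\abs{\Delta}}$ are already contradictory past that value, with no appeal to the hyperbola at all. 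The cost sits on the positive side, where your slice-by-slice active-constraint analysis must re-import what the paper gets from convexity: one computes $q\bigl(x-1,\tfrac{2(g-1)}{d}(x-1)\bigr)=-\tfrac{(g-1)\abs{\Delta}(x-1)^2}{d^2}$, so the pinning $y\geq\tilde y_+(x-1)$ only bites for $x>1+\frac{d}{\sqrt{g-1}\sqrt{\abs{\Delta}}}$ — your unconditional phrase ``already exceeds the smaller root'' needs exactly this caveat, though you correctly identify that value as the crossover — and one must further check that $\tilde y_+$ is increasing and that $P_1$ lies on the upper branch, i.e.\ $\tilde y_+\bigl(\tfrac{2(r-1)g}{(d-\sqrt{\abs{\Delta}})\sqrt{\abs{\Delta}}}-1\bigr)=g/\sqrt{\abs{\Delta}}$. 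All of these verifications succeed (I confirmed them), so the bookkeeping you flagged as delicate does close; the trade is that your route makes the $y$-bound conceptual at the price of a fussier $x$-bound, whereas the paper's single convexity observation handles both at once.
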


\begin{proof}		
	We first treat the case $r=1$.
	
	If $x <0$, then \eqref{eq:A2_2} and the right inequality in \eqref{eq:ABH} yield
	\[ (g-1)x \leq dy < 2(g-1)x,\]
	whence the contradiction $(g-1)x>0$. 
	If $x>0$, then \eqref{eq:A2_2} and  the left inequality in \eqref{eq:ABH} yield
	\[
	2(g-1)(x-1) < dy \leq (g-1)x,
	\]
	which implies $(g-1)x < 2(g-1)$, whence $x=1$, and thus $0< y \leq \frac{g-1}{d}$. 
	
	If $x =0$, then \eqref{eq:B2_2} and  the left inequality in \eqref{eq:ABH} yield $-\frac{g}{d} \leq y <0$. 
	
	The rest of the proof will deal with the case $r\geq 2$.
	
	We first note that \eqref{eq:ABH} implies that
	\begin{equation}
		\label{eq:signs}
		\mbox{either $x \leq 0$ and $y<0$, or $x>0$ and $y>0$.}
	\end{equation}
	Letting $a_{\pm}\coloneqq \frac{d\pm \sqrt{\abs{\Delta}}}{2(r-1)}$, we note that \eqref{eq:A2_2} 
	factors as
	\begin{equation} \label{eq:A2'_2}
		(r-1)(y-a_+ x)(y- a_- x) \geq 0.
	\end{equation}
	Define the following lines in the plane:
	\[ \ell_+: \; y=a_+x, \;\;\;\;  \ell_-: \; y=a_-x, \;\;\;\;  
	\ell: \; y=\frac{2(g-1)}{d}(x-1).\]
	Since $a_+>\frac{2(g-1)}{d}>a_->0$, the conditions \eqref{eq:A2'_2} and \eqref{eq:ABH}  yield, also recalling \eqref{eq:signs}, that either
	\begin{equation}
		\label{eq:1kvad}
		x>0 \;\; \mbox{and} \;\;         0 \leq \frac{2(g-1)}{d}(x-1) < y \leq  a_-x
	\end{equation}
	(which determines a triangle bounded by the $x$-axis and the lines $\ell$ and $\ell_-$ in the first quadrant)
	or
	\begin{equation}
		\label{eq:3kvad}
		x \leq 0 \;\; \mbox{and} \;\;         \frac{2(g-1)}{d}(x-1) < y  \leq  a_+x, \;\; y <0
	\end{equation}                     
	(which determines a triangle bounded by the $y$-axis and the lines $\ell$ and $\ell_+$ in the third quadrant). Furthermore, condition \eqref{eq:B2_2} implies that the points $(x,y)$ must lie in the region containing the origin determined by the hyperbola with two branches\[
	\mathfrak{c}: \; (g-1)(1-x)^2+d(1-x)y+(r-1)y^2=-1,\]
	as shown in the following picture,  which shows the case $(g,r,d)=(14,3,13)$.
	\begin{figure*}[h]
		\centering
		\includegraphics[scale=0.8]{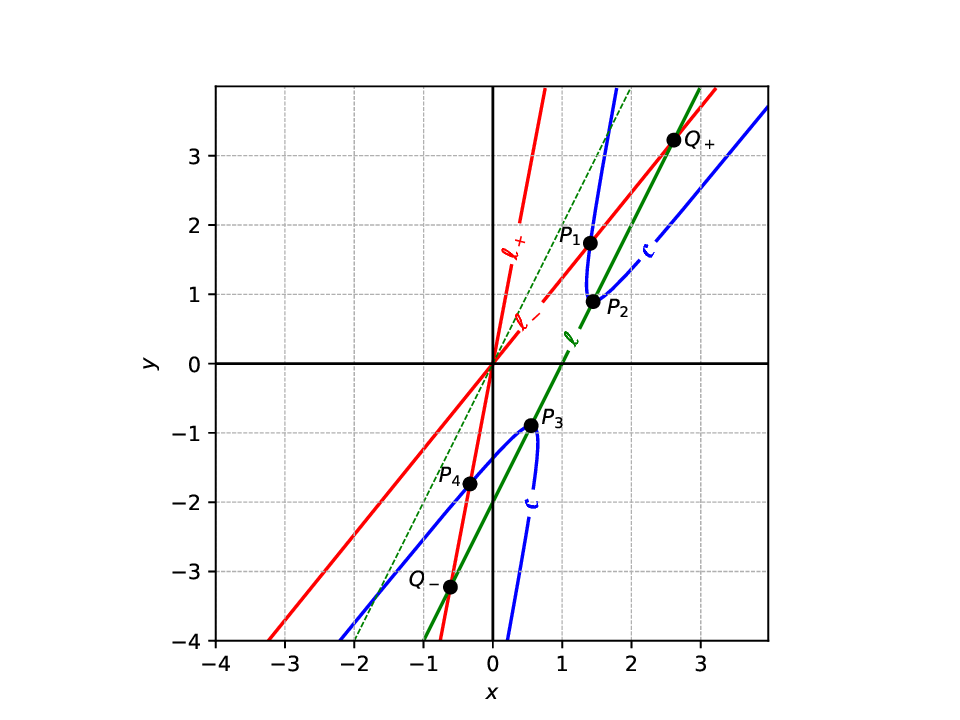}
	\end{figure*}

	We compute the intersection points between the curves, as shown in the picture:
	\begin{eqnarray*}
		Q_+=(x_+,y_+)\coloneqq \ell \cap \ell_- & = & \left(1+\frac{d}{\sqrt{\abs{\Delta}}}, \frac{2(g-1)}{\sqrt{\abs{\Delta}}}\right), \\
		Q_-=(x_-,y_-)\coloneqq \ell \cap \ell_- & = & \left(1-\frac{d}{\sqrt{\abs{\Delta}}}, -\frac{2(g-1)}{\sqrt{\abs{\Delta}}}\right), \\
		P_1=(x_1,y_1)\coloneqq\mathfrak{c} \cap \ell_- &= & \left( \frac{2(r-1)g}{(d-\sqrt{\abs{\Delta}})\sqrt{\abs{\Delta}}} ~,~ \frac{g}{\sqrt{\abs{\Delta}}} \right),\\
		P_4=(x_4,y_4)\coloneqq\mathfrak{c} \cap \ell_+ &=& \left( -\frac{2(r-1)g}{(d+\sqrt{\abs{\Delta}})\sqrt{\abs{\Delta}}} ~,~ -\frac{g}{\sqrt{\abs{\Delta}}} \right), \\
		P_2= (x_2,y_2)\coloneqq\mathfrak{c} \cap \ell \cap\{y\geq 0\} &= & \left( 1+\frac{d}{\sqrt{g-1}\sqrt{\abs{\Delta}}} ~,~ \frac{2\sqrt{g-1}}{\sqrt{\abs{\Delta}}} \right),\\
		P_3=(x_3,y_3)\coloneqq\mathfrak{c} \cap \ell \cap\{y\leq 0\} &=&\left(1-\frac{d}{\sqrt{g-1}\sqrt{\abs{\Delta}}} ~,~ -\frac{2\sqrt{g-1}}{\sqrt{\abs{\Delta}}} \right).
	\end{eqnarray*}
	
	We first consider case \eqref{eq:1kvad}. We note that $x_2 \leq x_+$ and $y_+ \geq y_1 \geq y_2$. Thus, the points $(x,y)$ lie in the region bounded by the $x$-axis, the lines $\ell_-$ and $\ell$, and the segment of $\mathfrak{c}$ between $P_1$ and $P_2$. Since $\mathfrak{c}$ is convex, the points $(x,y)$ are  contained in the pentagon bounded by the $x$-axis, the lines 
	$\ell_-$ and $\ell$, and the line segment $P_1P_2$. In particular, $x \leq \max\{x_1,x_2\}$ and $y \leq y_1$. Together with \eqref{eq:1kvad}, this yields \eqref{eq:1K}. 
	
	We next consider case \eqref{eq:3kvad}. We note that $x_- \leq x_3$ and $y_- \leq y_4 \leq y_3$. The latter implies that the intersection points $P_4$ and $P_3$ occur in the region where $\ell_+$ lies to the left of $\ell$, whence it follows that also $x_4 \leq x_3$.		(Note, however, that contrary to the impression one may get by the picture, we could have $x_3<0$.) The points $(x,y)$ lie in the region bounded by the $y$-axis, the lines $\ell_+$ and $\ell$ and the the segment of $\mathfrak{c}$ between $P_4$ and $P_3$. Since $\mathfrak{c}$ is convex, the points $(x,y)$ are  contained in the pentagon bounded by the $y$-axis, the lines 
	$\ell_+$ and $\ell$, and the line segment $P_4P_3$. In particular, $x \geq x_4$  and $y \leq y_4$. Together with \eqref{eq:3kvad}, this yields \eqref{eq:3K}. 
\end{proof}

\begin{remark} \label{obs:dircomp}
	A direct computation shows that
	\[ \frac{2(r-1)g}{(d-\sqrt{\abs{\Delta}})\sqrt{\abs{\Delta}}}-\frac{2(r-1)g}{(d+\sqrt{\abs{\Delta}})\sqrt{\abs{\Delta}}}=1+\frac{1}{g-1}.\]
	In particular, setting  $m\coloneqq\left \lfloor \max\left\{1+\frac{d}{\sqrt{\abs{\Delta}}\sqrt{g-1}} ~,~  \frac{g(2r-2)}{(d-\sqrt{\abs{\Delta}})\sqrt{\abs{\Delta}}}\right\}\right\rfloor$, Lemma~\ref{lem:ldecPB} shows that \linebreak $x \in \{-m+1,\ldots,m\}$. 
\end{remark}

As a consequence, we can deduce  decomposition rigidity  under certain assumptions.        

\begin{prop} \label{prop:UD1}
	Assume that \eqref{eq:basiccond} holds,
	\begin{equation}
		\label{eq:cliff}  d  \geq 
		\begin{cases}
			\frac{g-3}{2}+2r, &  \mbox{if} \; \;  r\geq 2, \\
			\frac{g}{2}, & \mbox{if} \; \; r=1,                                                           \end{cases}
	\end{equation}
	and 
	\begin{equation}
		\label{eq:caz} (r-1)(3g-4)^2  <  2d^2(g-2). 
	\end{equation}
	Then any $(S,H)\in\K^r_{g,d}$  with $\Pic(S)=\Lambda^r_{g,d}$ satisfies decomposition rigidity.
\end{prop}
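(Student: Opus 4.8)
The goal is to show that under hypotheses \eqref{eq:cliff} and \eqref{eq:caz}, the surface $(S,H)$ admits at most one flexible decomposition, which by Example~\ref{ex:lar_decom} must then be $H \sim L + (H-L)$. The strategy is to parametrize any flexible decomposition $H \sim A + B$ by the integers $(x,y)$ with $A = xH - yL$, as set up before Lemma~\ref{lem:firstldec}, and then to show that the numerical constraints force $(x,y)$ to be exactly $(0,-1)$ or $(1,1)$, i.e.\ $\{A,B\} = \{L, H-L\}$ (recalling $L^2 = 2r-2$, so $A^2 \geq 0$ forces us to identify $A$ with whichever of $L, H-L$ has nonnegative self-intersection, and the inequality $A^2 \geq 0$ in Lemma~\ref{lem:ldecPB} already breaks the symmetry). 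The main tool is Lemma~\ref{lem:ldecPB} together with Remark~\ref{obs:dircomp}, which confine $(x,y)$ to a bounded region; the hypotheses \eqref{eq:cliff} and \eqref{eq:caz} should collapse this region to a single lattice point (plus its $L$, $H-L$ counterpart).

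\textbf{Key steps.} First I would dispatch the case $r = 1$ directly from Lemma~\ref{lem:ldecPB}: the Clifford-type bound \eqref{eq:cliff}, namely $d \geq g/2$, forces $\frac{g-1}{d} < 2$ and $\frac{g}{d} \leq 2$, so the only admissible $y$ are $y = 1$ (with $x=1$) and $y=-1$ (with $x=0$), giving $A \sim L$ or $A \sim H-L$; since these are the two summands of the unique flexible decomposition from Example~\ref{ex:lar_decom}, decomposition rigidity holds. Second, and this is the substantive case, I would assume $r \geq 2$ and use Remark~\ref{obs:dircomp} to bound $|x|$: the remark gives $x \in \{-m+1, \dots, m\}$ where $m = \lfloor \max\{1 + \tfrac{d}{\sqrt{|\Delta|}\sqrt{g-1}}, \tfrac{g(2r-2)}{(d-\sqrt{|\Delta|})\sqrt{|\Delta|}}\}\rfloor$. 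The plan is to show that hypothesis \eqref{eq:caz} forces $m = 1$, so that $x \in \{0, 1\}$. Then \eqref{eq:1K}/\eqref{eq:3K} of Lemma~\ref{lem:ldecPB} pin down $y$ for each value of $x$: when $x = 1$ the bound $y \leq \min\{a_-, \tfrac{g}{\sqrt{|\Delta|}}\}$ together with $y > 0$ should force $y = 1$, and when $x = 0$ the third-quadrant bounds with $y < 0$ should force $y = -1$. In each case $\{A,B\} = \{L, H-L\}$.

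\textbf{Reducing $m$ to $1$.} The heart of the argument is showing $m = 1$ under \eqref{eq:caz}. This amounts to verifying both $1 + \tfrac{d}{\sqrt{|\Delta|}\sqrt{g-1}} < 2$ and $\tfrac{g(2r-2)}{(d-\sqrt{|\Delta|})\sqrt{|\Delta|}} < 2$. The first inequality rearranges to $d^2 < |\Delta|(g-1) = (4(g-1)(r-1) - d^2)(g-1)$, which one checks against \eqref{eq:caz}. For the second, clearing the denominator $(d - \sqrt{|\Delta|})$ (positive by \eqref{eq:cliff}, which guarantees $d^2 > |\Delta|$, i.e.\ $d > \sqrt{|\Delta|}$) reduces to an inequality polynomial in $d, g, r$ after isolating the surd and squaring; I expect \eqref{eq:caz}, in the sharp form $(r-1)(3g-4)^2 < 2d^2(g-2)$, to be precisely what is needed to close this estimate. \textbf{The main obstacle} will be this surd manipulation: both bounds in the definition of $m$ involve $\sqrt{|\Delta|}$, and squaring to remove the radical must be done carefully to preserve the direction of the inequalities (one must track the sign of $d - \sqrt{|\Delta|}$ and ensure both sides are positive before squaring). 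Once $m = 1$ is established the remaining step—checking that the pentagonal region of Lemma~\ref{lem:ldecPB} contains no lattice point other than the ones corresponding to $L$ and $H-L$—should follow by the explicit coordinate bounds on $y$, using \eqref{eq:cliff} to control $\tfrac{g}{\sqrt{|\Delta|}}$ and $a_- = \tfrac{d - \sqrt{|\Delta|}}{2(r-1)}$.
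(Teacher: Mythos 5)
Your overall architecture coincides with the paper's: parametrize a flexible decomposition by $(x,y)$, use Remark~\ref{obs:dircomp} and Lemma~\ref{lem:ldecPB} to force $x\in\{0,1\}$ by verifying $1+\frac{d}{\sqrt{\abs{\Delta}}\sqrt{g-1}}<2$ and $\frac{2(r-1)g}{(d-\sqrt{\abs{\Delta}})\sqrt{\abs{\Delta}}}<2$ — exactly the paper's \eqref{eq:cliff1} and \eqref{eq:cliff2}; the second is equivalent, after isolating the surd and squaring (legitimate since $d>\sqrt{\abs{\Delta}}$ by \eqref{eq:cliff}), to \eqref{eq:caz}, and the first to $4(g-1)^2(r-1)<(g-2)d^2$, which follows from \eqref{eq:caz} because $4(g-1)^2\leq \frac{1}{2}(3g-4)^2$ once $g\geq 4r-1\geq 7$. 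This part of your plan is sound, and your remark that for $x=1$ the bound $y\leq a_-=\frac{2(g-1)}{d+\sqrt{\abs{\Delta}}}<2$ forces $y=1$ is correct (the inequality $a_-<2$ reduces to $2d>g+4r-5$, which \eqref{eq:cliff} supplies) and is in fact slightly cleaner than the paper's treatment of that half.

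The genuine gap is at $x=0$, where you assert the third-quadrant bounds of Lemma~\ref{lem:ldecPB} "force $y=-1$": they do not. The lemma only gives $y\geq\max\left\{-\frac{g}{\sqrt{\abs{\Delta}}},\,-\frac{2(g-1)}{d}\right\}$, and $\frac{g}{\sqrt{\abs{\Delta}}}>2$ occurs throughout the admissible range near the Clifford bound; for instance $(g,r,d)=(20,2,13)$ satisfies \eqref{eq:basiccond}, \eqref{eq:cliff}, \eqref{eq:caz}, yet $\frac{g}{\sqrt{\abs{\Delta}}}=\frac{20}{\sqrt{93}}\approx 2.07$, so $(x,y)=(0,-2)$, i.e.\ $A=2L$, $B=H-2L$, passes every inequality in \eqref{eq:3K}. (The pentagon in Lemma~\ref{lem:ldecPB} is a convex over-approximation of the true region, so a lattice point can satisfy the lemma's conclusions while violating \eqref{eq:B2_2}.) The paper closes this case by a separate computation: for $r\geq 2$, \eqref{eq:cliff} gives $(H-2L)^2=2g+8r-10-4d\leq -4$, violating the requirement $B^2\geq -2$ in Definition~\ref{def:LD}, and a companion argument with $(H-3L)^2$ and nefness of $H$ disposes of $y=\pm 3$. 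More seriously, your dispatch of $r=1$ is simply wrong at the boundary $g=2d$, which \eqref{eq:cliff} allows: there $-\frac{g}{d}=-2$, so $y=-2$ is admissible, and now $(H-2L)^2=-2$, so \emph{no} numerical inequality excludes this point — flexibility fails only because $h^0(H-2L)=1$. The paper proves this geometrically: writing any $(-2)$-class as $aH-bL$ with $a>0$, the equation $-1=a\left[a(g-1)-bd\right]$ forces $a=1$ and $g=bd$, hence $b=2$, so $H-2L$ is the unique $(-2)$-class on $S$, hence is represented by an irreducible curve, giving $h^0(H-2L)=1<2$. Without these two supplementary arguments — one via self-intersections, one via uniqueness of the $(-2)$-class — your proof does not close.
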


\begin{remark} \label{rem:g>6}
	Condition \eqref{eq:cliff} together with the assumption that $d \leq g-1$ (cf. \eqref{eq:basiccond}) imply that $g+1 \geq 4r$.  
\end{remark}

\begin{remark}
	
	If \eqref{eq:cliff} is not fulfilled, then $(H-2L)^2 \geq 
	\begin{cases}
		-2, & \mbox{if} \; \;  r\geq 2, \\
		0, & \mbox{if} \; \; r=1,
	\end{cases}$
	whence either \linebreak $H  \sim  2L+(H-2L)$ is another  flexible  decomposition, or  the assumption  in Theorem~\ref{thm:numbers} that $g+4r-2d \neq 4$, which is equivalent to $(H-2L)^2\neq -2$, is not satisfied. Thus \eqref{eq:cliff} is the optimal  bound to be able to apply Theorem~\ref{thm:numbers}.	
\end{remark}

\begin{remark} \label{rem:UD1}
	Condition \eqref{eq:caz} is automatically fulfilled if $r=1$. Moreover,  under assumption \eqref{eq:cliff}, condition \eqref{eq:caz} is for instance implied by the simplified assumption
	\begin{equation} \label{eq:3dc-caz}
		g\geq \min\left\{ r(r+1), \; 10(r-1)\right\} =
		\begin{cases}
			r(r+1), & \mbox{if} \;\; 2 \leq r \leq 7,\\
			& \\
			10(r-1), & \mbox{if} \;\; r \geq 8.
		\end{cases}
	\end{equation}
	Indeed, because of \eqref{eq:cliff}, condition \eqref{eq:caz} is implied by
	\[
	(r-1)(3g-4)^2<2(g-2)\left(\frac{g-3}{2}+2r\right)^2,
	\]
	equivalently  
	\begin{equation} \label{eq:RM2}
		g^3+g^2(-10r+10)+g(16r^2+8r-27)-32r^2+16r+14>0.
	\end{equation} As $g(16r^2+8r-27)-32r^2+16r+14>0$ for $r\geq 2$ and $g\geq 3$, we see that \eqref{eq:RM2} holds if $g\geq 10(r-1)$. If $r \leq 7$, one can check that \eqref{eq:RM2} holds when $g \geq r(r+1)$. Hence  \eqref{eq:caz} holds.
\end{remark}

\begin{proof}[Proof of Proposition~\ref{prop:UD1}]
	We want  to prove that $H  \sim  L+(H-L)$ is the unique  flexible  decomposition of $H$. 
	To this aim, let $H \sim A+B$ where $A=xH-yL$ and $B=(1-x)H+yL$ be a  flexible  decomposition of $H$. We  will  prove that $(x,y)=(1,1)$ or $(0,-1)$, which both correspond to the decomposition $H \sim (H-L)+L$.

	If $r=1$, assumption \eqref{eq:cliff} and Lemma~\ref{lem:ldecPB} show that the only possibilities are $(x,y)=(1,1)$, $(0,-1)$ or $(0,-2)$. In the latter case we have $B \sim H-2L$ and $B^2=-2$. We claim that $B$ is the only $(-2)$-divisor on $S$, which will prove that $B$ is irreducible, giving the contradiction $h^0(B)=1$. Indeed, if $\Gamma$ is a $(-2)$-divisor on $S$, we may write $\Gamma  \sim  aH-bL$, with $a>0$, since multiples of $L$ can never contain $H$. Then $-1=\frac{1}{2}\Gamma^2=a[a(g-1)-bd]$, whence $a=1$ and $g=bd$, so $b=2$ and $\Gamma  \sim  H-2L  \sim  B$. This gives the desired contradiction, showing that the case $(x,y)=(0,-2)$ does not occur, as desired.

	Now suppose $r\geq 2$.
	
	We first claim that if $-3\leq y \leq 3$ and $0\leq x \leq 1$, then again $(x,y)=(1,1)$ or $(0,-1)$. Indeed, Lemma~\ref{lem:ldecPB} shows that it suffices to rule out the cases $(x,y)=(0,- 2), (0,-3), (1, 2)$ and $(1,3)$. In these cases one of $A$ or $B$ is $H-2L$ or $H-3L$. By \eqref{eq:cliff} we have \[(H-2L)^2=2g+8r-10-4d\leq 2g+8r-10-4\left(\frac{g-3}{2}+2r\right)=-4,\] a contradiction. Similarly, we have
	\[-2 \leq (H-3L)^2=2g+18r-20-6d \leq 2g+18r-20-6\left(\frac{g-3}{2}+2r\right)
	= 6r-11-g, \]
	whence $g \leq 6r-9$. But then, using this and \eqref{eq:cliff} once more, we obtain the contradiction
	\[ (H-3L) \cdot H= 2g-2-3d \leq 2g-2-3\left(\frac{g-3}{2}+2r\right)=\frac{g+5}{2}-6r \leq \frac{1}{2}\left(6r-4\right)-6r =-3r-2<0.\]
	
	It remains to show that $-3 \leq y \leq 3$ and $0\leq x \leq 1$. 
	By Remark~\ref{obs:dircomp} and Lemma~\ref{lem:ldecPB}, to show that $0 \leq x \leq 1$ it suffices to show that		
	\begin{equation}\label{eq:cliff1} 
		1+\frac{d}{\sqrt{\abs{\Delta}}\sqrt{g-1}}<2
	\end{equation}
	and
	\begin{equation}\label{eq:cliff2}
		\frac{2(r-1)g}{(d-\sqrt{\abs{\Delta}})\sqrt{\abs{\Delta}}}<2.
	\end{equation}
	Having done this, the case $x=1$ yields, because of the right inequality in \eqref{eq:ABH}, that
	\[ 0< y < \frac{2(g-1)}{d} \leq \frac{2(g-1)}{\frac{g-3}{2}+2r} \leq \frac{4(g-1)}{g+5} <4,\]
	and similarly the case $x=0$ yields, because of the left inequality in \eqref{eq:ABH}, that
	$0< -y <4$. We have therefore left to prove  \eqref{eq:cliff1} and  \eqref{eq:cliff2}.
	
	Inequality \eqref{eq:cliff2} is equivalent to 
	\[
	d^2-(r-1)(3g-4)<d\sqrt{\abs{\Delta}}. \]
	The latter is implied by the inequality obtained by squaring both sides, which is \eqref{eq:caz}. Thus, \eqref{eq:cliff2} is satisfied.
	
	Inequality \eqref{eq:cliff1} is equivalent to 
	\begin{equation} \label{eq:cliff1'}
		4(g-1)^2(r-1)<(g-2)d^2. 
	\end{equation}
	Our assumptions yield that $g \geq 4r-1 \geq 7$, cf. Remark~\ref{rem:g>6}, and then one readily checks that $4(g-1)^2 \leq \frac{1}{2}\left(3g-4\right)^2$, so that \eqref{eq:cliff1'} is implied by \eqref{eq:caz}. Thus, \eqref{eq:cliff1} is also satisfied.
\end{proof}

\section{Bounds on the limits of Brill--Noether special divisors}\label{sec:numbers}

The aim of this section is to give sufficient conditions so that \eqref{eq:condi1}-\eqref{eq:condi4} in Theorem~\ref{thm:numbers} cannot be verified, thus ruling out the existence of a $g^{r'}_{d'}$ as in that theorem.

\begin{lemma}\label{lemma:numcond0}
	Assume that $r'\geq 1$,  $2 \leq d' \leq g-1$,  \eqref{eq:basiccond} is satisfied, and that
	\begin{equation}\label{eq:NC30}
		\begin{cases}
			d-r+r'-d'   >  \frac{r}{r'}-1, & \mbox{if} \;\; r'\leq r, \\
			g+r'-1-d'-\frac{g+r-d-1}{r'-r+1}>0, & \mbox{if} \;\;  r < r'.\end{cases}
	\end{equation}
	Then there are no non-negative integers $r_1,r_2,d_1,d_2$ satisfying \eqref{eq:condi1}--\eqref{eq:condi4}.
\end{lemma}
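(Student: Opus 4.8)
The plan is to turn the four numerical conditions \eqref{eq:condi1}--\eqref{eq:condi4} into a single combinatorial optimization problem and then show that \eqref{eq:NC30} forbids a solution. Write $g_1 \coloneqq r$ and $g_2 \coloneqq g+r-d-1$ for the two genera appearing in \eqref{eq:condi3}--\eqref{eq:condi4}, and for a candidate quadruple set $s_i \coloneqq g_i + r_i - d_i$, so that $\rho(g_i,r_i,d_i) = g_i - (r_i+1)s_i$. Then \eqref{eq:condi3}--\eqref{eq:condi4} are equivalent to $1 \le s_i$ together with $(r_i+1)s_i \le g_i$, i.e.\ $1 \le s_i \le \lfloor g_i/(r_i+1)\rfloor$; in particular $s_i \ge 1$ forces $r_i+1 \le g_i$, so \eqref{eq:condi3} gives $r_1 \le r-1$. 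Since $d_i = g_i+r_i-s_i$, summing and inserting \eqref{eq:condi1} turns \eqref{eq:condi2} into the clean equivalent $s_1+s_2 \ge g+r'-d'$. Thus it suffices to prove that for every splitting $r_1+r_2 = r'-1$ one has $\lfloor g_1/(r_1+1)\rfloor + \lfloor g_2/(r_2+1)\rfloor < g+r'-d'$, as this forces $s_1+s_2 < g+r'-d'$ and hence the failure of \eqref{eq:condi2}.

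Next I would bound the left-hand side by the real-valued function $h(r_1) \coloneqq \tfrac{g_1}{r_1+1} + \tfrac{g_2}{r'-r_1}$ (note $r_2+1 = r'-r_1$), which is convex in $r_1$, so that on any interval its maximum is attained at an endpoint. The relevant feasible interval is $[0,r'-1]$ when $r' \le r$ and, because of $r_1 \le r-1$, the interval $[0,r-1]$ when $r' > r$. A repeatedly used input is the inequality $g_1 = r \le g+r-d-1 = g_2$, which holds since $d \le g-1$ by \eqref{eq:basiccond}.

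I would then split exactly as in \eqref{eq:NC30}. If $r' \le r$, a direct computation gives $h(r'-1)-h(0) = (1-\tfrac1{r'})(g_2-g_1) \ge 0$, so the maximum is $h(r'-1) = \tfrac{r}{r'}+g_2$, and the inequality $\tfrac{r}{r'}+g_2 < g+r'-d'$ rearranges to precisely the first line of \eqref{eq:NC30}. If $r' > r$, I would compare $h(r-1) = 1+\tfrac{g_2}{r'-r+1}$ with $h(0) = r+\tfrac{g_2}{r'}$; their difference is $(r-1)\bigl(\tfrac{g_2}{(r'-r+1)r'}-1\bigr)$. When $g_2 \ge (r'-r+1)r'$ the maximum is $h(r-1)$, and $h(r-1) < g+r'-d'$ is exactly the second line of \eqref{eq:NC30}. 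When $g_2 < (r'-r+1)r'$ the continuous maximum is instead $h(0)$; but now $\lfloor g_2/r'\rfloor \le r'-r$, so the integer-valued sum of floors is at most $r'$ at $r_1=0$ and, since $h(r_1)\le h(0) < r'+1$ by convexity, at most $r'$ for every $r_1 \in [0,r-1]$; as $d' \le g-1$ gives $r' < g+r'-d'$, this closes the sub-case.

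The main obstacle is this last sub-case ($r' > r$ with $g_2 < (r'-r+1)r'$): here the naive continuous bound $h(0)$ can reach, and at the boundary $d'=g-1$ even equal, the target $g+r'-d'$, so the estimate only closes once one replaces $h$ by the actual sum of floors and invokes $d'\le g-1$ to gain the extra strict unit. Keeping track of which inequalities are strict, and verifying that the endpoint values of $h$ really match the two expressions in \eqref{eq:NC30} after the rearrangements above, is the delicate bookkeeping; it is the convexity of $h$ that reduces everything to these endpoints and makes the argument uniform across the cases.
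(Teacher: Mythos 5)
Your proof is correct and takes essentially the same route as the paper's: both arguments reduce, via convexity in the splitting parameter $r_1$, to checking the two endpoint values, which reproduce exactly the two lines of \eqref{eq:NC30}, and both invoke $d' \leq g-1$ to dispatch the residual sub-case at the $r_1=0$ endpoint when $r'>r$ (your threshold $g_2 < (r'-r+1)r'$ is the same as the paper's $g-d-1<(r'-r)(r'+1)$). Your repackaging via $s_i = g_i+r_i-d_i$, which turns \eqref{eq:condi2} into the clean threshold $s_1+s_2 \geq g+r'-d'$, and your derivation of $r_1 \leq r-1$ directly from \eqref{eq:condi3} are tidier bookkeeping than the paper's comparison of a convex upper bound with a concave lower bound for $d_1$ and its separate integer-gap argument for $r_1 \geq r$, but the underlying mechanism is identical.
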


\begin{proof} Assume, to get a contradiction, that $r_1,r_2,d_1,d_2$ are non-negative integers satisfying \eqref{eq:condi1}--\eqref{eq:condi4}.
	
	We have
	\begin{equation} \label{eq:condi1+}
		0 \leq r_1 \leq r'-1. 
	\end{equation}
	Condition \eqref{eq:condi3} can be rewritten as
	\begin{equation}
		\label{eq:condi3+}
		r_1+\frac{r_1r}{r_1+1} \leq d_1 < r_1+r.
	\end{equation}
	From the left hand inequality of \eqref{eq:condi4} combined with \eqref{eq:condi1} and \eqref{eq:condi2}, along with the fact that $\rho(g,r,-)$ is an increasing function, we obtain
	\[ \rho(g+r-d-1,r'-1-r_1,d'-d+2r-2-d_1) \geq 0,\]
	which can be rewritten as
	\begin{equation}
		\label{eq:condi4+}
		d_1 \leq d'+r+r_1-r'-g +\frac{g+r-d-1}{r'-r_1}.
	\end{equation}  
	
	Grant for the moment the following:
	
	\begin{claim} \label{cl:NC1}
		If
		\begin{itemize}
			\item $r'\leq r$, or
			\item  $r_1 <r <r'$, 
		\end{itemize}
		then
		\begin{equation}
			\label{eq:NC1}
			d'+r+r_1-r'-g +\frac{g+r-d-1}{r'-r_1} < r_1+\frac{r_1r}{r_1+1}.
		\end{equation}
	\end{claim}
	
	We show how the lemma follows from the claim.
	
	Assume that
	$r' \leq r$. Then Claim~\ref{cl:NC1} shows that \eqref{eq:condi4+} and the left hand inequality of \eqref{eq:condi3+} are incompatible. We are therefore done in this case.
	
	Assume, finally, that $r'>r$. Then Claim~\ref{cl:NC1} shows that \eqref{eq:condi4+} and the left hand inequality of \eqref{eq:condi3+} are compatible only for $r_1>r-1$. This latter inequality is on the other hand equivalent to
	\[r+r_1-\left(r_1+\frac{rr_1}{r_1+1} \right) <1.\]
	Thus there is no integer $d_1$ satisfying \eqref{eq:condi3+}. This concludes the proof of the lemma.

	We have left to prove the claim:

	\begin{proof}[Proof of Claim~\ref{cl:NC1}]
		Since the left hand side of \eqref{eq:NC1} is a convex function of $r_1$ and the right hand side is a concave function of $r_1$, it suffices to prove the inequality for the two boundary values $r_1=0$ and 
		\[
		r_1=\begin{cases} 
			r'-1, & \mbox{if} \;\; r'\leq r, \\
			r-1 & \mbox{if} \;\; r'>r,
		\end{cases}
		\] 
		cf. \eqref{eq:condi1+}, where it reads, respectively,
		\begin{equation}
			\label{eq:NC2}
			d'+r-r'-g +\frac{g+r-d-1}{r'} < 0,
		\end{equation}
		\begin{equation}
			\label{eq:NC3}
			d'-d+2r-2 < r'-1+\frac{(r'-1)r}{r'}, \;\; \mbox{if} \;\; r' \leq r,
		\end{equation}
		and
		\begin{equation}
			\label{eq:NC4}
			d'+1-r'-g +\frac{g+r-d-1}{r'-r+1} < 0, \;\; \mbox{if} \;\; r'>r.
		\end{equation}
		One easily verifies that \eqref{eq:NC3} is equivalent to the upper inequality in \eqref{eq:NC30} and implies \eqref{eq:NC2}. We are therefore done in the case $r' \leq r$.
		
		Assume now that 
		$r'>r$. If $r \geq r'-\frac{g-d-1}{r'+1}$, one verifies that \eqref{eq:NC4} implies \eqref{eq:NC2}. If $r < r'-\frac{g-d-1}{r'+1}$, then $g-d-1<(r'-r)(r'+1)$. Using this together with $d' \leq g-1$, one checks that \eqref{eq:NC2} is satisfied. Thus, \eqref{eq:NC2} is redundant, and we are left with \eqref{eq:NC4}, which can be rewritten as the lower inequality in \eqref{eq:NC30}.  This concludes the proof of the claim.
	\end{proof}                        
	Having proved the claim, the lemma follows. 
\end{proof}

\section{Main result and applications} \label{sec:app}

We summarize the results of the previous sections as the main result of the paper.

\begin{thm} \label{thm:main_K3_0}
	Let $g,r,d,r',d'$ be integers satisfying
	\begin{eqnarray}
		\label{eq:basiccond+}
		& g \geq 3, \;\; r \geq 1, \;\; r' \geq 1, \;\; 2 \leq d \leq g-1, \;\;  2 \leq d' \leq g-1, &  \\
		\label{eq:cliff+} & d  \geq 
		\begin{cases}
			\frac{g-3}{2}+2r, & \mbox{if} \; \;  r\geq 2, \\
			\frac{g}{2}, & \mbox{if} \; \; r=1, \end{cases} \\
		\label{eq:caz+}
		&  (r-1)(3g-4)^2  <  2d^2(g-2), & \\
		& \label{eq:rho'} \rho(g,r',d')<0, & \\
		\label{eq:NC30+}
		&\begin{cases}
			d-r+r'-d'   >  \frac{r}{r'}-1, & \mbox{if} \;\; r'\leq r, \\
			g+r'-1-d'-\frac{g+r-d-1}{r'-r+1}>0, & \mbox{if} \;\;  r < r'.\end{cases} &		\end{eqnarray}
	Then for any K3 surface $(S,H)\in\K^r_{g,d}$ with $\Pic(S)=\Lambda^r_{g,d}$, we have that $|H|$ contains smooth irreducible curves, and for any such $C \in |H|$ we have:
	\begin{enumerate}[label=(\roman*)]
		\item $C$ has genus $g$,
		\item $|\O_C(L)|$ is a base point free complete $g^r_d$, which is very ample if $r \geq 3$,
		\item $C$ carries no $g^{r'}_{d'}$.
	\end{enumerate}
\end{thm}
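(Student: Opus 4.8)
The plan is to assemble the three main results of the preceding sections---Proposition~\ref{prop:UD1}, Theorem~\ref{thm:numbers}, and Lemma~\ref{lemma:numcond0}---into a single chain of implications, checking at each step that the cited hypotheses match the numerical conditions \eqref{eq:basiccond+}--\eqref{eq:NC30+}. First I would note that \eqref{eq:basiccond+} contains \eqref{eq:basiccond} for the triple $(g,r,d)$, that \eqref{eq:cliff+} and \eqref{eq:caz+} are literally \eqref{eq:cliff} and \eqref{eq:caz}, and that the existence of $(S,H)\in\K^r_{g,d}$ with $\Pic(S)=\Lambda^r_{g,d}$ forces $\Delta^r_{g,d}<0$ by the Hodge index theorem, so that \eqref{eq:defD_grd} holds as well. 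Proposition~\ref{prop:UD1} then applies verbatim and shows that $(S,H)$ satisfies decomposition rigidity in the sense of Definition~\ref{def:DR}.

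The next step is to verify the one additional hypothesis required by Theorem~\ref{thm:numbers}, namely that $g+4r-2d\neq 4$ when $r>1$. This is the only genuine computation in the argument: for $r\geq 2$, doubling \eqref{eq:cliff+} gives $2d\geq g-3+4r$, hence $g+4r-2d\leq 3<4$ (equivalently $(H-2L)^2\leq -4$, as in the remark following Proposition~\ref{prop:UD1}). With decomposition rigidity and this inequality established, Theorem~\ref{thm:numbers} applies and at once yields conclusions (i) and (ii), that every smooth irreducible $C\in|H|$ has genus $g$ and that $|\O_C(L)|$ is a base point free complete $g^r_d$, very ample when $r\geq 3$. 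It also reduces (iii) to a purely numerical question, since it asserts that a smooth curve in $|H|$ carries a $g^{r'}_{d'}$ with $\rho(g,r',d')<0$ if and only if there are non-negative integers $r_1,r_2,d_1,d_2$ satisfying \eqref{eq:condi1}--\eqref{eq:condi4}.

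Finally I would close the loop with Lemma~\ref{lemma:numcond0}, whose hypotheses $r'\geq 1$, $2\leq d'\leq g-1$, \eqref{eq:basiccond}, and the inequality \eqref{eq:NC30} are exactly supplied by \eqref{eq:basiccond+} and \eqref{eq:NC30+}. The lemma then guarantees that no non-negative integers $r_1,r_2,d_1,d_2$ satisfy \eqref{eq:condi1}--\eqref{eq:condi4}, so by the ``only if'' direction of Theorem~\ref{thm:numbers} no smooth irreducible curve in $|H|$ carries a $g^{r'}_{d'}$ with $\rho(g,r',d')<0$. Since \eqref{eq:rho'} assumes $\rho(g,r',d')<0$, every $g^{r'}_{d'}$ on $C$ would be Brill--Noether special, and such divisors have just been excluded; this is conclusion (iii). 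Because essentially all of the difficulty has been absorbed into the three upstream results, I do not expect a real obstacle at this stage: the theorem is an assembly, and the only point demanding care is the bookkeeping that the displayed hypotheses coincide with those of the cited statements, together with the short verification that \eqref{eq:cliff+} forces $g+4r-2d\leq 3$ so that Theorem~\ref{thm:numbers} is applicable.
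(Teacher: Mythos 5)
Your proposal is correct and follows essentially the same route as the paper's own proof: Proposition~\ref{prop:UD1} yields decomposition rigidity from \eqref{eq:basiccond+}--\eqref{eq:caz+}, the observation that \eqref{eq:cliff+} forces $g+4r-2d\leq 3$ when $r\geq 2$ makes Theorem~\ref{thm:numbers} applicable (giving (i), (ii), and the numerical criterion \eqref{eq:condi1}--\eqref{eq:condi4}), and Lemma~\ref{lemma:numcond0} together with \eqref{eq:NC30+} excludes the integers $r_1,r_2,d_1,d_2$, establishing (iii). The only cosmetic difference is that the paper deduces $\Delta^r_{g,d}<0$ directly from \eqref{eq:caz+} to guarantee the surfaces exist, whereas you obtain it from the Hodge index theorem on a given surface; both are valid.
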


\begin{proof}
	Condition \eqref{eq:caz+} implies \eqref{eq:defD_grd}, which allows us to construct a K3 surface $(S,H)\in\K^r_{g,d}$ with $\Pic(S)=\Lambda^r_{g,d}$ as in \S\ref{sec:K3-ld}. Conditions \eqref{eq:basiccond+}-\eqref{eq:caz+} are the conditions in Proposition~\ref{prop:UD1}, which guarantee that
	$(S,H)$ satisfies decomposition rigidity. For $r \geq 2$, inequality \eqref{eq:cliff+} can be rewritten as
	$g+4r-2d \leq 3$, so that also 
	the assumption that $g+4r-2d \neq 4$ in Theorem
	\ref{thm:numbers} is  satisfied. Thus, by  the latter, properties $(i)$ and $(ii)$ in the theorem are fulfilled. If $(iii)$ is not fulfilled, then, due to \eqref{eq:rho'}, there would be integers $r_1,r_2,d_1,d_2$ satisfying \eqref{eq:condi1}-\eqref{eq:condi4}. But condition \eqref{eq:NC30+} is \eqref{eq:NC30} of  Lemma~\ref{lemma:numcond0}, which then tells us that such integers cannot exist. Thus, $(iii)$ is fulfilled.
\end{proof}

\begin{remark} \label{rem:UD1+}
	Remark~\ref{rem:UD1} tells us that condition \eqref{eq:caz+} can be replaced by the simplified assumption
	\begin{equation} \label{eq:3dc-caz+}
		g\geq \min\left\{ r(r+1), \; 10(r-1)\right\} =
		\begin{cases}
			r(r+1), & \mbox{if} \;\; 2 \leq r \leq 7,\\
			& \\
			10(r-1), & \mbox{if} \;\; r \geq 8.
		\end{cases}
	\end{equation}
\end{remark}

We will now deduce Theorem~\ref{thm:conj_intro} in the introduction from Theorem~\ref{thm:main_K3_0}. To this aim, we first need two lemmas.

\begin{lemma} \label{lemma:basicML2}
	Assume that the triple $(g,r,d)$ is associated to an expected maximal Brill--Noether locus, $g \geq 3$ and $(g,r,d)\neq(6,2,5)$. Then conditions \eqref{eq:cliff+}-\eqref{eq:caz+} in Theorem~\ref{thm:main_K3_0} are satisfied.
\end{lemma}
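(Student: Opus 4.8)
The plan is to verify the two conditions \eqref{eq:cliff+} and \eqref{eq:caz+} separately, drawing on exactly two consequences of expected maximality: the inequality $g \geq r(r+1)$ supplied by \eqref{eq:EM3}, and the lower bound on the degree coming from the requirement $\rho(g,r,d+1) \geq 0$ in the definition of an expected maximal locus. Expanding $\rho(g,r,d+1) = g - (r+1)(g-d-1+r) \geq 0$ and solving for $d$ gives
\[
d \geq \frac{gr}{r+1} + r - 1,
\]
which will be my principal handle on $d$. I would establish \eqref{eq:cliff+} first, since \eqref{eq:caz+} then follows almost formally.

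For \eqref{eq:cliff+}, the displayed bound already does most of the work. When $r = 1$ it reads $d \geq g/2$, which is precisely \eqref{eq:cliff+}. When $r \geq 2$, the target $d \geq \frac{g-3}{2} + 2r$ follows once $\frac{gr}{r+1} + r - 1 \geq \frac{g-3}{2} + 2r$, and clearing denominators shows this is equivalent to $g(r-1) \geq (2r-1)(r+1)$. Inserting $g \geq r(r+1)$ reduces it to $r(r-1) \geq 2r-1$, that is $r^2 - 3r + 1 \geq 0$, which is valid for every $r \geq 3$. This settles all $r \geq 3$. The case $r = 2$ is the delicate one: there the inequality $g(r-1) \geq (2r-1)(r+1)$ becomes simply $g \geq 9$, so the expected-maximal triples with $6 \leq g \leq 8$ escape this argument and must be checked by hand. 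Computing $d = \dmax(g,2)$ directly yields the triples $(7,2,6)$ and $(8,2,7)$, both of which satisfy \eqref{eq:cliff+} (with equality when $g = 7$), whereas the remaining triple $(6,2,5)$ fails it — and this is exactly the case excluded by hypothesis.

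For \eqref{eq:caz+}, I would invoke Remark~\ref{rem:UD1}. Once \eqref{eq:cliff+} is in hand, that remark shows condition \eqref{eq:caz}, which coincides verbatim with \eqref{eq:caz+}, is implied by $g \geq \min\{r(r+1),\,10(r-1)\}$; and this holds because $g \geq r(r+1)$. For $r = 1$ the condition \eqref{eq:caz+} is moreover automatic, since its left-hand side vanishes.

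I expect the main obstacle to lie entirely in the boundary analysis at $r = 2$. This is where the clean inequality $g(r-1) \geq (2r-1)(r+1)$ degrades from the uniform bound $g \geq r(r+1)$, forcing a finite case check, and where the single genuine exception $(6,2,5)$ must be isolated. Confirming that $(7,2,6)$ and $(8,2,7)$ survive — the former only barely, by equality in \eqref{eq:cliff+} — is precisely what makes the hypothesis $(g,r,d) \neq (6,2,5)$ both necessary and sharp.
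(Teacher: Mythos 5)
Your proof is correct and follows essentially the same route as the paper: the degree bound $d \geq \frac{gr}{r+1}+r-1$ (equivalent to using \eqref{dmaxdef}), the inequality $g \geq r(r+1)$ from \eqref{eq:EM3}, and Remark~\ref{rem:UD1} to dispatch \eqref{eq:caz+} once \eqref{eq:cliff+} is established. Your explicit boundary analysis at $r=2$ with the finite check of $g \in \{6,7,8\}$ — isolating $(6,2,5)$ as the unique failure and noting equality at $(7,2,6)$ — is precisely the verification the paper compresses into ``Using $(g,r)\neq(6,2)$, one can check.''
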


\begin{proof}
	If $r=1$, then $d=\left\lceil\frac{g}{2}\right\rceil$ by \eqref{dmaxdef}, so \eqref{eq:cliff+} is satisfied. If $r\geq 2$, we have $g \geq r(r+1)$ by \eqref{eq:EM3}. Remark~\ref{rem:UD1+} yields that \eqref{eq:caz+} is satisfied. Using $(g,r) \neq (6,2)$, one can check that the inequality
	\[ r-1+\left\lceil\frac{gr}{r+1}\right\rceil \geq \frac{g-3}{2}+2r\]
	holds, whence \eqref{dmaxdef} implies that \eqref{eq:cliff+} is satisfied. Hence, \eqref{eq:cliff+}-\eqref{eq:caz+} in Theorem~\ref{thm:main_K3_0} are satisfied.
\end{proof}

\begin{lemma}\label{lemma:basicML3}
	Assume that the triple $(g,r,d)$ is associated to an expected maximal Brill--Noether locus, with $2 \leq d \leq g-1$, and $(r',d')$ a pair of integers such that $r' \geq 1$, 
	$2 \leq d' \leq g-1$,  $\rho(g,r',d')<0$, and either
	\begin{enumerate}[label={\normalfont(\alph*)}]
		\item  $(g,r',d')$ is associated to an expected maximal Brill--Noether locus, $r' \neq r$, or
		\item $r'=r$ and $d'  \leq d-1$, or
		\item $r'=r+1$ and $d' \leq d+1$.
	\end{enumerate}
	Then, except for the cases 
	\begin{equation}
		\label{eq:exceptions}
		(g,r,d,r',d') \neq (6,2,5,1,3),(7,2,6,1,4),(8,1,4,2,7), (9,2,7,1,5),
	\end{equation}
	condition \eqref{eq:NC30+} in Theorem~\ref{thm:main_K3_0} is satisfied. 
\end{lemma}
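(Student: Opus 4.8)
The plan is to dispose of cases (b) and (c) by direct substitution into \eqref{eq:NC30+}, and to reserve the real work for case (a). In case (b) we have $r'=r$, so the first branch of \eqref{eq:NC30+} applies and its right-hand side $\frac{r}{r'}-1$ vanishes; the condition becomes $d-d'>0$, which is immediate from $d'\leq d-1$. In case (c) we have $r'=r+1$, so the second branch applies with $r'-r+1=2$; clearing the denominator turns it into $2d'<g+r+d+1$, and since $d'\leq d+1$ while $g+r-d-1\geq r\geq 1>0$ gives $2(d+1)<g+r+d+1$, the inequality holds with no exceptions. Thus neither (b) nor (c) produces an exceptional tuple.

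For case (a), where both $(g,r,d)$ and $(g,r',d')$ are expected maximal with $r'\neq r$, I would first rewrite \eqref{eq:NC30+} in terms of floor functions. By \eqref{dmaxdef} we have $d=\dmax(g,r)=r-1+\lceil gr/(r+1)\rceil$, and the identity $\lceil gr/(r+1)\rceil=g-\floor{g/(r+1)}$ (valid since $g\in\ZZ$) gives $g+r-d-1=\floor{g/(r+1)}$, with the analogous formulas for $r'$. Substituting, the first branch ($r'<r$) becomes
\[
\floor{\frac{g}{r'+1}}-\floor{\frac{g}{r+1}}>\frac{r-r'}{r'},
\]
and the second branch ($r'>r$) becomes
\[
(r'-r+1)\floor{\frac{g}{r'+1}}>\floor{\frac{g}{r+1}}.
\]

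To prove these I would exploit the bound $g\geq\max\{r,r'\}(\max\{r,r'\}+1)$ furnished by \eqref{eq:EM3}. Replacing each floor by the underlying rational number, up to an error of at most $1$, reduces both displays to elementary polynomial inequalities in $g,r,r'$; a direct computation shows these crude estimates already suffice whenever $|r-r'|\geq 2$. For consecutive ranks $|r-r'|=1$ with $\min\{r,r'\}\geq 2$ the crude estimate is too lossy in one orientation, and there I would instead observe that the relevant half-open interval, e.g.\ $(g/(r+1),g/r]$ when $r=r'+1$, has length $g/(r(r+1))\geq 1$ and therefore contains an integer, forcing the floor difference to be at least $1$; as the right-hand side is then $\tfrac{1}{r'}\leq\tfrac12$, the inequality follows. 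In the other orientation the crude estimate does close the gap once $\min\{r,r'\}\geq 2$.

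The crux, and the only source of exceptions, is the pair of consecutive cases with $\min\{r,r'\}=1$, namely $(r,r')=(2,1)$ and $(r,r')=(1,2)$, where the right-hand side equals exactly $1$ and no uniform estimate closes the gap. Here I would analyze the inequality directly as a function of $g$: for $(2,1)$ one needs $\floor{g/2}-\floor{g/3}\geq 2$, which the estimate $\floor{g/2}-\floor{g/3}\geq g/6-\tfrac12$ secures for $g\geq 15$, leaving the range $6\leq g\leq 14$ to check by hand and isolating the failures $g=6,7,9$; for $(1,2)$ one needs $2\floor{g/3}>\floor{g/2}$, which holds for $g\geq 9$ and fails in $6\leq g\leq 8$ only at $g=8$. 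These recover precisely the four tuples in \eqref{eq:exceptions}. The main obstacle is therefore not a single hard estimate but the bookkeeping: one must verify that the crude and interval arguments genuinely cover every pair with $\{r,r'\}\neq\{1,2\}$, so that the finite check accounts for all four exceptions and introduces no spurious ones.
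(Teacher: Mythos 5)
Your proposal is correct and follows essentially the same route as the paper: dispose of (b) and (c) by direct substitution, substitute $d=\dmax(g,r)$, $d'=\dmax(g,r')$ in case (a) to reduce \eqref{eq:NC30+} to ceiling/floor inequalities, settle all pairs with $|r-r'|\geq 2$ (and one consecutive orientation) by crude rational bounds using $g\geq\max\{r,r'\}(\max\{r,r'\}+1)$ from \eqref{eq:EM3}, and finish the consecutive boundary pairs $(r,r')=(2,1)$ and $(1,2)$ by finite checks that isolate exactly the four exceptions in \eqref{eq:exceptions}. The only differences are cosmetic: you rewrite the ceilings as floors via $\lceil gr/(r+1)\rceil=g-\lfloor g/(r+1)\rfloor$ and replace the paper's integrality-of-ceilings step (its treatment of \eqref{eq:cl12} for $r\geq 3$) by the equivalent observation that the interval $\left(g/(r+1),\,g/r\right]$ of length $\geq 1$ contains an integer.
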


\begin{proof}
	It is straightforward to check that \eqref{eq:NC30+} is satisfied under assumptions (b) and (c). We are therefore left with case (a), where $r' \neq r$	and $(g,r',d')$ is associated to an expected maximal Brill--Noether locus.

	We first prove the lower inequality in \eqref{eq:NC30+}.  After substituting expressions for $d$ and $d^\prime$ given by \eqref{dmaxdef} the inequality is  equivalent to
	\begin{equation}
		\label{eq:NC4'} \left \lceil \frac{gr'}{r'+1} \right\rceil \left(r'+1-r\right)< \left \lceil \frac{gr}{r+1} \right\rceil +g(r'-r).
	\end{equation}
	The latter is implied by
	\[ \left(\frac{gr'}{r'+1} +1\right) \left(r'+1-r\right) \leq \frac{gr}{r+1}  +g(r'-r),\]
	which is equivalent to
	\begin{equation} \label{eq:heart}
		r'+1-r \leq \frac{gr(r'-r)}{(r+1)(r'+1)}.
	\end{equation}
	Since $g \geq r'(r'+1) \geq (r+1)(r'+1)$ by \eqref{eq:EM3}, the latter is implied by
	\[
	r'+1-r \leq r(r'-r), \]
	which is easily seen to hold as soon as $r \geq 2$. This leaves the case $r=1$, where \eqref{eq:heart} reads
	\[ g \geq \frac{2r'(r'+1)}{r'-1}. \]
	Using the fact that $g \geq r'(r'+1)$ by \eqref{eq:EM3}, we see that the latter is satisfied as long as $r' \geq 3$.  Recalling that $r' \geq 2$ (as $r'>r$),  this leaves the case $(r,r')=(1,2)$, in which case 
	\eqref{eq:NC4'}  reads  
	\begin{equation} \label{eq:atalanta}
		2\ceil{\frac{2g}{3}}< g+\ceil{\frac{g}{2}}.
	\end{equation}
	One easily checks that this is satisfied, since we are assuming $(g,r,d,r',d') \neq (8,1,4,2,7)$.  This  finishes the proof of the  lower  inequality in \eqref{eq:NC30+}.

	We finally prove the upper inequality in \eqref{eq:NC30+}.
	After substituting the expressions for $d$ and $d^\prime$ given by \eqref{dmaxdef}, the inequality is equivalent to
	\begin{equation}
		\label{eq:cl11}
		\left\lceil \frac{r'g}{r'+1} \right\rceil +\frac{r}{r'} < \left\lceil \frac{rg}{r+1} \right\rceil +1.
	\end{equation}
	The latter inequality is implied by
	\[
	\frac{r'g}{r'+1} +\frac{r}{r'} \leq \frac{rg}{r+1},
	\]
	which can be rewritten as
	\[
	\frac{r}{r'} \leq  \frac{g(r-r')}{(r'+1)(r+1)}.
	\]
	By \eqref{eq:EM3} the latter inequality is implied by
	\[ \frac{1}{r'} \leq \frac{r-r'}{r'+1}, \]
	which is equivalent to
	\[\frac{r'+1}{r'} \leq r-r'.\]
	Since $r' <r$, the latter is satisfied unless $r'=r-1$. We have therefore left to prove \eqref{eq:cl11}  when  $r'=r-1$, in which case it reads
	\begin{equation}
		\label{eq:cl12}
		\left\lceil \frac{(r-1)g}{r} \right\rceil +\frac{1}{r-1} < \left\lceil \frac{rg}{r+1} \right\rceil.
	\end{equation}
	Recall that 
	$r \geq 2$ (as $r \geq r'+1$).  If $r \geq 3$, then \eqref{eq:cl12} is implied by
	\[ \frac{(r-1)g}{r} +1 \leq \frac{rg}{r+1},\]
	which can be rewritten as $g \geq r^2+r$, which holds by \eqref{eq:EM3}. Thus, \eqref{eq:cl12} is proved  if $r \geq 3$. If $r=2$, then \eqref{eq:cl12}  reads
	
	\[ \ceil{\frac{g}{2}}+1 < \ceil{\frac{2g}{3}}.\]
	Using our assumptions that $(g,r,d,r',d') \neq (6,2,5,1,3),(7,2,6,1,4),(9,2,7,1,5)$, one verifies that the latter inequality is satisfied.  Thus, \eqref{eq:cl12} is proved  if $r =2$. This finishes  the proof of the upper inequality in \eqref{eq:NC30+}.
\end{proof}

\begin{remark}\label{rem:625}
	While the hypotheses of Theorem
	\ref{thm:main_K3_0} do not hold in the case $(g,r,d)=(6,2,5)$, we
	still have the existence of a polarized K3 surface $(S,H)\in\K^2_{6,5}$ such
	that $|H|$ contains smooth irreducible curves of genus 6 and $|\O_C(L)|$ is
	a base point free very ample complete $g^2_5$. Indeed, the existence of $(S,H)$ with $\Pic(S)=\Lambda^2_{6,5}$ such that the conclusions $(i)$--$(ii)$ of
	Theorem~\ref{thm:main_K3_0} are satisfied follows as above; this has, in fact, already
	been done (in a more general setting) in \cite[Thm. 4.3]{ELMS}. In particular, the curves are isomorphic 
	to smooth plane quintics, whence do not carry $g^1_3$s, so they again do not carry any $g^{r'}_{d'}$ with $(r',d') \neq (2,5)$.  One may also check that $(S,H)$ satisfies decomposition rigidity. 
\end{remark}

Finally, applying all this to the expected maximal Brill--Noether
loci, we can now give a proof of Theorem~\ref{thm:conj_intro}.  

\begin{proof}[Proof of Theorem~\ref{thm:conj_intro}]
	Lemma~\ref{lemma:basicML2} tells us that $(g,r,d)$ satisfies conditions \eqref{eq:basiccond+}-\eqref{eq:caz+} in Theorem~\ref{thm:main_K3_0}.
	In particular, by Proposition~\ref{prop:UD1}, the hypotheses in Theorem~\ref{thm:numbers} are satisfied by any K3 surface $S$ with $\Pic(S)=\Lambda^r_{g,d}$, as in \S\ref{subsec:k3notation}. Let $C \in |H|$ be any smooth curve and assume it carries a $g^{r'}_{d'}$ with $\rho(g,r',d') <0$ and $(r',d') \neq (r,d)$, hence $C$ also carries a $g^{r''}_{d''}$ with $(g,r'',d'')$ associated to an expected maximal Brill--Noether locus, obtained using the trivial containments $\M^r_{g,d} \subset \M^r_{g,d+1}$
	and $\M^r_{g,d} \subset \M^{r-1}_{g,d-1}$, along with
	Serre duality.
	
	Assume that $(r'',d'') \neq (r,d)$. Then,  if  $(g,r,d) \neq (6,2,5)$, Lemma~\ref{lemma:basicML3} tells us that all the remaining conditions in Theorem~\ref{thm:main_K3_0} are satisfied for $(r',d')=(r'',d'')$ (recall that we are assuming $(g,r,d) \neq (7,2,6),(8,1,4),(9,2,7)$). Hence, by the same theorem, we get the desired contradiction, that is, that the 
	$g^{r''}_{d''}$ cannot exist. 
	
	Assume that $(r'',d'') = (r,d)$. Then the $\g{r}{d}$ is obtained from the $\g{r'}{d'}$ by a series of trivial containments and Serre duality, the last of which is Serre duality or one of the trivial containments $\BN{g}{r}{d-1} \subset \BN{g}{r}{d}$ or $\BN{g}{r+1}{d+1}\subset \BN{g}{r}{d}$. However, if Serre duality were the last step, then the $\g{g-d+r-1}{2g-2-d}$ was obtained using trivial containments from the Serre duals of a $\g{r+1}{d+1}$ or a $\g{r}{d-1}$.  Thus, in any case, $C$ carries a $\g{r}{d-1}$ or a $\g{r+1}{d+1}$. Again if $(g,r,d) \neq (6,2,5)$, Lemma~\ref{lemma:basicML3} tells us that all the remaining conditions in Theorem~\ref{thm:main_K3_0} are satisfied for $(r',d')=(r,d-1)$ or $(r+1,d+1)$, and we get a contradiction again.

        The remaining case of $(g,r,d) =(6,2,5)$ has already been
        handled by a direct geometric argument in
        \cite[Proposition~6.1]{AH}.  One can, in fact, also handle
        this case using K3 surfaces, as in Remark~\ref{rem:625}.
\end{proof}

\begin{remark}
Theorem~\ref{thm:main_K3_0} directly shows that the general
curve in the Brill--Noether loci $\BN{7}{1}{4}$, $\BN{8}{2}{7}$, and
$\BN{9}{1}{5}$ is $\g{r}{d}$-general for $(r,d)=(1,4),(2,7),(1,5)$,
respectively, which are the maximal Brill--Noether loci in the exceptional genera, as in Example~\ref{ex: unex cont}. Hence Theorem~\ref{thm:conj_intro} says that any expected maximal
Brill--Noether locus $\BN{g}{r}{d}$ contains a $\g{r}{d}$-general
curve, unless $g=7,8,9$.
\end{remark}

\begin{remark}
	This also answers the question of containments of Brill--Noether loci of the form $\BN{g}{r}{d}\subset \BN{g}{r'}{d'}$ with $\rho(g,r,d)=-2$ and $\rho(g,r',d')=-1$, studied in \cite{AHL,CHOI2022,bigas2023brillnoether}. Indeed, as observed in \cite[Lemma 1.2]{BH_2024_BN_deg}, Brill--Noether loci with $\rho=-1,-2$ are expected maximal. 
\end{remark}

We give a few examples in genus $7,8,9$ of distinguishing Brill--Noether loci using Theorem~\ref{thm:main_K3_0}.

\begin{example}
	In genus $7$, Theorem~\ref{thm:main_K3_0} yields that the locus $\BN{7}{2}{6}$ is not contained in any Brill--Noether locus except $\BN{7}{1}{4}$, which is the unexpected containment of Example~\ref{ex: unex cont}.
	
	In genus $8$, we obtain the non-containment $\BN{8}{1}{4} \nsubseteq \BN{8}{2}{6}$, thus the locus $\BN{8}{1}{4}$ is not contained in any Brill--Noether locus except $\BN{8}{2}{7}$, again an unexpected containment of Example~\ref{ex: unex cont}. The containment $\BN{8}{2}{6} \subset \BN{8}{1}{4}$ is obtained in \cite[Lemma 3.4]{Mu2}.
	
	In genus $9$, we obtain that the locus $\BN{9}{2}{7}$ is not contained in any Brill--Noether locus except $\BN{9}{1}{5}$. 
\end{example}

Theorem~\ref{thm:main_K3_0} also gives non-containments of non-maximal Brill--Noether loci.

\begin{example}			
	In genus $11$, we obtain the non-containments $\BN{11}{3}{10}\nsubseteq \BN{11}{2}{8}$ and $\BN{11}{2}{8} \nsubseteq \BN{11}{3}{10}$.
\end{example}

In fact,
Theorem~\ref{thm:main_K3_0} gives infinitely many non-containments of Brill--Noether loci that are not maximal. As a sample, let us see what happens when we lower the maximal degrees a little. Recall the definition of $\dmax(g,r)$ from \eqref{dmaxdef}.

\begin{prop}\label{prop:nonmax}
	Let $\BN{g}{r}{\dmax(g,r)}$ and $\BN{g}{r'}{\dmax(g,r')}$ be expected maximal loci with $r \geq 2$, $r' \geq 2$ and $r \neq r'$. We have a non-containment $\BN{g}{r}{d}\nsubseteq \BN{g}{r'}{\dmax(g,r')-1}$ in any of the following cases:

	\begin{itemize}
		\item $d \geq \dmax(g,r)-1$ and
		\begin{itemize}
			\item[$\circ$] $r' < r$, $(r,r',g) \neq (3,2,12)$, or
			\item[$\circ$] $(r,r')=(2,3)$, $g \in \{13,14,15,16,18\}$,
		\end{itemize}
		\item $d \geq \dmax(g,r)-2$ and
		$(r,r')=(2,4)$, $g \in \{20,21,22,24\}$,
		\item $d \geq \dmax(g,r)-3$ and
		$(r,r')=(2,5)$, $g=30$,
		\item $d \geq \dmax(g,r)-(r'-r+1)$ and
		\begin{itemize}
			\item[$\circ$] $r' > r \geq 3$, or
			\item[$\circ$] $r' \geq 6$, $r=2$, or
			\item[$\circ$] $(r,r')=(2,5)$ and $g \geq 31$, or
			\item[$\circ$] $(r,r')=(2,4)$ and $g \geq 25$, or $g=23$, or
			\item[$\circ$] $(r,r')=(2,3)$ and $g \geq 19$, or $g=17$.
		\end{itemize}
	\end{itemize}
\end{prop}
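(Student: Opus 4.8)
The plan is to invoke Theorem~\ref{thm:main_K3_0} with the given $(g,r,d)$ and the choice $d'\coloneqq\dmax(g,r')-1$. If its hypotheses \eqref{eq:basiccond+}--\eqref{eq:NC30+} are met, then for $(S,H)\in\K^r_{g,d}$ with $\Pic(S)=\Lambda^r_{g,d}$ the general curve $C\in|H|$ carries a $\g{r}{d}$ but no $\g{r'}{d'}$, so $C$ witnesses $\BN{g}{r}{d}\nsubseteq\BN{g}{r'}{\dmax(g,r')-1}$. The whole argument is thus the verification of those hypotheses across the listed ranges. Two of them are essentially free: \eqref{eq:rho'} holds because $\rho(g,r',-)$ is strictly increasing with $\rho(g,r',\dmax(g,r'))<0$, and \eqref{eq:basiccond+} amounts to the bounds $2\leq\dmax(g,r')-1\leq g-1$ and $2\le d\le g-1$.

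The next step is to control \eqref{eq:cliff+} and \eqref{eq:caz+}, which depend only on $(g,r,d)$ and dictate the lower bounds on $d$ in each bullet. At $d=\dmax(g,r)$ both hold by Lemma~\ref{lemma:basicML2} (using Remark~\ref{rem:UD1+} for \eqref{eq:caz+}). Writing $d=\dmax(g,r)-j$ and inserting \eqref{dmaxdef}, I would turn \eqref{eq:cliff+} into an explicit inequality in $g,r,j$; its solution set is $j\leq 1$ for the first bullet and $j\leq r'-r+1$ for the last, and the excluded triple $(r,r',g)=(3,2,12)$ is precisely the place where $\dmax(g,r)-1$ already violates \eqref{eq:cliff+}. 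For $r\geq 3$ this holds uniformly in $g$, and an analogous estimate confirms \eqref{eq:caz+} there as well; consequently every finite genus list is confined to $r=2$, where \eqref{eq:cliff+} is the binding constraint and the ceiling in \eqref{dmaxdef} makes membership jump irregularly (for instance, with $(r,r')=(2,4)$ the full lowering $d=\dmax(g,r)-3$ satisfies \eqref{eq:cliff+} at $g=23$ but not at $g=24$). I would settle these by solving the integer inequality $\dmax(g,2)-j\geq\tfrac{g+5}{2}$ for each admissible $j$, reading off the genera that admit full versus partial lowering.

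The core is \eqref{eq:NC30+} with $d'=\dmax(g,r')-1$. For $r'<r$ I would substitute \eqref{dmaxdef} and rewrite the condition as $\ceil{\tfrac{rg}{r+1}}-\ceil{\tfrac{r'g}{r'+1}}>\tfrac{r}{r'}-2+j$; for $j\leq 1$ this follows from the inequality \eqref{eq:cl11} already proved in Lemma~\ref{lemma:basicML3}, and since $r'\geq 2$ forces $r\geq 3$ in the delicate case $r'=r-1$, the $r=2$ exceptions appearing there never intervene. For $r<r'$ the same substitution shows that the left-hand side of \eqref{eq:NC30+} equals its value at the maximal degrees---controlled by the lower inequality of Lemma~\ref{lemma:basicML3}---plus a correction $1-\tfrac{j}{r'-r+1}$, which is non-negative exactly when $j\leq r'-r+1$; this is precisely the defining bound of the last bullet.

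The main obstacle will not be any individual inequality but the combinatorial bookkeeping for $r=2$ with small $r'\in\{3,4,5\}$. There the amount by which $d$ may be lowered is governed by \eqref{eq:cliff+} through the ceiling function in \eqref{dmaxdef}, so the allowed $j$---and hence the split between genera admitting full lowering (the last bullet) and those admitting only lowering by $2$ or $3$ (the first three bullets)---jumps irregularly with $g$. Turning this irregularity into the clean finite lists of the statement, and checking that the handful of isolated genera (such as $g=23$ for $(2,4)$ or $g=17$ for $(2,3)$) really do satisfy all of \eqref{eq:basiccond+}--\eqref{eq:NC30+}, is the genuinely technical part; I would organize it by fixing $(r,r')$ and tabulating $\dmax(g,2)$ together with the corresponding thresholds for the finitely many small genera, with an asymptotic estimate dispatching the rest.
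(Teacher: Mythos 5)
Your proposal follows essentially the same route as the paper's own proof: invoke Theorem~\ref{thm:main_K3_0} with $d'=\dmax(g,r')-1$, use Lemma~\ref{lemma:basicML3} to see that \eqref{eq:NC30+} at the maximal degrees tolerates lowering $d$ by $1$ (upper inequality, via \eqref{eq:cl11}) respectively by $r'-r+1$ (lower inequality), dismiss \eqref{eq:caz+} via $g\geq r(r+1)$ and Remark~\ref{rem:UD1+}, and reduce the case lists to checking \eqref{eq:cliff+} — which is exactly the bookkeeping the paper leaves to the reader. Your verification is correct (including the source of the exclusion $(r,r',g)=(3,2,12)$ and the $g=23$ versus $g=24$ jump for $(r,r')=(2,4)$); the only slight blemish is an early sentence attributing the bound $j\leq 1$ in the $r'<r$ case to \eqref{eq:cliff+}, whereas, as you yourself later show via \eqref{eq:cl11}, it comes from the upper inequality in \eqref{eq:NC30+}.
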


\begin{proof}
	We have proved that\eqref{eq:NC30+} holds for $d=\dmax(g,r)$ and for $d'=\dmax(g,r')$. Setting instead $d'=\dmax(g,r')-1$, we therefore see that the upper inequality in \eqref{eq:NC30+} holds for  $d\geq \dmax(g,r')-1$, whereas the  lower one  hold for $d \geq \dmax(g,r)-(r'-r+1)$.   Since $g \geq r(r+1)$ by maximality of $\BN{g}{r}{\dmax(g,r)}$ (cf. \eqref{eq:EM3}), Remark~\ref{rem:UD1+} tells us that \eqref{eq:caz+} is redundant. Hence, it remains to check \eqref{eq:cliff+} by substituting for the values of $d$ and using $g \geq r(r+1)$ and $g \geq r'(r'+1)$. We leave the details to the reader.
\end{proof}

\begin{example}			
	In genus $14$, Proposition~\ref{prop:nonmax} gives the non-containments $\BN{14}{3}{12} \nsubseteq \BN{14}{2}{10}$ and $\BN{14}{2}{10} \nsubseteq \BN{14}{3}{12}$.
\end{example}

We also give an example where Theorem~\ref{thm:main_K3_0} does not suffice to prove a non-containment.

\begin{example}
	Let $(g,r,d)=(9,2,6)$ and $(g,r',d')=(9, 3, 8)$.  We  cannot apply Theorem~\ref{thm:numbers}, as on the lattice $\Lambda^2_{9,6}$, we have $(H-2L)^2=0$, hence  $(S,H)$ does not satisfy decomposition rigidity. 
	However, from \cite[Proposition 2.5]{AHL}, we obtain $\BN{9}{2}{6}\nsubseteq \BN{9}{3}{8}$. 
\end{example}

\end{document}